\numberwithin{equation}{section}
\newtheorem{theo}{Theorem}
\newtheorem{lemma}{Lemma}
\newtheorem{cor}{Corollary}
\newtheorem{rem}{Remark}
\newcommand{\R}{\mathbb{R}}	
\newcommand{\N}{\mathbb{N}}	
\newcommand{\eps}{\varepsilon}	
\newcommand{\pa}{\partial}		
\newcommand{\Div}{\textrm{div}\,}	
\newcommand{\na}{\nabla}		
\newcommand{\dd}{\mbox{d}}
\newcommand{\eye}{\mbox{Id}}
\newcommand{\uu}{{\bf u}}
\newcommand{\bk}[1]{\langle #1 \rangle}
\newcommand{\mm}{{\bf\mathfrak{m}}}
\newcommand{\TT}{\mathcal{T}}
\newcommand{\meas}[1]{\mbox{meas}\left\{ #1 \right\}}
\title[Fuzzy Landau equation. Existence and uniqueness of global smooth solutions]{The fuzzy Landau equation: global well-posedness and Fisher information}
\author{Maria Pia Gualdani}
\email{gualdani@math.utexas.edu}
\author{Nestor Guillen}
\email{nestor@txstate.edu}
\author{Nata\v sa Pavlovi\' c}
\email{natasa@math.utexas.edu}
\author{Maja Taskovi\' c}
\email{maja.taskovic@emory.edu}
\author{Nicola Zamponi}
\email{nicola.zamponi@alumni.uni-ulm.de}
\thanks{MPG is partially supported by NSF grant DMS-2019335. NG is partially supported by NSF grant DMS-214423. NP is partially supported by the NSF grants  DMS-1840314, DMS-2009549 and DMS-2052789. MT is partially supported the NSF grant DMS-2206187. The first author would like to thank Luis Silvestre for the  fruitful discussion.}
\date{\today}
\begin{document}



\begin{abstract} We study a fuzzy variant of the inhomogeneous Landau equation and establish global-in-time existence and uniqueness of smooth solutions for moderately soft potentials. The spatial delocalization introduced in the collision operator not only enhances regularity and prevents singularity formation, but also reveals additional structural properties of the model. In particular, we show that several forms of the Fisher information decay monotonically or remain uniformly bounded in time.

\end{abstract}

\maketitle

\tableofcontents

\markboth{Fuzzy Landau equation. Existence and uniqueness of global smooth solutions}{M. Gualdani, N. Guillen, N. Pavlovi\' c, M. Taskovi\' c, and N. Zamponi}

\section{Introduction}
Existence of global smooth solutions is a challenging open problem in the analysis of kinetic equations. For the inhomogeneous Boltzmann and Landau equations, a complete Cauchy theory is known only for  renormalized solutions, for solutions near equilibrium, and for spatially homogeneous solutions. Understanding whether smooth solutions can develop singularities in finite time is the key motivation of this work. In this paper we take a different point of view and consider a {\em{fuzzy}} version of the inhomogeneous Landau equation
\begin{align}\label{1}
\pa_t f + v\cdot \nabla_x f = \Div_v\int_{\R^3}\int_{\R^3}N(v-v^*)\kappa(x-x^*)(f^*\nabla f - f\nabla f^*)\dd x^*\dd v^*, 
\end{align}
for some initial condition $f^{in}(x,v)$ with $ (x,v) \in \R^3 \times \R^3$. We use the notation $f = f(x,v)$ and $f^* = f(x^*,v^*)$. 
The kernel $N$ is the classical Landau kernel
\begin{align*}
N(w) := |w|^{\gamma+2}\Pi(w),\qquad \Pi(w) := \eye - \frac{w\otimes w}{|w|^2}, \qquad -3\leq\gamma\leq 1.
\end{align*}
Equation (\ref{1}) models  interactions of particles at state $(x,v)$ with particles at state $(x^*,v^*)$. The range of spatial interaction is encoded in $\kappa$. Like classical inhomogeneous equations, solutions to (\ref{1}) formally conserve mass, momentum and energy along the time evolution, i.e. for any $i \in {1,2,3}$
\begin{align*}
\int \int f
\begin{pmatrix}
1 \\
v_i\\
|v|^2
\end{pmatrix} 
\;dvdx =
\int \int f^{in}
\begin{pmatrix}
1 \\
v_i\\
|v|^2
\end{pmatrix} 
\;dvdx ,
\end{align*}
and satisfy the $H$-Boltzmann theorem 
$$
\partial_t \int \int f \log f\;dvdx \le 0. 
$$


The distinguishing feature of \eqref{1} is that collisions are \emph{delocalized}. That is, in contrast to traditional kinetic models , ``collisions'' occur between particles even if they are far apart: particles at  locations $x$ and $x^*$ may undergo a ``collision'' as long as $k(x-x^*)>0$.  The introduction of fuzziness expands the range of possible interactions and provides a framework for modeling collisions that obey uncertain or imprecise laws. Many authors have been drawn to these delocalized kinetic models, seeing them as \emph{mollifications} of traditional kinetic equations (replacing the kernel $k$ in \eqref{1} with a Dirac delta $\delta(x-x^*)$ yields the classical inhomogeneous  Landau equation); this is part of the motivation in works of Morgenstern \cite[Section 5]{Morg55} and Povzner \cite{Povz62} (english translation in \cite{Pov1965}) where such mollifications are considered specifically for the Boltzmann equation. Some decades after the  works of Morgenstern and Povzner, Cercignani \cite{Cer1983} showed such ``mollified equations'' describe the evolution of the Grad limit for a model of ``soft spheres'' where spheres are allowed to overlap each other.  Further works with delocalized collisions are considered in \cite{Povz62, Arkeryd86, Arkeryd90}, including the mollified Boltzmann equation, and the Enskog equation for dense gases with non-negligible particle sizes.

We are not aware of physical models where the fuzzy Landau equation \eqref{1} arises, and we are not clear on how grazing and delocalized collisions might arise in modeling. Our interest in the fuzzy Landau equation \eqref{1} here is first from mathematical analysis, and second with the hope that studying \eqref{1} helps down the line to understand the space inhomogeneous regimes for the Landau and Boltzmann equation, as well as the fuzzy/mollified Boltzmann models described above. 

To the authors' best knowledge,  the fuzzy Landau equation \eqref{1} had not appeared in the literature until very recently: in \cite{Duong_Li_1}, Duong and He developed a variational framework for \eqref{1}, following up on work for the fuzzy Boltzmann equation by Erbar and He \cite{Erbar24,Erbar25}. Recently, Duong and He showed in  \cite{Duong_Li_2} existence of $H$ solutions for potentials $\gamma \in (-\min\{d,4\},1]$ for any $d\ge 2$, for kernels $\kappa \sim C$ and $\kappa \sim e^{-\langle x \rangle}$.  

We would like to highlight a point raised in \cite{Erbar24}, namely that the ``fuzzy'' / ``delocalized'' setting is in some aspects close to the homogeneous regime. In our present work we find that from the perspective of regularity theory, fuzzy kinetic equations are somewhere between the space homogeneous and space inhomogeneous regime -- all the more so as the kernel $k$ we consider here is positive everywhere, which for instance prevents any of the difficulties associated with vacuum regions. Thanks to this we will show that {\em{spatial delocalization of collisions prevents singularities}} and in some special cases facilitates the finding of {\em{additional Lyapunov functionals}}, concretely various forms of the Fisher information. 

\subsection*{Main results} The first main result concerns global well-posedness of smooth solutions to (\ref{1}) for general initial data when $\gamma \in (-2,0]$.  We will consider a smooth spatial kernel $\kappa$  of the form 
\begin{align}\label{kappa}
\kappa_1\bk{x}^{-\lambda}\leq \kappa(x)\leq \kappa_2\bk{x}^{-\lambda}~~\forall x\in\R^3,
\end{align}
for any $\lambda \ge 0$ and $\kappa_1, \kappa_2$ positive constants.  Precise statements can be found in Theorem \ref{Thm1}, whose proof is split between Theorem \ref{thm.ex}  and Theorem \ref{thm.reg}. 
 The second main result (see Theorem \ref{Theo_Fisher_xv}) shows that if $ \kappa = 1$ and $\gamma \in [-3,0]$, the Fisher information
 $$
 \int \frac{|\nabla_x f|^2}{f}   \;dxdv,
 $$
 is {\em{monotone decreasing}}, and the functional
 $$
 \int \frac{|\nabla_x f|^2}{f}  + \frac{|\nabla_v f|^2}{f}  \;dxdv,
 $$
 is bounded.    In the last section of the paper we briefly recall the variational formulation of  (\ref{1}) in the GENERIC (General Equation for the nonequilibrium Reversible-Irreversible Coupling) framework. This formulation was first proposed in \cite{Duong_Li_1}. 

Lastly, we briefly recall some important work on the space inhomogeneous Landau equation with a focus on the topics under consideration in this paper, in particular global existence and regularity. Existence of global in time solutions of the Cauchy problem is based on the work of Di Perna and Lions \cite{Lio1994}, which deals with renormalized solutions. If initial data are near equilibrium, existence of  global smooth solutions has been proven, among other works, in \cite{DLSS2020, Guo02, KimGuoHwang2016}. If initial data only depend on the velocity variable, blow up has been ruled out first in work by one of the authors and Silvestre \cite{GS24}, and subsequent works have shown instant regularization and global smoothness starting for non-smooth initial data with varying assumptions, including work by one of the authors with Golding and Loher \cite{GoGuLo24} and with Desvillettes and Loher \cite{DGL24}, as well as works by Ji \cite{Ji24,Ji24a}, and a recent preprint of He, Ji, and Luo \cite{HJJ2025}. For the space inhomogeneous regime we mention work of Cameron, Silvestre, and Snelson \cite{CamSilSne2017}  and collaborations of Henderson, Snelson, Solomon, and Tarfulea \cite{HendersonSnelson2017,HendersonSnelsonTarfulea2017,SnelSol23}, these works have established regularity estimates for larger initial data conditioned only on certain lower regularity structural bounds. We highlight further the works of Silvestre and Imbert \cite{Imb_Silv2017,IS22} where $C^\infty$ regularity for the Boltzmann equation without cut off is shown to hold as long as the hydrodynamic quantities 
\begin{align}
\rho(x,t) := \int f \;dv ,\quad 
E(x,t) := \int f |v|^2 \;dv ,\quad 
H(x,t) := \int  f \log f \;dv ,
\end{align}
stay uniformly bounded along with $\rho^{-1}$. Essentially, the results in \cite{HendersonSnelson2017,HendersonSnelsonTarfulea2017,SnelSol23, Imb_Silv2017,IS22} show that if the density $f$ develops singularities in finite time, then those singularities will be reflected up at the level of the macroscopic quantities $\rho,E$, and $H$. These conditional regularity results do not rule out formation of such singularities, and in fact establishing whether the inhomogeneous Landau or Boltzmann equation develops them remains an important open question. Implosions singularities have been established for compressible Euler in recent works of Merle, Raphael, Rodnianski, and Szetel \cite{MRRS22, MRRS22-1}, a result that is somewhat suggestive in light of the hydrodynamic limits from Boltzmann to compressible Euler. 

In this work we show that for solutions to  \eqref{1}, \eqref{kappa}, the quantities $\rho(x,t)$, $E(x,t)$ and $H(x,t)$ remain bounded for all times $t>0$. Although here we only consider moderately soft potentials, preliminary analysis suggests that our analysis also  applies to very soft potentials. A full analysis is left for future work. We also remark that our analysis could be naturally extended to the Boltzmann equation and the regularity assumptions on our initial data are definitely not sharp. We expect that our results from Section \ref{sec:fisher} also hold for the fuzzy Boltzmann equation, adapting the method developed in \cite{ISV24} by Imbert, Silvestre and Villani (see also \cite{Villani2025})  for the homogeneous Boltzmann equation. 

Assumption \eqref{kappa} could be relaxed to other types of kernel without major changes as long as the support of $\kappa$ is the whole $\mathbb{R}^3$.   The fact that $k>0$ everywhere is significant in that it eliminates any analytical challenges that might arise from the presence of vacuum regions. Indeed, since $k>0$ everywhere, the diffusive effects of the delocalized collisions are felt everywhere immediately and one expects vacuum regions to disappear for positive times. A very interesting and challenging question is whether smooth solutions exist when the kernel $\kappa$ is compactly supported.

\subsection{Notation} We define
\begin{align*}
	\bk{x} := \sqrt{1 + |x|^2}\quad\forall \; x  \in\R^3,
\end{align*}
$$B_R^{x,v}:= \{ (x,v)\in\R^6~:~|x|^2+|v|^2< R^2 \},$$
$$B_R^{x,v}(x_0,v_0):= \{ (x,v)\in\R^6~:~|x-x_0|^2+|v-v_0|^2< R^2 \}.$$ 
We denote with $[a,b]$ the commutator vector between vectors $a$ and $b$, whose $i$-component is
$$
[a,b]_i = \sum_j a_j \partial_j b_i - b_j \partial_j a_i. 
$$


\section{Global well posedness for $\gamma \in(-2,0]$}

We consider initial data $f^{in}:\R^3\times\R^3\to\R$ with the properties
\begin{align}\label{hp.ic}
f^{in}\geq 0\mbox{ in }\R^3,\quad f^{in}\in L^1\cap L^\infty(\R^3\times\R^3),\quad 
\int_{\R^6}(|x|^{\mm}+|v|^{\mm})f^{in}\dd x\dd v<\infty,
\end{align}
with $\mm>\max\{12, \lambda + 2\}$. The total mass, momentum and energy
\begin{align}\label{moments}
	\rho_0 = \int_{\R^6}f\dd x\dd v,\quad \uu = \int_{\R^6}v f \dd x\dd v,
	\quad E = \frac{1}{2}\int_{\R^6}|v|^2 f \dd x\dd v,
\end{align}
are conserved along the solutions of \eqref{1}. 
We will prove the following theorem: 
\begin{theo}\label{Thm1}
Let $\gamma \in(-2,0]$ and $s$ a positive integer with $s\geq 2$. Let the kernel $\kappa \in C^s(\R^3)$ satisfy (\ref{kappa}) and 
\begin{align}\label{Hkreg.kappa}
	|D_x^\beta\kappa|\leq C_\beta \kappa ~~\mbox{in }\R^3 ,
\end{align}
for some constants $C_\beta >0$ and multi-index $\beta\in\N_0^d$ with $|\beta|\leq s$. 
Furthermore, assume that the initial datum satisfies \eqref{hp.ic} and
\begin{align*}
\bk{v}^{s-|\alpha|-|\beta|} D_v^{\alpha} D^{\beta}_x f^{in}\in L^2(\R^3\times\R^3)\quad\forall \alpha, \beta\in \N_0^d~~ \mbox{such that} ~ |\alpha|+|\beta|\leq s .
\end{align*}
Then there exists a {{unique}} strong solution to (\ref{1}) with initial data $f^{in}$.  In particular 
\begin{align*}
	\bk{v}^{s-|\alpha|-|\beta|} D_v^{\alpha} D^{\beta}_x f\in L^\infty(0,T; L^2(\R^3\times\R^3)),\quad\\
	 \bk{v}^{s+\frac{\gamma}{2}-|\alpha|-|\beta|} D_v^{\alpha} D^{\beta}_x \nabla_{v}f\in 
	L^2(0,T; L^2(\R^3\times\R^3)),
\end{align*}
for every choice of $\alpha, \beta\in \N_0^d$ such that 
$|\alpha|+|\beta|\leq s$.

\end{theo}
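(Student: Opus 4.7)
The plan is to construct $f$ via a regularized approximation, propagate conservation laws, moments, and weighted $L^2$-Sobolev norms, then pass to the limit and prove uniqueness by a stability estimate for the difference of two solutions. The starting point is to expand the outer divergence in \eqref{1} and rewrite the equation in non-divergence form
\begin{align*}
\pa_t f + v\cdot \nabla_x f = \bar A[f] : D_v^2 f + \bar B[f]\cdot\nabla_v f + \bar c[f]\,f,
\end{align*}
where the coefficients are nonlocal in both $x$ and $v$:
\begin{align*}
\bar A[f](x,v) = \int_{\R^6} N(v-v^*)\kappa(x-x^*) f(x^*,v^*)\,\dd x^*\dd v^*,
\end{align*}
with $\bar B[f]$ and $\bar c[f]$ given by analogous integrals against $\Div N$ and $\Div\Div N$. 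These coefficients are $x$-convolutions of $\kappa$ against $v$-convolutions of $f$ with $N$ (or its distributional derivatives). Consequently, the assumption \eqref{Hkreg.kappa} yields the pointwise estimate $|D_x^\beta \bar A[f]|\leq C_\beta \bar A[f]$, and likewise for $\bar B[f]$ and $\bar c[f]$. Moreover, $\bar A[f]$ inherits uniform ellipticity on bounded velocity balls from a positive lower bound on the local mass $\int \kappa(x-x^*)f(x^*,v^*)\,\dd x^*\dd v^*$, which follows from \eqref{kappa} and mass conservation together with a Desvillettes--Villani-type lower bound on the Landau matrix.

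For the existence step, I would first regularize, replacing $N$ by a mollification $N_\eps=N\ast\eta_\eps$ that removes the singularity at $v=v^*$, and invoke a linear parabolic fixed-point argument on $[0,T]$ to produce smooth $f_\eps$. For these approximate solutions I would then propagate, uniformly in $\eps$: (i) mass, momentum, energy, and the $H$-theorem; (ii) the $v$-moments $\int\bk{v}^\mm f_\eps\,\dd x\dd v$ (closed thanks to $\gamma>-2$) and the $x$-moments $\int\bk{x}^\mm f_\eps\,\dd x\dd v$ (closed via the transport term together with the $v$-moment bound, and using the decay rate $\lambda$ of $\kappa$ in \eqref{kappa}); (iii) an $L^\infty$ bound by a De~Giorgi / maximum principle argument leveraging the lower ellipticity of $\bar A[f_\eps]$ together with the boundedness of $\bar B[f_\eps]$ and $\bar c[f_\eps]$. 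Step (iii) is also the principal input for the uniqueness estimate.

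Higher-order regularity is then obtained by induction on $k = |\alpha|+|\beta|$, running from $0$ to $s$. Applying $D_v^\alpha D_x^\beta$ to the equation and pairing with $\bk{v}^{2(s-k)} D_v^\alpha D_x^\beta f$, the terms I expect to handle split into three classes: the commutator $[v\cdot\nabla_x,D_v^\alpha]$, which produces lower-order $v$-derivatives together with one extra $x$-derivative, closing by the inductive hypothesis at level $k-1$ with a heavier weight; terms where $D_v^\alpha D_x^\beta$ falls on the coefficients $\bar A[f]$, $\bar B[f]$, $\bar c[f]$, which via the convolution structure reduce to expressions of the form $\bar A[D_x^{\beta'}f]:(\cdots)$ and are controlled using $|D_x^\beta\kappa|\leq C_\beta\kappa$ together with Young's inequality in $x^*$; and the standard weight/dissipation balancing for the Landau-type elliptic term, which produces exactly the $\bk{v}^{\gamma/2}$-weighted gradient control claimed in Theorem~\ref{Thm1}. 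Passing $\eps\to 0$ is then routine, using Aubin--Lions compactness in $C([0,T];L^2_{\mathrm{loc}})$ together with the uniform estimates above, and identifying the nonlinear coefficients by the stability of the convolution.

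Uniqueness follows from a weighted $L^2$ stability estimate: for two solutions $f_1,f_2$ with the regularity just obtained, $g=f_1-f_2$ satisfies a linear equation with the $f_1$-dependent coefficients plus source terms of the form $\bar A[g]:D_v^2 f_2$, $\bar c[g] f_2$, etc., and pairing with $g\bk{v}^{2p}$ for suitable $p$ yields a Gr\"onwall inequality whose right-hand side is controlled by the $L^\infty$ and weighted Sobolev bounds from the existence step. The main obstacle I anticipate is the hypoelliptic mismatch in the induction: the commutator of transport with $D_v^\alpha$ introduces $x$-derivatives that the elliptic dissipation (acting only in $v$) cannot absorb directly, so the choice of weight $\bk{v}^{s-k}$ and the ordering of induction must be laid out so that the $x$- and $v$-derivative budgets always balance. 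Overcoming this relies crucially on the mollification-in-$x$ coming from the convolution with $\kappa$ and on the pointwise bound $|D_x^\beta\kappa|\leq C_\beta\kappa$, which is precisely what distinguishes the fuzzy equation from the classical inhomogeneous Landau equation, where an analogous closure of higher-order weighted $L^2$ estimates is not available.
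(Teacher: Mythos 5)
Your high-level architecture matches the paper's almost exactly: regularize, derive a priori estimates (moments, ellipticity, $L^\infty$, weighted $H^s$), pass to the limit, and close the induction using the key bound $|D_x^\beta\kappa|\leq C_\beta\kappa$ to avoid derivative loss in $x$. The weight bookkeeping $\bk{v}^{s-|\alpha|-|\beta|}$ and the commutator mechanism ($[v\cdot\nabla_x,D_v^\alpha]$ trading a $v$-derivative for an $x$-derivative) are the same as in the paper's Section on $H^s$ regularity. However, there is one place where your sketch glosses over the central technical work, and I would not accept it as is.

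The ellipticity estimate is the crux of the proof and your treatment of it is significantly under-developed. You assert that ``$\bar A[f]$ inherits uniform ellipticity on bounded velocity balls from a positive lower bound on the local mass \ldots which follows from \eqref{kappa} and mass conservation together with a Desvillettes--Villani-type lower bound.'' This is incomplete in two respects. First, mass alone does not give the Desvillettes--Villani lower bound: one must rule out concentration on a line in velocity, which requires control of the entropy (and second moment) of the relevant density. The relevant density is not $f$ itself but the $x$-averaged function $H(x,v)=\int\kappa(x-y)f(y,v)\dd y$, and establishing that $\int H\log H\,\dd v$ is bounded \emph{uniformly in $x$} is nontrivial (the paper does it via a Jensen inequality argument with respect to $\rho(y)/\rho_0$). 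Second, your claim of ellipticity ``on bounded velocity balls'' is weaker than what the $H^s$ induction actually needs: you must capture the full weighted bound $\xi\cdot A[f]\xi\geq\alpha\bk{x}^{-\lambda}\bk{v}^{\gamma}|\xi|^2$ valid for \emph{all} $(x,v)$, since the $\bk{v}^\gamma$ factor is exactly what is absorbed by the weight choice $\bk{v}^{s+\gamma/2-|\alpha|-|\beta|}$ in the dissipation. Without the quantitative $v$-decay, the weight balancing in your inductive step does not close. This is the portion of the proof you would have to supply in detail before the rest of the argument stands.

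Two further remarks. Your choice of De~Giorgi/maximum principle for the $L^\infty$ bound is a genuine alternative to the paper's Moser iteration (test with $f^{p-1}$, send $p\to\infty$ using that $C(p,T)/p$ stays bounded); both are feasible here because the coefficients $\Delta a[f]$, $\nabla_v a[f]$ are controllable via the moment bounds. Your uniqueness sketch (stability in weighted $L^2$ for $g=f_1-f_2$) is reasonable but you should verify it actually closes at the threshold $s=2$: the source term $\bar A[g]:D_v^2 f_2$ requires $D_v^2 f_2$ in a space dual to wherever you put $g\bk{v}^{2p}$, and the regularity afforded by $s=2$ only gives $D_v^2 f_2\in L^2$ with a weight, so a direct $L^\infty$ bound on $D_v^2 f_2$ is not available; you would need either an interpolation/duality argument or to run the stability estimate at a higher derivative level and use the constructed smoothness more carefully.
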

We start with some a-priori estimates. Besides the bound on the $x$ and $v$ moments of $f$, the crucial estimate for global well-posedness is a lower bound on the diffusion. 
For 
\begin{align*}
A[f](x,v) &:= \int_{\R^6}N(v-v^*)\kappa(x-x^*)f(x^*,v^*)\dd x^*\dd v^*,
\end{align*}
the following bounds hold:
\begin{align}\label{aA.lb}
\exists\alpha>0:\quad 
\xi\cdot A[f]\xi \geq \alpha\bk{x}^{-\lambda}\bk{v}^{\gamma}|\xi|^2\qquad\forall\xi\in\R^3,\\
\label{aA.ub}
|a[f]|\leq C\bk{v}^{\gamma+2},\quad |A[f]|\leq C\bk{v}^{\gamma+2}.
\end{align}
While \eqref{aA.ub} is easily shown via elementary computations (because $\gamma+2>0$), estimate \eqref{aA.lb} is nontrivial and it is shown in Lemma~\ref{lem.aA.lb}.



\subsection{Preliminary estimates}\label{ss.estim}
In this section we shall establish a number of basic estimates for $f(x,v,t)$, with the most important being the lower bound for the diffusion matrix $A[f]$ in Lemma \ref{lem.aA.lb}. 
First, a lemma estimating the moments of $f$.
\begin{lemma}\label{ss.moments}
We have
\begin{align}\label{mom.high}
	\sup_{t\in [0,T]}\int_{\R^6}(|x|^{\mm}+|v|^{\mm})f(x,v,t)\dd x\dd v < \infty\quad\forall \; T>0,
\end{align}
where the coefficient $\mm$ is as in \eqref{hp.ic}. 
\end{lemma}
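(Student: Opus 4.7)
The plan is to derive coupled differential inequalities for the $x$- and $v$-moments
$$M_x(t) := \int_{\R^6} |x|^{\mm} f(x,v,t)\,\dd x\,\dd v, \qquad M_v(t) := \int_{\R^6} |v|^{\mm} f(x,v,t)\,\dd x\,\dd v,$$
and close them by Gronwall's inequality; both quantities are finite at $t=0$ by \eqref{hp.ic}.

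For $M_x$, I test \eqref{1} against $|x|^{\mm}$. Since $\nabla_v|x|^{\mm}=0$, integration by parts in $v$ annihilates the collision term (the flux has enough decay at infinity in $v$ by \eqref{aA.ub} and the regularity of $f$). Integration by parts in $x$ in the transport term gives
$$\frac{\dd}{\dd t} M_x(t) = \mm \int_{\R^6} |x|^{\mm-2} (x\cdot v)\, f\,\dd x\,\dd v \leq C\bigl(M_x(t) + M_v(t)\bigr),$$
where the last inequality follows from Young's inequality $|x|^{\mm-1}|v|\leq \frac{\mm-1}{\mm}|x|^{\mm} + \frac{1}{\mm}|v|^{\mm}$.

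For $M_v$, testing against $|v|^{\mm}$ annihilates the transport term (since $\nabla_x|v|^{\mm}=0$). After integration by parts in $v$ and symmetrization of the collision integral under $(x,v)\leftrightarrow(x^*,v^*)$—which is permissible because both $N$ and $\kappa$ are even—one obtains
$$\frac{\dd}{\dd t}M_v(t)=-\frac{\mm}{2}\int_{\R^{12}} \kappa(x-x^*) \bigl(|v|^{\mm-2}v - |v^*|^{\mm-2}v^*\bigr)^T N(v-v^*) \bigl(f^*\nabla_v f - f\nabla_{v^*} f^*\bigr)\,\dd x\,\dd v\,\dd x^*\dd v^*.$$
Writing $f^*\nabla_v f - f\nabla_{v^*}f^* = (\nabla_v-\nabla_{v^*})(ff^*)$ and integrating by parts once more transfers all derivatives onto the smooth weight; the classical Landau moment identity then yields a pointwise bound on the integrand of the form
$$C\,\kappa(x-x^*)\bigl(|v|^{\mm-2}+|v^*|^{\mm-2}\bigr)|v-v^*|^{\gamma+2} f f^*.$$
Since $\gamma+2\in(0,2]$, we have $|v-v^*|^{\gamma+2}\leq C\bigl(\bk{v}^{\gamma+2}+\bk{v^*}^{\gamma+2}\bigr)$. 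The $(x^*,v^*)$-integration against $\kappa(x-x^*)f^*$ is controlled using $\kappa\leq \kappa_2$ together with the conserved mass and energy from \eqref{moments}; the remaining $v$-weight is at most $\bk{v}^{\mm+\gamma}$, which, since $\gamma\leq 0$, is bounded by $\bk{v}^{\mm}$. We conclude
$$\frac{\dd}{\dd t}M_v(t)\leq C\bigl(1+M_v(t)\bigr).$$

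Summing the two estimates and invoking Gronwall yields \eqref{mom.high} on $[0,T]$. The main obstacle is the $v$-moment: a direct estimate on the collision operator predicts growth like $|v|^{\mm+\gamma+2}$, which is supercritical. It is the Landau cancellation exposed by the symmetric weak form—ultimately rooted in the identity $\Pi(v-v^*)(v-v^*)=0$—that brings the effective weight down to $|v|^{\mm+\gamma}$. The hypothesis $\gamma\in(-2,0]$ enters twice: $\gamma+2>0$ makes \eqref{aA.ub} and the elementary bound on $|v-v^*|^{\gamma+2}$ usable, while $\gamma\leq 0$ gives the linear inequality needed to close via Gronwall.
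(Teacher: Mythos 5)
Your proposal is correct and follows essentially the same route as the paper: both test against $|x|^{\mm}$ to reduce $M_x$ to $M_v$, then against $|v|^{\mm}$, symmetrize the collision bilinear form, integrate by parts so all derivatives land on the smooth weight, and use the fact that the resulting $\Div_v\bigl[N(v-v^*)(|v|^{\mm-2}v-|v^*|^{\mm-2}v^*)\bigr]$ is bounded by $C|v|^{\mm-2}|v-v^*|^{\gamma+2}$ (the monotone/negative part discarded), before closing with the $\gamma+2\in(0,2]$ moment bound and Gronwall. The paper phrases the key pointwise bound via an explicit split of the divergence into $(\Div_v N)\cdot(\cdots)\leq 0$ plus a trace term, while you invoke the standard Landau moment identity and the null direction $N(v-v^*)(v-v^*)=0$; these are the same cancellation.
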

\begin{proof}
Let us employ $|x|^\mm$ as test function in \eqref{1}. It follows
\begin{align*}
	\frac{d}{dt}\int_{\R^6}|x|^\mm f(x,v,t)\dd x\dd v =
	\mm\int_{\R^6}|x|^{\mm-2}x\cdot v f(x,v,t)\dd x\dd v.
\end{align*}
Young's inequality yields
\begin{align*}
	\frac{d}{dt}\int_{\R^6}|x|^\mm f(x,v,t)\dd x\dd v\leq 
	(\mm-1)\int_{\R^6}|x|^\mm f(x,v,t)\dd x\dd v
	+\int_{\R^6}|v|^\mm f(x,v,t)\dd x\dd v,
\end{align*}
implying that $\int_{\R^6}|x|^\mm f(x,v,t)\dd x\dd v$ can be controlled once $\int_{\R^6}|v|^\mm f(x,v,t)\dd x\dd v$ is bounded. For this reason we turn our attention to 
\begin{align}\label{dt.mom}
	\frac{d}{dt}\int_{\R^6}|v|^\mm f(x,v,t)\dd x\dd v = 
	\int_{\R^6}|v|^\mm Q(f,f)\dd x\dd v.
\end{align}
Let us compute the right-hand side of the above identity:
\begin{align*}
	&\int_{\R^6}\frac{|v|^\mm}{\mm} Q(f,f)\dd x\dd v \\
	&= 
	-\int_{\R^{12}}\kappa(x-y)
	|v|^{\mm-2}v\cdot N(v-w)(f(y,w)\nabla_{v}f(x,v) - f(x,v)\nabla_{w}f(y,w))\dd x\dd y\dd v\dd w. 
\end{align*}
A simple exchange of variable and an integration by parts yield
\begin{align*}
	&\int_{\R^6}\frac{|v|^\mm}{\mm} Q(f,f)\dd x\dd v \\
	&= 
	-\int_{\R^{12}}\kappa(x-y)
	(|v|^{\mm-2}v - |w|^{\mm-2}w)\cdot N(v-w)f(y,w)\nabla_{v}f(x,v)\dd x\dd y\dd v\dd w\\
	&= 
	\int_{\R^{12}}\kappa(x-y)\Div_v\left(
	N(v-w)(|v|^{\mm-2}v - |w|^{\mm-2}w)
	\right)
	f(y,w)f(x,v)\dd x\dd y\dd v\dd w.
\end{align*}
We focus our attention on the term
\begin{align*}
	\Div_v &\left(N(v-w)(|v|^{\mm-2}v - |w|^{\mm-2}w)\right)\\
	&=(\Div_v N(v-w))\cdot (|v|^{\mm-2}v - |w|^{\mm-2}w) + 
	\mbox{trace}(N(v-w)\nabla_{v}(|v|^{\mm-2}v)).
\end{align*}
It holds
\begin{align*}
	\Div_v N(v-w) = -2|v-w|^{\gamma}(v-w),\quad 
	\nabla_{v}(|v|^{\mm-2}v) = 
	|v|^{\mm-2}\left( \eye + (\mm-2)\frac{v\otimes v}{|v|^2} \right).
\end{align*}
Clearly $(\Div_v N(v-w))\cdot (|v|^{\mm-2}v - |w|^{\mm-2}w)\leq 0$, therefore
\begin{align*}
	\Div_v &\left(N(v-w)(|v|^{\mm-2}v - |w|^{\mm-2}w)\right)\\
	&\leq \mbox{trace}(N(v-w)\nabla_{v}(|v|^{\mm-2}v))\\
	&= |v|^{\mm-2}\mbox{trace}(N(v-w)) + |v|^{\mm-2}(\mm-2)\frac{v}{|v|}\cdot N(v-w)\frac{v}{|v|}.
\end{align*}
Given that $\frac{v}{|v|}$ has modulus 1, we infer
$\frac{v}{|v|}\cdot N(v-w)\frac{v}{|v|}\leq \mbox{trace}(N(v-w))$, hence
\begin{align*}
	\Div_v \left(N(v-w)(|v|^{\mm-2}v - |w|^{\mm-2}w)\right) &\leq  
	(\mm-1)|v|^{\mm-2}\mbox{trace}(N(v-w))\\
	&= 2(\mm-1)|v|^{\mm-2}|v-w|^{\gamma+2}.
\end{align*}
We obtain
\begin{align*}
	&\int_{\R^6}\frac{|v|^\mm}{\mm} Q(f,f)\dd x\dd v 
	\leq 2(\mm-1)\int_{\R^{12}}\kappa(x-y)|v|^{\mm-2}|v-w|^{\gamma+2}
	f(y,w)f(x,v)\dd x\dd y\dd v\dd w	.
\end{align*}
Assumption \eqref{kappa} yields
\begin{align*}
	&\int_{\R^6}|v|^\mm Q(f,f)\dd x\dd v 
	\leq 2\mm(\mm-1)\kappa_2
	\int_{\R^6} |v|^{\mm-2}f(x,v)\left[
	\int_{\R^6}|v-w|^{\gamma+2}
	f(y,w)\dd y\dd w
	\right]\dd x\dd v.
\end{align*}
Since $0<\gamma+2\leq 2$  the integral in square brackets is bounded, and therefore
\begin{align*}
	&\int_{\R^6}|v|^\mm Q(f,f)\dd x\dd v \leq C\int_{\R^6}|v|^{\mm-2}f(x,v,t)\dd x\dd v.
\end{align*}
Going back to \eqref{dt.mom},  Gronwall's inequality  implies the boundedness of $\int_{\R^6}|v|^{\mm}f(x,v,t)\dd x\dd v$. Since we have already shown that $\int_{\R^6}|x|^{\mm}f(x,v,t)\dd x\dd v$ is bounded if 
$\int_{\R^6}|v|^{\mm}f(x,v,t)\dd x\dd v$ is, we conclude that \eqref{mom.high} holds.
\end{proof}

The lower bound on the diffusion is stated next. We emphasize the assumptions on $k$ are crucial here, they make it possible to guarantee a non trivial diffusivity in the $v$ variable \emph{for any} $x$ as long as $f$ has positive mass \emph{somewhere}. Put differently, as collisions can happen between far away particles, the diffusive effect of collisions is felt everywhere instantaneously, even in vacuum regions. This is an important place where the delocalized, inhomogeneous equation is more similar to the homogeneous regime. 
 
\begin{lemma}\label{lem.aA.lb}
	There exists $\alpha>0$ such that 
	$$\xi\cdot A[f]\xi \geq \alpha\bk{x}^{-\lambda}\bk{v}^{\gamma}|\xi|^2\qquad\forall\xi\in\R^3 . $$
\end{lemma}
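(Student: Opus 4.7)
My plan is to separate variables inside $\kappa$ and thereby reduce the estimate to the classical coercivity lower bound for the homogeneous Landau diffusion matrix. Using the elementary inequality $\bk{x-x^*}^2\le 2\bk{x}^2\bk{x^*}^2$ together with assumption \eqref{kappa} produces a pointwise lower bound $\kappa(x-x^*)\ge c_0\bk{x}^{-\lambda}\bk{x^*}^{-\lambda}$. Since $N(w)$ is positive semidefinite (and hence $\xi\cdot N(w)\xi\ge 0$), this factors $\bk{x}^{-\lambda}$ out cleanly and gives
\begin{equation*}
\xi\cdot A[f](x,v)\xi \ge c_0\,\bk{x}^{-\lambda}\,\xi\cdot a[g_x](v)\xi,
\end{equation*}
where $a[g](v):=\int_{\R^3} N(v-v^*)g(v^*)\dd v^*$ is the homogeneous Landau matrix and $g_x(v^*):=\int_{\R^3} \bk{x^*}^{-\lambda}f(x^*,v^*)\dd x^*$ is a weighted velocity-marginal of $f$. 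The proof then reduces to showing $\xi\cdot a[g_x](v)\xi\ge c\bk{v}^\gamma|\xi|^2$ with $c>0$ uniform in $x$, which is precisely the classical Landau coercivity estimate applied to $g_x$.

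\textbf{Uniform bounds on $g_x$.} The classical coercivity holds provided $g_x$ satisfies, uniformly in $x$, a positive lower bound on mass, an upper bound on energy, and (for $\gamma<0$) an upper bound on entropy. The upper bounds $\int g_x\dd v^*\le \rho_0$ and $\int|v^*|^2 g_x\dd v^*\le 2E$ are immediate from conservation of mass and energy. For the lower bound on mass, Lemma~\ref{ss.moments} combined with a Chebyshev argument guarantees that a definite fraction of $f$ sits in a ball $|x^*|\le R$ for $R$ large enough (depending only on $\rho_0$ and $\int|x|^\mm f\dd x\dd v$), and restricting to that ball yields $\int g_x\dd v^*\ge \bk{R}^{-\lambda}\rho_0/2=:m_1>0$ uniformly in $x$. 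For the entropy, when $\lambda>3$ so that $\bk{x^*}^{-\lambda}$ is integrable, I would apply Jensen's inequality to the convex function $s\mapsto s\log s$ against the probability measure $\dd\mu(x^*)\propto\bk{x^*}^{-\lambda}\dd x^*$; this reduces $\int g_x\log g_x\dd v^*$ to the globally bounded quantity $\int_{\R^6} f\log f\,\bk{x^*}^{-\lambda}\dd x^*\dd v^*$, where the $H$-theorem $\int\int f\log f\le \int\int f^{in}\log f^{in}$ combined with the moment bounds of Lemma~\ref{ss.moments} controls the negative part. When $\lambda\le 3$, I would first truncate the weight to $|x^*|\le R$ throughout (legitimate because only a lower bound on $g_x$ is ever needed) and then apply the same Jensen argument to the now-integrable weight.

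\textbf{Main obstacle and conclusion.} With uniform mass, energy, and entropy bounds on $g_x$ in hand, invoking the standard Landau coercivity lemma (in the spirit of Desvillettes--Villani, Guo, or Silvestre) yields $\xi\cdot a[g_x](v)\xi\ge c\bk{v}^\gamma|\xi|^2$, and combining this with the separation of variables delivers the stated bound with $\alpha = c_0 c$. The technical heart of the proof, and the step I expect to be the main obstacle, is securing the uniform-in-$x$ entropy bound on $g_x$: one is trying to infer a velocity-only entropy estimate from a joint $(x,v)$-entropy estimate by integrating against a weight $\bk{x^*}^{-\lambda}$ that need not be integrable, and the truncation argument above is what makes this rigorous for all $\lambda\ge 0$.
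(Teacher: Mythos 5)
Your proof is correct but takes a genuinely different route from the paper's. The paper introduces $H(x,v) := \int_{\R^3}\kappa(x-y)f(y,v)\dd y$, establishes $x$-uniform mass, energy, and entropy bounds for the family $H(x,\cdot)$ (Lemma~\ref{lem.aA.2}), deduces a positive-measure set in $v$ on which $H(x,\cdot)\geq\tilde\ell\bk{x}^{-\lambda}$ (Lemma~\ref{lem.aA.3}), and then re-derives the coercivity estimate from scratch via the geometric cylinder--ball intersection argument. You instead exploit the submultiplicativity $\bk{x-x^*}^{-\lambda}\geq 2^{-\lambda/2}\bk{x}^{-\lambda}\bk{x^*}^{-\lambda}$ (valid under \eqref{kappa}, and using that $\xi\cdot N(w)\xi\geq 0$) to pull the $\bk{x}^{-\lambda}$ factor out of the quadratic form at the very start, reducing the claim to the classical homogeneous Landau coercivity estimate applied to the single weighted velocity marginal $g(v^*)=\int\bk{x^*}^{-\lambda}f(x^*,v^*)\dd x^*$. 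Because $g$ does not depend on $x$, the uniformity in $x$ is automatic, whereas the paper has to carefully propagate the $\bk{x}^{-\lambda}$ scaling through each of Lemmas~\ref{lem.aA.2}--\ref{lem.aA.3}. Your mass, energy, and Jensen-based entropy bounds on $g$ (with the truncation device for $\lambda\leq 3$) mirror the ingredients in Lemma~\ref{lem.aA.2} --- the paper also uses Jensen with $\Phi(s)=s\log s$, just against a different probability measure --- but your reduction to a single $g$ and a black-boxed coercivity lemma makes the argument shorter and more modular, at the cost of citing an external result rather than giving a self-contained proof. Both approaches produce a constant $\alpha$ that depends on the time horizon $T$ through the non-conserved $x$-moment of $f$ controlled in Lemma~\ref{ss.moments}.
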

The proof of Lemma \ref{lem.aA.lb} resembles the one of the lower bound for $A[f]$ in the homogeneous case. We will, however, not work with $f$, but with the average of $f$ in the physical space. The first step to prove  Lemma \ref{lem.aA.lb} is the existence of a set in $\mathbb{R}^3 \times  \mathbb{R}^3$ with non-trivial measure in which $f$ is uniformly bounded from below. 
\begin{lemma}\label{lem.aA.1}
	If $\rho_0 = \int_{\R^6}f(x,v)\dd x\dd v$, 
	$c_2 = \int_{\R^6}(|x|^2 + |v|^2)f(x,v)\dd x\dd v$,
	$\int_{\R^6}f\log f\dd x\dd v\leq h_0$, then there exist constants $\ell$, $R$, $\TT$ depending on $c_2$, $\rho_0$, $h_0$ such that
	$$
	\meas{\{f\geq\ell\}\cap B_{R}^{x,v}}\geq \frac{5\rho_0}{8\TT}.
	$$
\end{lemma}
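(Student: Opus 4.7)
The plan is a classical three-level truncation. Decompose the total mass as
\[
\rho_0 = \int_{(B_R^{x,v})^c} f\,\dd x\,\dd v + \int_{B_R^{x,v}\cap\{f<\ell\}} f\,\dd x\,\dd v + \int_{B_R^{x,v}\cap\{f>\TT\}} f\,\dd x\,\dd v + \int_{B_R^{x,v}\cap\{\ell\leq f\leq \TT\}} f\,\dd x\,\dd v,
\]
and choose $R$, $\ell$, $\TT$ so that each of the first three ``bad'' integrals is at most $\rho_0/8$. The ``good'' integral is then at least $5\rho_0/8$; since $f\leq\TT$ on its domain,
\[
\meas{B_R^{x,v}\cap\{\ell\leq f\leq \TT\}}\geq\frac{5\rho_0}{8\TT},
\]
and this set is contained in $\{f\geq\ell\}\cap B_R^{x,v}$, yielding the claim.

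The first two bad terms are elementary. Chebyshev gives $\int_{(B_R^{x,v})^c} f\leq c_2/R^2$, bounded by $\rho_0/8$ once $R\geq\sqrt{8c_2/\rho_0}$. The small-value term satisfies $\int_{B_R^{x,v}\cap\{f<\ell\}} f\leq \ell\,|B_R^{x,v}|$, bounded by $\rho_0/8$ for $\ell:=\rho_0/(8|B_R^{x,v}|)$.

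The third term carries the main content. Setting $A:=\int_{\{f\geq 1\}} f\log f\,\dd x\,\dd v$ and $B:=\int_{\{f<1\}} f|\log f|\,\dd x\,\dd v$, Markov's inequality gives, for any $\TT>1$,
\[
\int_{\{f>\TT\}} f\,\dd x\,\dd v\leq\frac{1}{\log\TT}\int_{\{f>\TT\}} f\log f\,\dd x\,\dd v\leq\frac{A}{\log\TT}\leq\frac{h_0+B}{\log\TT},
\]
where the last step uses the entropy hypothesis $A-B\leq h_0$. To control $B$ by $\rho_0$ and $c_2$ alone, the standard device is to split $\{f<1\}$ at the weighted threshold $f=e^{-(1+|x|^2+|v|^2)}$: where $f$ lies above this threshold, $|\log f|\leq 1+|x|^2+|v|^2$ and so $f|\log f|\leq(1+|x|^2+|v|^2)f$, contributing at most $\rho_0+c_2$ to $B$; where $f$ lies below the threshold, $f|\log f|\leq\sqrt{f}\leq e^{-(1+|x|^2+|v|^2)/2}$, whose integral over $\R^6$ is a finite universal constant. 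Hence $B\leq C(1+\rho_0+c_2)$ and one may take $\TT:=\exp\bigl(8(h_0+C(1+\rho_0+c_2))/\rho_0\bigr)$.

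The only delicate point is the weighted splitting used to bound the negative part of the entropy by mass and second moment; everything else is routine, and the resulting $R$, $\ell$, $\TT$ depend only on $\rho_0$, $c_2$, $h_0$ as the statement requires.
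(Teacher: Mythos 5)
Your proof is correct and follows essentially the same three-level truncation as the paper: Chebyshev for the tail in $(x,v)$, a trivial bound on the small-value set, and a log-weighted Markov inequality for the large-value set, with the parameters $R,\ell,\TT$ chosen so that each bad piece holds at most $\rho_0/8$ of the mass. The only cosmetic difference is that you spell out, via the weighted splitting at $f=e^{-(1+|x|^2+|v|^2)}$, why the negative part of the entropy is controlled by $\rho_0$ and $c_2$, whereas the paper simply invokes this as a known bound $C(h_0,c_2)$.
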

\begin{proof}
	Fix $R\geq 0$. The estimate on the second moment of $f$ yields
	\begin{align*}
		\int_{\R^6\backslash B_R^{x,v}}f\dd x\dd v \leq R^{-2}c_2. 
	\end{align*}
	Fix $\ell\geq 0$ and consider
	\begin{align*}
		\int_{B_R^{x,v}\cap\{f\leq\ell\}}f\dd x\dd v\leq\ell \meas{B_R^{x,v}}\leq \omega_6\ell R^6,
	\end{align*}
	where $\omega_6$ is the measure of the unit ball in $\R^6$.
	Fix $\TT\geq\max\{1,\ell\}$ and consider
	\begin{align*}
		\int_{f\geq \TT}f\dd v\dd x \leq \frac{1}{\log\TT}\int_{f\geq \TT}f\log f\dd v\dd x
		\leq \frac{1}{\log\TT}\int_{\R^6}f|\log f|\dd v\dd x
		\leq \frac{C(h_0,c_2)}{\log\TT}.
	\end{align*}
	Make the following choices of parameters:
	\begin{align*}
		R = \sqrt{\frac{8 c_2}{\rho_0}},\quad 
		\ell = \frac{\rho_0}{8\omega_6 R^6},\quad 
		\TT = \exp\left(\frac{8 C(h_0,c_2)}{\rho_0}\right).
	\end{align*}
	With these choices of parameters one obtains
	\begin{align*}
		\int_{B_R^{x,v}\cap\{\ell\leq f\leq\TT\}}f\dd x\dd v\geq 
		\rho_0 - \frac{3}{8}\rho_0 = \frac{5}{8}\rho_0,
	\end{align*}
	which implies
	\begin{align*}
		\frac{5\rho_0}{8\TT}\leq \frac{1}{\TT}\int_{B_R^{x,v}\cap\{\ell\leq f\leq\TT\}}f\dd x\dd v\leq \meas{ B_R^{x,v}\cap\{\ell\leq f\leq\TT\} }\leq 
		\meas{ B_R^{x,v}\cap\{\ell\leq f\} }.
	\end{align*}
	This finishes the proof of the lemma.
\end{proof}

We now turn our attention to the function 
$$
H(x,v) := \int_{\R^3}\kappa(x-y)f(y,v)\dd y.
$$
This function will be the argument of the ellipticity estimate. In the next lemma we show that the mass, second moment and entropy of $H$ are bounded. These bounds depend on $x$ in a controllable way. 

\begin{lemma}\label{lem.aA.2}
	Let \eqref{kappa} hold. 
	There exist constants $c_H$, $C_H>0$ depending only on  $\rho_0$, $c_2$, $h_0$, and $\|\kappa\|_\infty$, such that
	\begin{align*}
	\int_{\R^3}H(x,v)\dd v & \geq c_H\bk{x}^{-\lambda},\\
	H(x,v)& \leq C_H\bk{x}^{-\lambda}\int_{\R^3}\bk{y}^{\lambda}f(y,v)\dd y,\\
	\int_{\R^3}\bk{v}^2 H(x,v)\dd v& \leq C_H\bk{x}^{-\lambda},\\
	\int_{\R^3}H\log H\dd v& \leq C_H.
	\end{align*}
\end{lemma}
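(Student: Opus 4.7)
The plan is to deduce all four estimates from the pointwise Peetre inequalities
\[ 2^{-\lambda/2}\bk{x}^{-\lambda}\bk{y}^{-\lambda}\leq\bk{x-y}^{-\lambda}\leq 2^{\lambda/2}\bk{x}^{-\lambda}\bk{y}^{\lambda}, \]
combined with \eqref{kappa}, the moment bounds from Lemma \ref{ss.moments}, the entropy hypothesis $\int f\log f\dd x\dd v\leq h_0$, and a double Jensen argument.

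For the first three bounds the argument is direct. Writing $\int H\dd v=\int\kappa(x-y)\rho(y)\dd y$ with $\rho(y):=\int f(y,v)\dd v$, the lower Peetre inequality and \eqref{kappa} yield $\int H\dd v\geq \kappa_1 2^{-\lambda/2}\bk{x}^{-\lambda}\int\bk{y}^{-\lambda}\rho(y)\dd y$; a Chebyshev step in the spirit of Lemma \ref{lem.aA.1} produces $\int_{|y|\leq R}\rho\dd y\geq\rho_0/2$ for $R=\sqrt{2c_2/\rho_0}$, so that $\int\bk{y}^{-\lambda}\rho\dd y\geq(1+R^2)^{-\lambda/2}\rho_0/2$, yielding $c_H>0$. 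The pointwise upper bound on $H$ follows in one line from the upper Peetre inequality and \eqref{kappa}. Multiplying this bound by $\bk{v}^2$ and integrating in $v$ gives $\int\bk{v}^2 H\dd v\leq C\bk{x}^{-\lambda}\int\bk{y}^\lambda\bk{v}^2 f\dd y\dd v$, and the Young-type estimate $\bk{y}^\lambda\bk{v}^2\leq\bk{y}^{\lambda+2}+\bk{v}^{\lambda+2}$, together with $\mm>\lambda+2$ and Lemma \ref{ss.moments}, closes the third inequality.

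The entropy bound is the delicate step. For each $v$ with $\bar f(v):=\int f(y,v)\dd y>0$, Jensen's inequality applied to the convex function $\eta(z)=z\log z$, the probability measure $d\mu_v(y)=f(y,v)\dd y/\bar f(v)$, and the integrand $a(y):=\bar f(v)\kappa(x-y)$ (which satisfies $\int a\dd\mu_v=H(x,v)$) yields
\[ H\log H \leq H\log\bar f(v)+\int\kappa(x-y)f(y,v)\log\kappa(x-y)\dd y. \]
Integrating in $v$ produces two contributions: the $\kappa\log\kappa$ term is uniformly bounded because $z\log z$ is bounded on $[0,\kappa_2]$, contributing at most $C(\kappa_2)\rho_0$; the first term is handled via $H\leq\kappa_2\bar f$ and the observation that $\log\bar f$ is negative on $\{\bar f<1\}$, giving $\int H\log\bar f\dd v\leq\kappa_2\int\bar f\log_+\bar f\dd v$. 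The marginal entropy is then controlled by a second Jensen step against the Gaussian probability measure $d\nu(y)=\pi^{-3/2}e^{-|y|^2}\dd y$, producing
\[ \int\bar f\log\bar f\dd v\leq\int f\log f\dd x\dd v+\int|y|^2 f\dd x\dd v+C\rho_0\leq h_0+c_2+C\rho_0, \]
while the negative part $\int\bar f\log_-\bar f\dd v$ is absorbed by splitting according to whether $\bar f\leq e^{-|v|^2}$ and using the second moment. Reassembling these pieces produces the required $C_H$.

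The main obstacle is precisely this entropy estimate: the first three bounds are essentially algebraic consequences of the Peetre inequalities and already-established moment bounds, whereas the fourth must convert a \emph{joint} $(x,v)$ entropy bound into a pointwise-in-$x$ entropy bound on the delocalized density. The nested Jensen steps accomplish this, but one must handle both the non-probabilistic character of $\kappa$ (resolved by normalizing through $\bar f$ rather than through $\kappa$) and the negative entropic contributions coming from the region $\{\bar f<1\}$.
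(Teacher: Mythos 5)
Your proof is correct, and for the entropy bound (iv) it takes a genuinely different route than the paper's. For (i) you use a direct Chebyshev step on the spatial marginal $\rho(y)=\int f(y,v)\dd v$, which shortcuts the paper's reliance on the level-set Lemma \ref{lem.aA.1}: where the paper lower-bounds $\kappa$ on the level set $\{f\geq\ell\}\cap B_R^{x,v}$, you simply observe that $\int\bk{y}^{-\lambda}\rho\dd y$ is bounded below by a Markov inequality in $y$ alone. For (ii)--(iii) the arguments coincide up to cosmetics (the paper uses $\bk{x}^\lambda\leq C(\bk{x-y}^\lambda+|y|^\lambda)$ rather than the equivalent Peetre form).

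The substantive divergence is in (iv). The paper applies Jensen with the $v$-independent probability measure $\rho(y)\dd y/\rho_0$, which produces the error term $\int_{\{\rho<1\}}\rho\log(1/\rho)\dd y$ and requires the auxiliary bound $s\log(1/s)\leq\eta^{-1}s^{1-\eta}$ plus moment interpolation to close. You instead normalize by the $v$-dependent density $f(y,v)/\bar f(v)$, obtaining
\[ H\log H\leq H\log\bar f(v)+\int\kappa(x-y)f(y,v)\log\kappa(x-y)\dd y, \]
where the $\kappa\log\kappa$ piece is trivially controlled by $C(\|\kappa\|_\infty)\rho_0$ and the remaining piece reduces via $H\leq\|\kappa\|_\infty\bar f$ to the marginal entropy $\int\bar f\log_+\bar f\dd v$. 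Your second Jensen step against a Gaussian reference measure in $y$ then yields
\[ \int\bar f\log\bar f\dd v\leq\int f\log f\dd x\dd v+\int|y|^2 f\dd x\dd v+C\rho_0\leq h_0+c_2+C\rho_0, \]
and the split on $\{\bar f\leq e^{-|v|^2}\}$ and its complement handles the negative part. This nested-Jensen route is cleaner: every term is bounded directly by $h_0$, $c_2$, $\rho_0$, $\|\kappa\|_\infty$, with no moment interpolation. Both arguments are sound; yours is arguably the more economical for this particular estimate.
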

\begin{proof} Let us show (i).
	It holds
	\begin{align*}
		\int_{\R^3}H\dd v = \int_{\R^6}\kappa(x-y)f(y,v)\dd y\dd v
		\geq \ell\int_{ \{ f\geq\ell \}\cap B_r^{x,v} }\kappa(x-y)\dd y\dd v.
	\end{align*}
	For $(y,v)\in B_R^{x,v}$ we get $\bk{x-y}^{\lambda}\leq C(\bk{x}^\lambda + R^\lambda)$, hence from \eqref{kappa} it follows
	\begin{align*}
		\kappa(x-y)\geq \kappa_1\bk{x-y}^{-\lambda}\geq 
		c\frac{1}{\bk{x}^\lambda + R^\lambda} = 
		c\frac{\bk{x}^{-\lambda}R^{-\lambda}}{\bk{x}^{-\lambda} + R^{-\lambda}}\geq c \bk{x}^{-\lambda}R^{-\lambda},
	\end{align*}
	which implies
	\begin{align*}
		\int_{\R^3}H\dd v\geq \ell \bk{x}^{-\lambda}R^{-\lambda}\meas{
			\{ f\geq\ell \}\cap B_r^{x,v} }.
	\end{align*}
	Lemma \ref{lem.aA.1} yields (i).
	
	Let us now prove (ii). From \eqref{kappa} we get
	\begin{align*}
		H(x,v) = \int_{\R^3}\kappa(x-y)f(y,v)\dd y
		\leq \kappa_2\int_{\R^3}\bk{x-y}^{-\lambda} f(y,v)\dd y.
	\end{align*}
	Via the standard inequality $\bk{x}^\lambda\leq C(\lambda)(\bk{x-y}^\lambda + |y|^\lambda)$ we obtain
	\begin{align*}
		\bk{x}^\lambda H(x,v)\leq C \int_{\R^3}f(y,v)\dd y + 
		C\int_{\R^3}\bk{x-y}^{-\lambda} |y|^\lambda f(y,v)\dd y,
	\end{align*}
	which proves (ii).
	
	From (ii) we obtain
	\begin{align*}
		\bk{x}^{\lambda}\int_{\R^3} & H(x,v)\bk{v}^2 \dd v \leq C\int_{\R^6}\bk{v}^2\bk{y}^{\lambda} f(y,v)\dd y \dd v\\
		\leq & C\int_{\R^6}(1 + |v|^\mm + |y|^\mm)f(y,v)\dd y\dd v \leq  C,
	\end{align*}
	thanks to \eqref{mom.high}. Hence (iii) follows.
	
	Finally we prove (iv). Let $\rho(y) = \int_{\R^3}f(y,v)\dd v$. We point out that $\int_{\R^3}\rho \dd y = \rho_0$. For $x,v$ generic but fixed and $\Phi(s)\equiv s\log s$ consider
	\begin{align*}
		\int_{\R^3}\kappa(x-y)f(y,v)\log\left(
		\frac{\kappa(x-y)f(y,v)}{\rho(y)}
		\right)\dd y = 
		\rho_0\int_{\R^3}\Phi\left(
		\frac{\kappa(x-y)f(y,v)}{\rho(y)}
		\right)\frac{\rho(y)}{\rho_0}\dd y.
	\end{align*}
	Apply Jensen's inequality
	\begin{align*}
		\int_{\R^3} &\kappa(x-y)f(y,v)\log\left(
		\frac{\kappa(x-y)f(y,v)}{\rho(y)}
		\right)\dd y\\
		&\geq \rho_0\Phi\left(
		\int_{\R^3} \frac{\kappa(x-y)f(y,v)}{\rho(y)} \frac{\rho(y)}{\rho_0}\dd y\right)\\
		&=\left(
		\int_{\R^3}\kappa(x-y)f(y,v)\dd y
		\right)\log\left(
		\int_{\R^3}\frac{\kappa(x-y)f(y,v)}{\rho_0}\dd y
		\right)\\
		&=H\log\frac{H}{\rho_0}.
	\end{align*}
	Integrate in $v$:
	\begin{align*}
		\int_{\R^3} & H\log H\dd v - \rho_0\log\rho_0\\
		&\leq 
		\int_{\R^6} \kappa(x-y)f(y,v)\log\left(
		\frac{\kappa(x-y)f(y,v)}{\rho(y)}
		\right)\dd y\dd v\\
		&\leq 
		\int_{\R^6} \kappa(x-y)f(y,v)\log f(y,v)\dd y\dd v\\
		&\qquad +\int_{\R^6} \kappa(x-y)f(y,v)
		\log\kappa(x-y) \dd y\dd v \\
		&\qquad -\int_{\R^6} \kappa(x-y)f(y,v)\log\rho(y)\dd y\dd v\\
		&\leq \|\kappa\|_\infty \int_{\R^6}f(y,v)|\log f(y,v)|\dd y\dd v\\
		&\qquad + \rho_0\|\kappa\|_\infty\log\max(1,\|\kappa\|_\infty)\\
		&\qquad +\|\kappa\|_\infty\int_{\{\rho<1\}}\rho(y)\log\frac{1}{\rho(y)}\dd y.
	\end{align*}
	Given that $\log x\leq x$ and $s\log(1/s)\leq \eta^{-1}s^{1-\eta}$ for $0<s<1$ and $0<\eta<1$, we conclude that 
	%
	%
	%
	%
	%
	%
	(iv) holds. This finishes the proof.
\end{proof}

Next we show that $H$ is uniformly bounded from below by a constant in a non-trivial set in the velocity space. 
\begin{lemma}\label{lem.aA.3}
	Let $H$ as in the previous lemma. There exist positive constants $\tilde\ell$, $\tilde R$, $\mu$ depending on $\rho_0$, $c_2$, $h_0$ such that
	\begin{align*}
		\meas{ 
			v\in\R^3~:~
			H(x,v)\geq\tilde\ell \bk{x}^{-\lambda},~~ |v|<\tilde{R} }\geq \mu\qquad\mbox{a.e.~$x\in\R^3$.}
	\end{align*}
\end{lemma}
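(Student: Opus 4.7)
The plan is to run the recipe of Lemma~\ref{lem.aA.1} on the rescaled quantity $\bar H(v):=\bk{x}^\lambda H(x,v)$, with $x\in\R^3$ regarded as a fixed parameter. The three ingredients needed for that recipe are a uniform-in-$x$ mass lower bound, a uniform-in-$x$ second-moment upper bound, and a uniform-in-$x$ entropy upper bound for $\bar H$. Parts (i) and (iii) of Lemma~\ref{lem.aA.2} give the first two directly,
\[
\int_{\R^3}\bar H(v)\,\dd v\geq c_H,\qquad \int_{\R^3}\bk{v}^2\bar H(v)\,\dd v\leq C_H.
\]

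The subtle ingredient is the entropy estimate, since (iv) of Lemma~\ref{lem.aA.2} as stated does not suffice: rescaling yields $\int \bar H\log\bar H\,\dd v=\bk{x}^\lambda\int H\log H\,\dd v+\lambda\bk{x}^\lambda\log\bk{x}\int H\,\dd v$, and the second term may grow without bound in $x$. I would instead repeat the Jensen computation behind (iv) but with the weighted kernel $\bar\kappa(x,y):=\bk{x}^\lambda\kappa(x-y)$ in place of $\kappa(x-y)$. Combining \eqref{kappa} with the elementary inequality $\bk{x}^\lambda\leq C(\lambda)(\bk{x-y}^\lambda+|y|^\lambda)$ gives the pointwise bound $\bar\kappa(x,y)\leq C(\lambda,\kappa_2)(1+|y|^\lambda)$ uniformly in $x$, so the Jensen computation bounds $\int\bar H\log\bar H\,\dd v$ in terms of a $(1+|y|^\lambda)$-weighted $L\log L$ norm of $f$. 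The latter is controlled uniformly in time and in $x$ by Lemma~\ref{ss.moments} (precisely where the strengthened assumption $\mm>\lambda+2$ is used), the $H$-theorem, and the inequality $s\log(1/s)\leq\eta^{-1}s^{1-\eta}$ needed to handle the negative part of $f\log f$ on the set $\{f<1\}$. The output is $\int_{\R^3}\bar H\log\bar H\,\dd v\leq\bar C$, uniformly in $x$.

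Once these three bounds are in hand, the three-level splitting of Lemma~\ref{lem.aA.1}, now applied to $\bar H$ in velocity space, proceeds identically. For large $\tilde R$, small $\tilde\ell$, and large $\TT$ one decomposes $c_H\leq\int_{\R^3}\bar H\,\dd v$ as the sum of the mass on $\{|v|\geq\tilde R\}$ (bounded by $\tilde R^{-2}C_H$ via the second moment), on $\{|v|<\tilde R,\,\bar H<\tilde\ell\}$ (bounded by $\tilde\ell\,\omega_3\tilde R^3$), on $\{\bar H>\TT\}$ (bounded by $\bar C/\log\TT$ via the entropy), and on the ``good'' middle slab $\{|v|<\tilde R,\,\tilde\ell\leq\bar H\leq\TT\}$. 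Choosing the parameters so that the first three contributions sum to at most $c_H/2$, the middle slab carries mass at least $c_H/2$; bounding its contribution above by $\TT\cdot\meas{\bar H\geq\tilde\ell,\,|v|<\tilde R}$ yields $\meas{\bar H\geq\tilde\ell,\,|v|<\tilde R}\geq c_H/(2\TT)=:\mu$. Unwinding the definition of $\bar H$ gives precisely $H(x,v)\geq\tilde\ell\bk{x}^{-\lambda}$ on this set, uniformly in $x$.

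The main obstacle is the rescaled entropy bound in the second step: estimate (iv) of Lemma~\ref{lem.aA.2} is not enough by itself because the rescaling introduces the uncontrolled $\bk{x}^\lambda\log\bk{x}$ term noted above, and the remedy is to absorb the factor $\bk{x}^\lambda$ into the kernel via the bound $\bar\kappa\leq C(1+|y|^\lambda)$, which is only useful thanks to the strengthened moment assumption $\mm>\lambda+2$ baked into \eqref{hp.ic}.
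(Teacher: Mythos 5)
Your strategy of running the three-level splitting from Lemma~\ref{lem.aA.1} on $\bar H(v)=\bk{x}^{\lambda}H(x,v)$, using the uniform-in-$x$ mass and second-moment bounds from Lemma~\ref{lem.aA.2}(i),(iii), is the right skeleton and indeed matches the overall plan of the paper. The gap is in your entropy step. You replace $\kappa$ by $\bar\kappa(x,y)=\bk{x}^{\lambda}\kappa(x-y)\le C(1+|y|^{\lambda})$ in the Jensen argument and claim the resulting weighted $L\log L$ quantity $\int (1+|y|^{\lambda})\,f\,(\log f)_{+}\,\dd y\,\dd v$ is controlled by Lemma~\ref{ss.moments}, the $H$-theorem, and $s\log(1/s)\le\eta^{-1}s^{1-\eta}$. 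That last inequality only handles the \emph{negative} part of $f\log f$, which is not the problem here: since $\bar\kappa\ge 0$, dropping the negative part only helps. The problematic piece is the \emph{positive} part weighted by $(1+|y|^{\lambda})$, and this is not bounded by $\int f\log f$ together with finitely many moments of $f$. One can spread small amounts of mass at radii $|y|\sim 2^{n}$ with heights $h_{n}=e^{2^{n\mm}}$ and measures $m_{n}\sim 2^{-n(1+\mm)}e^{-2^{n\mm}}$ so that both $\sum_{n}2^{n\mm}h_{n}m_{n}$ (the $\mm$-th moment) and $\sum_{n}h_{n}m_{n}\log h_{n}$ (the entropy) converge, yet $\sum_{n}2^{n\lambda}h_{n}m_{n}\log h_{n}$ diverges; so the inequality you assert simply is not available at this stage of the argument (and cannot be salvaged by the $L^{\infty}$ bound of Lemma~\ref{lem.Linf}, which is established later and itself relies on Lemma~\ref{lem.aA.lb}, hence on the present lemma).

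The paper circumvents exactly this obstruction: instead of proving an $L\log L$ bound for $\bar H$, it controls $\int_{\{H\ge\TT\}}H\,\dd v$ directly. The pointwise estimate of Lemma~\ref{lem.aA.2}(ii) and a H\"older interpolation against the high $y$-moment $\int\bk{y}^{\mm}f$ factor out the weight $\bk{y}^{\lambda}$, leaving a power of $\int \mathbf{1}_{\{H(x,\cdot)\ge\TT\}}F(v)\,\dd v$ where $F$ is the \emph{unweighted} velocity marginal $\int f(y,\cdot)\,\dd y$. The entropy of $F$ is genuinely controlled (it is the case $\kappa\equiv 1$ of parts (iii),(iv)), and the convex pair $\phi(s)=\delta(1+s)\log(1+s)-\delta s$, $\phi^{*}(\sigma)=\delta e^{\sigma/\delta}-\delta-\sigma$ together with a Young absorption closes the estimate. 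If you want to keep your rescaled formulation, you would need to replace your entropy bound for $\bar H$ with this interpolation-plus-conjugate-pair argument; as written, the middle step of your proof does not go through.
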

\begin{proof}
	We use a similar method as in Lemma \ref{lem.aA.1} but this time compute the bound on $B_R^v$ instead of on $B_R^{x,v}$. 
	Fix $R>0$. The estimate on the second moment of $f$ yields
	\begin{align*}
		\int_{\R^3\backslash B_R^{v}}H(x,v)\dd v \leq C_H R^{-2}\bk{x}^{-\lambda}.
	\end{align*}
	Fix $\ell=\ell(x)=\tilde{\ell}\bk{x}^{-\lambda}\geq 0$ and consider
	\begin{align*}
		\int_{B_R^{v}\cap\{H\leq\ell\}}H\dd v\leq\ell \meas{B_R^{v}}\leq \frac{4\pi}{3}\ell R^3  = 
		\frac{4\pi}{3}\tilde\ell R^3\bk{x}^{-\lambda}.
	\end{align*}
	Fix $\TT = \TT(x) = \tilde{\TT}\bk{x}^{-\lambda}$ with $\tilde{\TT}>0$ and estimate via point (ii) in Lemma \ref{lem.aA.2}
	\begin{align*}
		\int_{ H(x,\cdot) \geq \TT }  H(x,v)\dd v &\leq C\bk{x}^{-\lambda}\int_{ H(x,\cdot)\geq\TT}\bk{y}^\lambda f(y,v)\dd y\dd v\\
		&=C\bk{x}^{-\lambda}\int_{\R^6} {\bf 1}_{ \{H(x,\cdot)\geq \TT\} }\bk{y}^\lambda f(y,v)\dd y\dd v\\
		&\leq C\bk{x}^{-\lambda}\left(
		\int_{\R^6}\bk{y}^\mm f(y,v)\dd y\dd v
		\right)^{\frac{\lambda}{\mm}}
		\left(
		\int_{\R^6}{\bf 1}_{\{H(x,\cdot)\geq \TT\}}f(y,v)\dd y\dd v
		\right)^{1-\frac{\lambda}{\mm}}\\
		&\leq C\bk{x}^{-\lambda}\left(
		\int_{\R^3}{\bf 1}_{\{H(x,\cdot)\geq \TT\}}F(v)\dd v
		\right)^{1-\frac{\lambda}{\mm}}.
	\end{align*}
	We employ now the couple of convex conjugated functions
	\begin{align*}
		\phi(s) = \delta(1+s)\log(1+s)-\delta s,\qquad 
		\phi^*(\sigma)=\delta e^{\sigma/\delta}-\delta-\sigma ,
	\end{align*}
	and apply Young's inequality to obtain
	\begin{align*}
		\int_{\R^3} & {\bf 1}_{\{H(x,\cdot)\geq \TT\}}F(v)\dd v\\
		&\leq 
		\delta \int_{\R^3}[(1+F)\log(1+F)-F]\dd v 
		+(\delta e^{1/\delta} - \delta - 1)\meas{\{H(x,\cdot)\geq \TT\}}\\
		&\leq \delta \int_{\R^3}(1+F)\log(1+F)\dd v 
		+\delta e^{1/\delta}\TT^{-1}\int_{ H(x,\cdot) \geq \TT } H(x,v)\dd v.
	\end{align*}
	Points (iii), (iv) of Lemma \ref{lem.aA.2} imply that $H\log(H)\in L^\infty_x(\R^3,L^1_v(\R^3))$ for every choice of $\kappa\in L^\infty(\R^3)$ satisfying \eqref{kappa}. In particular this holds true for $\kappa\equiv 1$, in which case $H = F$, meaning that
	$\int_{\R^3}F\log F\dd v\leq C$. This fact, together with the bound on mass and second moment of $F$, implies
	that $\int_{\R^3}F|\log F|\dd v\leq C$, which easily yields $\int_{\R^3}(1+F)\log(1+F)\dd v\leq C$. Therefore
	\begin{align*}
		\int_{\R^3} & {\bf 1}_{\{H(x,\cdot)\geq \TT\}}F(v)\dd v
		\leq \delta C +\delta e^{1/\delta}\TT^{-1}\int_{ H(x,\cdot) \geq \TT } H(x,v)\dd v.
	\end{align*}
	It follows
	\begin{align*}
		\int_{ H(x,\cdot) \geq \TT } & H(x,v)\dd v \\
		\leq & C\bk{x}^{-\lambda}\delta^{1-\frac{\lambda}{\mm}}
		+ \bk{x}^{-\lambda} \frac{(\delta e^{1/\delta})^{1-\frac{\lambda}{\mm}}}{\TT^{1-\frac{\lambda}{\mm}}}\left(
		\int_{ H(x,\cdot) \geq \TT } H(x,v)\dd v
		\right)^{1-\frac{\lambda}{\mm}}\\
		\leq & 
		C\bk{x}^{-\lambda}\delta^{1-\frac{\lambda}{\mm}}
		+ C\bk{x}^{-\mm} \frac{(\delta e^{1/\delta})^{\frac{\mm}{\lambda} - 1}}{\TT^{\frac{\mm}{\lambda} - 1}} + \frac{1}{2}
		\int_{ H(x,\cdot) \geq \TT } H(x,v)\dd v.
	\end{align*}
	Therefore, given that $\TT = \TT(x) = \tilde{\TT}\bk{x}^{-\lambda}$
	\begin{align*}
		\int_{ H(x,\cdot) \geq \TT } H(x,v)\dd v 
		\leq 
		C\bk{x}^{-\lambda}\left[
		\delta^{1-\frac{\lambda}{\mm}}
		+ \frac{(\delta e^{1/\delta})^{\frac{\mm}{\lambda} - 1}}{\tilde{\TT}^{\frac{\mm}{\lambda} - 1}}
		\right] .
	\end{align*}
	Replacing $\delta$ with $\delta^{\frac{\mm}{\mm-\lambda}}$ times a suitable constant yields
	\begin{align}
		\int_{ H(x,\cdot) \geq \TT } H(x,v)\dd v \leq
		\bk{x}^{-\lambda}\left[
		\delta
		+ \frac{C_H'(\delta)}{\tilde{\TT}^{\frac{\mm}{\lambda} - 1}}
		\right].
	\end{align}
	Make the following choices of parameters:  $R>0$ such that $R^{-2}C_H = \frac{1}{2}c_H$, choose $\tilde{\ell}>0$ such that $\frac{4\pi}{3}\tilde\ell R^3 = \frac{1}{4}c_H$, choose $\delta = \frac{c_H}{16}$ , choose $\tilde{\TT}>0$ such that $\frac{C_H'(\delta)}{\tilde{\TT}^{\frac{\mm}{\lambda} - 1}} = \frac{c_H}{16}$.  With these choices of parameters and point (i) in Lemma \ref{lem.aA.2} one obtains
	\begin{align*}
		\int_{B_R^{v}\cap\{\ell\leq H\leq\TT\}}H\dd v\geq 
		c_H\bk{x}^{-\lambda} - \frac{1}{2}c_H\bk{x}^{-\lambda} 
		-\frac{1}{4}c_H\bk{x}^{-\lambda}
		-\frac{1}{8}c_H\bk{x}^{-\lambda}
		=\frac{1}{8}c_H\bk{x}^{-\lambda},
	\end{align*}
	which implies
	\begin{align*}
		\frac{c_H}{8\tilde{\TT}} = 
		\frac{c_H\bk{x}^{-\lambda}}{8\TT(x)}\leq \int_{B_R^{v}\cap\{\ell\leq H\leq\TT\}}\frac{H}{\TT}\dd v\leq \meas{ B_R^{v}\cap\{\ell\leq H\leq\TT\} }\leq 
		\meas{ B_R^{v}\cap\{\ell\leq H\} }.
	\end{align*}
	This finishes the proof.
\end{proof}

We are now ready to show  \eqref{aA.lb}. Thanks to the previous lemma, we can follow the steps of the homogeneous case. 
\begin{proof}[Proof of Lemma \ref{lem.aA.lb}]
	We start from the definition of $A[f]$:
	\begin{align*}
		A[f] &= \int_{\R^3}\kappa(x-y)\int_{\R^3}f(y,v-w)|w|^{\gamma+2}\Pi(w)\dd w\dd y
		= \int_{\R^3}H(x,v-w)|w|^{\gamma+2}\Pi(w)\dd w,
	\end{align*}
	where $H(x,v) = \int_{\R^3}\kappa(x-y)f(y,v)\dd y$.
	Let $e\in \R^3$, $|e|=1$ arbitrary. It holds
	\begin{align*}
		e\cdot A[f]e &= \int_{\R^3}H(x,v-w)|w|^{\gamma+2}\, e\cdot\Pi(w)e\, \dd w\\
		&\geq \int_{B_R(v)}H(x,v-w)|w|^{\gamma+2}e\cdot\Pi(w)e\, \dd w,
	\end{align*}
where $R>0$ is as in Lemma \ref{lem.aA.3}.
Let also $\ell(x) = \tilde\ell \bk{x}^{-\lambda}$ and  $$\mathcal{D}_R(v)\equiv\{w:~|v-w|<R,~ H(x,v-w)\geq\ell(x) \}$$ 
for brevity. According to Lemma \ref{lem.aA.3}, we can choose $R$, $\tilde{\ell}$ such that $\meas{\mathcal{D}_R(v)}\geq\mu>0$. 
It holds
\begin{align*}
e\cdot A[f]e &\geq \ell(x)\int_{\mathcal{D}_R(v)}
|w|^{\gamma+2}e\cdot\Pi(w)e\,\dd w .
\end{align*}
We observe that, for $w\in \mathcal{D}_R(v)$
	\begin{align*}
		|w|^{-\gamma}\leq C(|v-w|^{-\gamma} + |v|^{-\gamma})
		\leq C(R^{-\gamma} + |v|^{-\gamma})\leq 
		C(1+R^{-\gamma})\bk{v}^{-\gamma},
	\end{align*}
hence $|w|^{\gamma}\geq C\bk{v}^{\gamma}$, which leads to
\begin{align*}
e\cdot A[f]e &\geq \tilde{\ell}\bk{x}^{-\lambda}\bk{v}^{\gamma}\int_{\mathcal{D}_R(v)}
(|w|^{2} - (e\cdot w)^2)\dd w.
\end{align*}
Our next goal is to show that
\begin{align}\label{A.lb.goal}
\inf_{(e,v,x)\in \mathbb{S}^2\times\R^3\times\R^3}\int_{\mathcal{D}_R(v)}
(|w|^{2} - (e\cdot w)^2)\dd w>0.
\end{align}
If we can do this, then the statement of the Lemma follows
with $\alpha$ equal to $\tilde{\ell}$ times the above infimum.
It holds
\begin{align}\label{A.lb.2}
\int_{\mathcal{D}_R(v)}
(|w|^{2} - (e\cdot w)^2)\dd w
&\geq 
\eta^2 \meas{D_R(v)\backslash X_\eta(e,v)}\\ \nonumber
&\geq \eta^2 \meas{D_R(v)} - \eta^2\meas{X_\eta(e,v)}\\ \nonumber
&\geq\eta^2(\mu - \meas{X_\eta(e,v)}),
\end{align}
with 
\begin{align*}
X_\eta(e,v)\equiv \{ w\in\R^3 ~ : ~ |w|^2 - (w\cdot e)^2 < \eta^2,~~ |w-v|<R \}.
\end{align*}
We point out that
\begin{align*}
X_\eta(e,v) = \mathcal{C}_\eta(e)\cap B_R(v),\quad 
\mathcal{C}_\eta(e) \equiv \{ w\in\R^3 ~ : ~ |w|^2 - (w\cdot e)^2 < \eta^2\}.
\end{align*}
Being $\mathcal{C}_\eta(e)$ a cylinder centered in $0$ with radius $\eta<R$ and axis $e$, the measure of the above intersection is maximized (for fixed $v\in\R^3\backslash\{0\}$) when $e$ and $v$ are parallel:
\begin{align*}
\meas{X_\eta(e,v)}\leq \meas{X_\eta(v/|v|,v)}\equiv \meas{\tilde X_\eta(v)}\quad\mbox{for }v\in\R^3\backslash\{0\}.
\end{align*}
Assume now $v\neq 0$. Let $w\in \tilde X_\eta(v)$. A change of variable $w = v + \rho\hat{z}$, $|\hat{z}|=1$, $\rho\in [0,R)$ and straightforward computations lead to
	\begin{align*}
		|w|^2 - (w\cdot\hat{v})^2 = \rho^2(1-(\hat{v}\cdot\hat{z})^2).
	\end{align*}
	This means that $\tilde X_\eta(v) = v + Y_\eta(v)$ where
	\begin{align*}
		Y_\eta(v) = \{ z\in\R^3~:~|z|<R,~
		|z|^2(1-(\hat{v}\cdot\hat{z})^2)<\eta^2 \}.
	\end{align*}
	In particular $\meas{\tilde X_\eta(v)}=\meas{Y_\eta(v)}$. Furthermore, we observe that $\meas{Y_\eta(v)}$ only depends on $\hat{v}$ and it is invariant for rotations of $\hat{v}$, it is therefore independent of $v$. This implies that
\begin{align*}
\meas{X_\eta(e,v)}\leq \meas{Y_\eta(e_3)}\quad\mbox{for $v\neq 0$, $|e|=1$},
\end{align*}
with $e_3 = (0,0,1)$. On the other hand, if $v=0$ then 
$\meas{X_\eta(e,v)} = \meas{\mathcal{C}_\eta(e)\cap B_R}
= \meas{\mathcal{C}_\eta(e_3)\cap B_R}
= \meas{Y_\eta(e_3)}$ via an easy symmetry argument,  and therefore
\begin{align*}
\sup_{(e,v)\in\mathbb{S}^2\times\R^3}\meas{X_\eta(e,v)}\leq 
\meas{Y_\eta(e_3)}.
\end{align*}
It holds
\begin{align*}
Y_\eta(e_3) &= \{ z\in\R^3~:~|z|<R,~|z|^2(1-\hat{z}_3^2)<\eta^2 \}\\
&= \{ z\in\R^3~:~ z_1^2 + z_2^2 + z_3^2<R^2,~ z_1^2 + z_2^2<\eta^2 \}.
\end{align*}
Being $Y_\eta(v)$ the intersection between a sphere and cylinder, both centered in zero, with the cylinder radius equal to $\eta$, it follows that its measure tends to zero as $\eta\to 0$, implying
$$
\lim_{\eta\to 0}\sup_{(e,v)\in\mathbb{S}^2\times\R^3}\meas{X_\eta(e,v)} = 0.
$$
	Choosing $\eta>0$ small enough and employing \eqref{A.lb.2} yields \eqref{A.lb.goal}. This finishes the proof of the Lemma.
\end{proof}

\subsection{Existence of weak solutions}
The proof consists on a sequence of approximation steps. 

\subsubsection{Approximated equations.}\label{ss.approx}
Let us now consider the following time-discrete approximated equations:
\begin{align}\label{1.app}
\tau^{-1}(f_k-f_{k-1}) + e^{-\delta|v|^2}v\cdot \nabla_x f_k - \eps(\Delta_x + \Delta_v)f_k = Q_\delta(f_k,f_k)\qquad x, v\in\R^3,~~
k\in\N,
\end{align}
where the approximated collision operator $Q_\delta (f,g)$ is given by
\begin{align}\label{Q.app}
Q_\delta (f,g) 
= \Div_v\int_{\R^3}\int_{\R^3}N_\delta(v-v^*)\kappa_\delta(x-x^*)(f^*\nabla_v g - g\nabla_{v^*} f^*)\dd x^*\dd v^* ,
\end{align}
the regularized spatial kernel is
\begin{align}\label{kappa.app}
\kappa_\delta(y) = \kappa(y)\exp(-\delta |y|^2),
\end{align}
and the regularized Landau kernel is
\begin{align}\label{N.app}
N_\delta(w)=e^{-\delta |w|^2}
(\delta + |w|^2)^{1+\gamma/2}\left(
\eye - \frac{w\otimes w}{\delta + |w|^2}\right),
\end{align}
and we set 
\begin{align}\label{f0.app}
f_0 \equiv \min(f^{in},\eps^{-1}e^{-|v|^2}).
\end{align}
We point out that $\kappa_\delta$ and $N_\delta$ are Schwartz functions; in particular $\kappa_\delta$, $N_\delta$ and their derivatives of any order are bounded and integrable in $\R^3$. 

\subsection*{Solving the approximated equations.}
To solve \eqref{1.app} we reformulate it as a fixed point problem. We define the mapping 
$$F : (\tilde{f},\sigma)\in X\times [0,1]\mapsto f\in X,\qquad X=L^1(\R^3\times\R^3), $$ 
such that
\begin{align}\label{1.fp}
\tau^{-1}(f-f_{k-1}) + e^{-\delta|v|^2}v\cdot \nabla_x f - \eps(\Delta_x + \Delta_v)f = \sigma Q_\delta(\tilde{f},f)\qquad x, v\in\R^3 .
\end{align}
It is easy to see that \eqref{1.fp} can be uniquely solved for $f\in H^1(\R^3\times\R^3)$. 
Let us test \eqref{1.fp} against $f$ to find a $\sigma-$uniform bound for $f$ in $H^1(\R^3\times\R^3)$: 
\begin{align}\label{2.fp}
\frac{1}{2\tau}\int_{\R^6}f^2 \dd{x} \dd{v} &- \frac{1}{2\tau}\int_{\R^6}|f_{k-1}|^2 \dd{x} \dd{v} + \eps \int_{\R^6}( |\nabla_x f|^2 + |\nabla_v f|^2 )\dd{x} \dd{v}\\ \nonumber
&= \sigma \int_{\R^6}f Q_\delta(\tilde{f},f)\dd{x} \dd{v}. 
\end{align}
It suffices to bound the collision term:
\begin{align*}
\int_{\R^6}f Q_\delta(\tilde{f},f)\dd{x} \dd{v} = 
-\int_{\R^6}\nabla_{v}f\cdot A_\delta[\tilde{f}]\nabla_{v}f \dd{x} \dd{v}
+\int_{\R^6}f \nabla_{v}f\cdot b_\delta[\tilde{f}]\dd{x} \dd{v}.
\end{align*}
The matrix $A_\delta[\tilde{f}]$ 
\begin{align*}
A_\delta[\tilde{f}]=\int_{\R^6}N_\delta(v-v^*)\kappa_\delta(x-x^*)\tilde{f}(x^*,v^*) \dd{x}^* \dd{v}^*,
\end{align*}
is positive semidefinite, while the vector $b_\delta[\tilde{f}]$
\begin{align*}
b_\delta[\tilde{f}] = \int_{\R^6}\Div N_\delta(v-v^*)\kappa_\delta(x-x^*)\tilde{f}(x^*,v^*) \dd{x}^* \dd{v}^*,
\end{align*}
is bounded by a constant depending only on $\eps$ and the $L^2$ norm of $\tilde{f}$:
\begin{align*}
|b_\delta[\tilde{f}]|\leq \|\Div (N_\delta)\kappa_\delta\|_{L^\infty(\R^3\times\R^3)}\|\tilde{f}\|_{L^1(\R^3\times\R^3)}.
\leq C(\delta)\|\tilde{f}\|_{L^1(\R^3\times\R^3)}.
\end{align*}
We obtain
\begin{align*}
\int_{\R^6}f Q_\delta(\tilde{f},f)\dd{x} \dd{v}\leq C(\delta)\|\tilde{f}\|_{L^1(\R^3\times\R^3)}\|{f}\|_{L^2(\R^3\times\R^3)}\|\nabla_v {f}\|_{L^2(\R^3\times\R^3)}.
\end{align*}
The above bound, \eqref{2.fp} and Young's inequality yield
\begin{align*}
\left(\frac{1}{2\tau} - C(\delta)\|\tilde{f}\|_{L^1(\R^3\times\R^3)}^2\right) & \int_{\R^6}f^2 \dd{x} \dd{v}
+ \frac{\eps}{2}\int_{\R^6}( |\nabla_x f|^2 + |\nabla_v f|^2 )\dd{x} \dd{v}
\\ &\leq \frac{1}{2\tau}\int_{\R^6}|f_{k-1}|^2 \dd{x} \dd{v}.
\end{align*}
This means that, if $\tau>0$ is small enough, we obtain a $\sigma-$uniform bound for $f$ in $H^{1}(\R^3\times\R^3)$. 

To prove that the map $F$ is well-posed we need to show that $f\in X=L^1(\R^3\times\R^3)$; we will achieve that by showing the following mass conservation property:
\begin{align}\label{app.mass}
	\int_{\R^6}f \dd{x} \dd{v} = \int_{\R^6}f_{k-1}\dd{x} \dd{v}.
\end{align}
For $R\geq 1$ set $\eta_R(x,v) = \eta_1(x/R,v/R)$, $\eta_1(x,v) = (1+|x|^2+|v|^2)^{-\alpha}$ for some $\alpha>d$ to be determined later. We point out that $\eta_R\in H^1\cap L^1(\R^3\times\R^3)$ and therefore it is an admissible test function. Furthermore it holds
\begin{align}\label{etaR}
	|\nabla\eta_R|\leq C R^{-1}\eta_R,\qquad |\Delta\eta_R|\leq C R^{-2}\eta_R,\qquad 
	\nabla\in\{\nabla_x,\nabla_v\}, ~~ \Delta\in\{\Delta_x,\Delta_v\}.
\end{align}
We employ $\eta_R$ as test function in \eqref{1.fp}. We get
\begin{align}\label{app.mass.R}
\tau^{-1}&\left(
\int_{\R^6}\eta_R f \dd{x} \dd{v} - \int_{\R^6}\eta_R f_{k-1} \dd{x} \dd{v} 
\right) \\ \nonumber
=& 
\int_{\R^6} f e^{-\delta |v|^2}v\cdot \nabla_x\eta_R \dd{x} \dd{v}
+\eps\int_{\R^6}f (\Delta_x+\Delta_v)\eta_R \dd{x} \dd{v}
+\sigma\int_{\R^6}\eta_R Q_\delta(\tilde{f},f) \dd{x} \dd{v}.
\end{align}
We estimate the terms via \eqref{etaR} as follows
\begin{align*}
&\left| \int_{\R^6} f e^{-\delta |v|^2}v\cdot \nabla_x\eta_R \dd{x} \dd{v}\right| \leq C
R^{-1}\int_{\R^6} f \eta_R \dd{x} \dd{v},\\
&\eps\left| \int_{\R^6}f (\Delta_x+\Delta_v)\eta_R \dd{x} \dd{v}\right| \leq 
C R^{-2}\int_{\R^6} f \eta_R \dd{x} \dd{v},\\
&\left| \int_{\R^6}\eta_R Q_\delta(\tilde{f},f)\dd{x} \dd{v} \right| \\
&\qquad \leq 
\left| \int_{\R^6}\nabla_{v}\eta_R\cdot A_\delta[\tilde{f}]\nabla_{v}f \dd{x} \dd{v}\right| 
+\left| \int_{\R^6}f \nabla_{v}\eta_R\cdot b_\delta[\tilde{f}]\dd{x} \dd{v}\right| \\
&\qquad \leq
CR^{-1}\left(
\|f\|_{L^2(\R^3\times\R^3)}^2 + \|\nabla_{v}f\|_{L^2(\R^3\times\R^3)}^2
+\|A_\delta[\tilde{f}]\|_{L^2(\R^3\times\R^3)}^2
+\|b_\delta[\tilde{f}]\|_{L^2(\R^3\times\R^3)}^2
\right) .
\end{align*}
Young's inequality yields 
\begin{align*}
	\|A_\delta[\tilde{f}]\|_{L^2(\R^3\times\R^3)} + 
	\|b_\delta[\tilde{f}]\|_{L^2(\R^3\times\R^3)}\leq 
	C(\delta)\|\tilde{f}\|_{L^1(\R^3\times\R^3)},
\end{align*}
hence by taking the limit $R\to\infty$ in \eqref{app.mass.R} we obtain $f\in L^1(\R^3\times\R^3)$ and \eqref{app.mass} holds. In particular $F$ is well-posed.

We aim now at proving a $\sigma-$uniform bound for $(1+|v|^2)^{1/2}f$ in $L^1(\R^3\times\R^3)$. We test {formally}\footnote{
One possible rigorous way to obtain the same estimate would be to employ $(1+|v|^2)^{1/2}e^{-\theta|v|^2}$ as test function with $\theta>0$ arbitrary and then take the limit $\theta\to 0$. The additional terms obtained can be easily estimated.} \eqref{1.fp} against $(1+|v|^2)^{1/2}$ to obtain
\begin{align*}
\frac{1}{\tau}\int_{\R^6}(1+|v|^2)^{1/2}f \dd{x} \dd{v} = &
\frac{1}{\tau}\int_{\R^6}(1+|v|^2)^{1/2}f_{k-1} \dd{x} \dd{v}
+\eps \int_{\R^6}\Delta_v((1+|v|^2)^{1/2})~ f \dd{x} \dd{v}\\
&+\sigma \int_{\R^6}(1+|v|^2)^{1/2} Q_\delta(\tilde{f},f)\dd{x} \dd{v}.
\end{align*}
We focus on the contribution from the collision term:
\begin{align*}
\int_{\R^6} & (1+|v|^2)^{1/2} Q_\delta(\tilde{f},f)\dd{x} \dd{v} \\
&= 
-\int_{\R^6}\int_{\R^6}\frac{v N_\delta(v-v^*)}{(1+|v|^2)^{1/2}}
\kappa_\delta(x-x^*)(
\tilde{f}^*\nabla_{v}f - f \nabla_{v^*}\tilde{f}^*)
\dd{x}^* \dd{v}^* \dd{x} \dd{v}\\
&\leq C(\delta)\|\tilde{f}\|_{L^1(\R^3\times\R^3)}(\|{f}\|_{L^2(\R^3\times\R^3)}+\|\nabla{f}\|_{L^2(\R^3\times\R^3)}).
\end{align*}
We infer
\begin{align}\label{fp.bound}
\|f\|_{H^1(\R^3\times\R^3)} +& \|(1+|v|)f\|_{L^1(\R^3\times\R^3)}\\ \nonumber 
&\leq 
C(\delta,\tau,\|\tilde{f}\|_{L^1(\R^3\times\R^3)},
\|{f_{k-1}}\|_{L^2(\R^3\times\R^3)},\|{(1+|v|)f_{k-1}}\|_{L^1(\R^3\times\R^3)}).
\end{align}
This implies that $F$ is a compact mapping. It is easy to show that $F$ is also continuous and that $F(\cdot,0):X\to X$ is constant. Finally, the set $\{f\in X~:~F(f,\sigma)=f\}$ is bounded in $X$ uniformly in $\sigma$ thanks to mass conservation \eqref{app.mass}.
Leray-Schauder's fixed point theorem implies the existence of a fixed point $f=f_k\in X$ for $F(\cdot,1)$, i.e.~a solution $f_k\in H^1(\R^3\times\R^3)\cap L^1(\R^3\times\R^3,(1+|v|)\dd{x}\dd{v})$ to \eqref{1.app}.

\subsection*{Estimates for the approximated solution.}
We derive an entropy inequality for \eqref{1.app}. We would like to test \eqref{1.app} with $\log f_k$, which is not admissible, therefore we employ $\log(1+f_k/\theta)$ as test function and then take the limit $\theta\to 0$. Let us define the regularized entropy:
\begin{align*}
S_\theta(f_{k}) &= \int_{\R^6}\left(
(f_{k}+\theta)\log\left(1+\frac{f_{k}}{\theta}\right) - f_{k} 
\right)\dd{x} \dd{v} \\
&= \int_{\R^6}(
(f_{k}+\theta)\log (f_{k}+\theta) - \theta\log\theta)\dd{x} \dd{v} - (1+\log\theta)\int_{\R^6}f_{k} \dd{x} \dd{v}.
\end{align*}
The convexity of $S_\theta$ yields
\begin{align}\label{app.Sdelta}
\tau^{-1}&(S_\theta(f_{k})-S_\theta(f_{k-1}))\\  \nonumber
&\leq \int_{\R^6}\tau^{-1}(f_{k}-f_{k-1})\log(1+f_{k}/\theta) \dd{x} \dd{v} \\ \nonumber
&=-\eps\int_{\R^6}
\frac{|\nabla_x f_{k}|^2 + |\nabla_v f_{k}|^2}{f_{k}+\theta}
\dd{x} \dd{v} + \int_{\R^6}Q_\delta(f_{k})\log(1+f_{k}/\theta) \dd{x} \dd{v}.
\end{align}
Let us consider
\begin{align*}
\int_{\R^6}& Q_\delta(f_{k})\log(1+f_{k}/\theta) \dd{x} \dd{v} \\
=& 
-\int_{\R^6}\int_{\R^6}\nabla_v\log(f_{k}+\theta)\cdot 
N_\delta(v-v^*)\kappa_\delta(x-x^*)(f_{k}+\theta)(f_{k}^*+\theta)\\
&\qquad \times (
\nabla_v\log(f_{k}+\theta) - \nabla_{v^*}\log(f_{k}^*+\theta))\dd{x}^* \dd{v}^* \dd{x} \dd{v} \\
&+\theta \int_{\R^6}\int_{\R^6}\nabla_v\log(f_{k}+\theta)\cdot 
N_\delta(v-v^*)\kappa_\delta(x-x^*)(
\nabla_v f_{k} - \nabla_{v^*}f_{k}^*)\dd{x}^* \dd{v}^* \dd{x} \dd{v}.
\end{align*}
Standard symmetry argument yields
\begin{align*}
\int_{\R^6}& Q_\delta(f_{k})\log(1+f_{k}/\theta) \dd{x} \dd{v} \\
	=& 
-\frac{1}{2}\int_{\R^6}\int_{\R^6}(
\nabla_v\log(f_{k}+\theta) - \nabla_{v^*}\log(f_{k}^*+\theta))\cdot 
	N_\delta(v-v^*)\kappa_\delta(x-x^*)(f_{k}+\theta)(f_{k}^*+\theta)\\
	&\qquad \times (
	\nabla_v\log(f_{k}+\theta) - \nabla_{v^*}\log(f_{k}^*+\theta))\dd{x}^* \dd{v}^* \dd{x} \dd{v} \\
	&+\theta \int_{\R^6}\int_{\R^6}\nabla_v\log(f_{k}+\theta)\cdot 
	N_\delta(v-v^*)\kappa_\delta(x-x^*)(
	\nabla_v f_{k} - \nabla_{v^*}f_{k}^*)\dd{x}^* \dd{v}^* \dd{x} \dd{v}.
\end{align*}
The first integral is nonpositive, therefore we only need to deal with the error term:
\begin{align*}
\theta &\int_{\R^6}\int_{\R^6}\nabla_v\log(f_{k}+\theta)\cdot 
N_\delta(v-v^*)\kappa_\delta(x-x^*)(\nabla_v f_{k} - \nabla_{v^*}f_{k}^*)\dd{x}^* \dd{v}^* \dd{x} \dd{v}\\
&=4\theta \int_{\R^6}\int_{\R^6}
\frac{\nabla_{v}\sqrt{f_{k}+\theta}}{\sqrt{f_{k}+\theta}}\cdot 
N_\delta(v-v^*)\kappa_\delta(x-x^*)\\
&\qquad \times (
\sqrt{f_{k}+\theta}\nabla_v \sqrt{f_{k}+\theta} - \sqrt{f_{k}^*+\theta}\nabla_{v^*}\sqrt{f_{k}^*+\theta})\dd{x}^* \dd{v}^* \dd{x} \dd{v}\\
&=4\theta\int_{\R^6}\int_{\R^6}\nabla_v \sqrt{f_{k}+\theta}\cdot N_\delta(v-v^*)\kappa_\delta(x-x^*)\nabla_{v}\sqrt{f_{k}+\theta} \dd{x}^* \dd{v}^* \dd{x} \dd{v}\\
&-4\theta \int_{\R^6}\int_{\R^6} \sqrt{
\frac{f_{k}^*+\theta}{f_{k}+\theta}}\nabla_v \sqrt{f_{k}+\theta}\cdot
N_\delta(v-v^*)\kappa_\delta(x-x^*)\nabla_{v^*}\sqrt{f_{k}^*+\theta} \dd{x}^* \dd{v}^* \dd{x} \dd{v}\\
&\equiv  E_{1,\theta} + E_{2,\theta}.
\end{align*}
The first error term can be rewritten as
\begin{align*}
E_{1,\theta} = 4\theta \int_{\R^6}\nabla_v \sqrt{f_{k}+\theta}\cdot \Lambda_\delta \sqrt{f_{k}+\theta}\dd{x} \dd{v},
\end{align*}
with $\Lambda_\delta = \int_{\R^6}N_\delta(w)\kappa_\delta(y)dy dw$ and therefore can be estimated as follows
\begin{align*}
E_{1,\theta}\leq \theta C(\delta)\|\nabla_v \sqrt{f_{k}+\theta}\|_{L^2(\R^3\times\R^3)}^2.
\end{align*}
We use H\"older and Young inequalities in the second error term:
\begin{align*}
E_{2,\theta} &\leq 4\sqrt{\theta}
\int_{\R^6}|\nabla_v \sqrt{f_{k}+\theta}|  |N_\delta(v-v^*)|\kappa_\delta(x-x^*)\sqrt{f_{k}^*+\theta}|\nabla_{v^*}\sqrt{f_{k}^*+\theta}|\dd{x}^* \dd{v}^* \dd{x} \dd{v}\\
&= 4\sqrt{\theta}
\int_{\R^6}|\nabla_v \sqrt{f_{k}+\theta}|  |N_\delta(v-v^*)|\kappa_\delta(x-x^*)(\sqrt{f_{k}^*+\theta} - \sqrt{\theta})|\nabla_{v^*}\sqrt{f_{k}^*+\theta}|\dd{x}^* \dd{v}^* \dd{x} \dd{v}\\
&+4\theta
\int_{\R^6}|\nabla_v \sqrt{f_{k}+\theta}|  |N_\delta(v-v^*)|\kappa_\delta(x-x^*)|\nabla_{v^*}\sqrt{f_{k}^*+\theta}|\dd{x}^* \dd{v}^* \dd{x} \dd{v}\\
&\leq 4\sqrt{\theta}\|\nabla_{v}\sqrt{f_{k}+\theta}\|_{L^2(\R^3\times\R^3)}\|N_\delta\kappa_\delta\|_{L^2(\R^3\times\R^3)}
\|(\sqrt{f_{k}+\theta} - \sqrt{\theta})|\nabla_{v}\sqrt{f_{k}+\theta}|\|_{L^1(\R^3\times\R^3)}\\
&\quad +4\theta \|\nabla_{v}\sqrt{f_{k}+\theta}\|_{L^2(\R^3\times\R^3)}^2\|N_\delta\kappa_\delta\|_{L^1(\R^3\times\R^3)}\\
&\leq \theta C(\delta)\|\nabla_{v}\sqrt{f_{k}+\theta}\|_{L^2(\R^3\times\R^3)}^2\left(
1 + \|\sqrt{f_{k}+\theta} - \sqrt{\theta}\|_{L^2(\R^3\times\R^3)}\right).
\end{align*}
Since $|\sqrt{f_{k}+\theta} - \sqrt{\theta}|\leq \sqrt{f_{k}}$ we deduce via mass conservation that
\begin{align*}
E_{1,\theta} + E_{2,\theta}\leq \theta C(\delta)\|\nabla_{v}\sqrt{f_{k}+\theta}\|_{L^2(\R^3\times\R^3)}^2.
\end{align*}
We are now able to take the limit $\theta\to 0$ in \eqref{app.Sdelta} and obtain the desired discrete entropy inequality:
\begin{align}\label{app.S}
\tau^{-1}&(S(f_{k})-S(f_{k-1}))\\ \nonumber
& +4\eps\int_{\R^6}(
|\nabla_x\sqrt{f_{k}}|^2 + |\nabla_v \sqrt{f_{k}}|^2)\dd{x} \dd{v} \\ \nonumber
&+\frac{1}{2}\int_{\R^6}\int_{\R^6}(
\nabla_v\log(f_{k}) - \nabla_{v^*}\log(f_{k}^*))\cdot 
N_\delta(v-v^*)\kappa_\delta(x-x^*)f_{k}f_{k}^*\\ \nonumber
&\qquad \times (
\nabla_v\log(f_{k}) - \nabla_{v^*}\log(f_{k}^*))\dd{x}^* \dd{v}^* \dd{x} \dd{v} \le 0.  \nonumber
\end{align}
It is easy to see that mass and first momentum are conserved:
\begin{align}
\int_{\R^6}
\begin{pmatrix}
1\\ v
\end{pmatrix}
f_k \dd{x} \dd{v} = 
\int_{\R^6}
\begin{pmatrix}
	1\\ v
\end{pmatrix}
f_{k-1} \dd{x} \dd{v} \qquad\forall k\geq 1.
\end{align}
The second moment of $f_k$ is controlled: 
\begin{align}\label{app.mom.1}
\frac{1}{\tau}\int_{\R^6}|v|^2 f_k \dd{x} \dd{v} - \frac{1}{\tau}\int_{\R^6}|v|^2 f_{k-1} \dd{x} \dd{v} - 2d\eps\int_{\R^6}f_k \dd{x} \dd{v} = 
\int_{\R^6}|v|^2 Q_\delta(f_k,f_k)\dd{x} \dd{v}.
\end{align}
Our goal is to bound the term on the right-hand side. An integration by parts and a symmetry argument yield
\begin{align*}
&\int_{\R^6}|v|^2 Q_\delta(f_k,f_k)\dd{x} \dd{v}	\\
&=-2\int_{\R^6}\int_{\R^6}v\cdot N_\delta(v-v^*)\kappa_\delta(x-x^*)(f_k^*\nabla_v f_k - f_k\nabla_{v^*} f_k^*)\dd x^*\dd v^* \dd x\dd v\\
&=-\int_{\R^6}\int_{\R^6}(v-v^*)\cdot N_\delta(v-v^*)\kappa_\delta(x-x^*)(f_k^*\nabla_v f_k - f_k\nabla_{v^*} f_k^*)\dd x^*\dd v^* \dd x\dd v\\
&=-\int_{\R^6}\int_{\R^6}(v-v^*)\cdot N_\delta(v-v^*)\kappa_\delta(x-x^*)f_k^* f_k(\nabla_v \log f_k - \nabla_{v^*} \log f_k^*)\dd x^*\dd v^* \dd x\dd v.
\end{align*}
Since
\begin{align*}
N_\delta(w)w = e^{-\delta |w|^2} (\delta + |w|^2)^{\gamma/2}\eps w,
\end{align*}
we have
\begin{align*}
&\int_{\R^6}|v|^2 Q_\delta(f_k,f_k)\dd{x} \dd{v}	\\
&=-\int_{\R^6}\int_{\R^6}\delta (\delta + |v-v^*|^2)^{\gamma/2}e^{-\delta |v-v^*|^2}\kappa_\delta(x-x^*)f_k f_k^* (v-v^*)\cdot (\nabla_v \log f_k - \nabla_{v^*} \log f_k^*)\dd x^*\dd v^* \dd x\dd v\\
&\leq \frac{1}{2}\int_{\R^6}\int_{\R^6}\delta e^{-\delta |v-v^*|^2}(\delta + |v-v^*|^2)^{\gamma/2}\kappa_\delta(x-x^*)f_k f_k^* |v-v^*|^2 \dd x^*\dd v^* \dd x\dd v\\
&\quad + \frac{1}{2}\int_{\R^6}\int_{\R^6}\delta e^{-\delta |v-v^*|^2}(\delta + |v-v^*|^2)^{\gamma/2}\kappa_\delta(x-x^*)f_k f_k^*|\nabla_v \log f_k - \nabla_{v^*} \log f_k^*|^2 \dd x^*\dd v^* \dd x\dd v.
\end{align*}
On the other hand, for any $\xi\in\R^3$ it holds
\begin{align*}
\xi\cdot N_\delta(w)\xi = e^{-\delta |w|^2}(\delta + |w|^2)^{\gamma/2+1}\left(
|\xi|^2 - \frac{(\xi\cdot w)^2}{\delta + |w|^2}
\right)\geq |\xi|^2 e^{-\delta |w|^2}(\delta + |w|^2)^{\gamma/2}\delta .
\end{align*}
The above computation and \eqref{app.S} lead to the estimate
\begin{align*}
&\tau^{-1}(S(f_{k})-S(f_{k-1}))\\ \nonumber
& +4\eps\int_{\R^6}(
|\nabla_x\sqrt{f_{k}}|^2 + |\nabla_v \sqrt{f_{k}}|^2)\dd{x} \dd{v} \\ \nonumber
&+\frac{1}{2}\int_{\R^6}\int_{\R^6}e^{-\delta |v-v^*|^2}
\delta (\delta + |v-v^*|^2)^{\gamma/2}\kappa_\delta(x-x^*)f_{k}f_{k}^*
|\nabla_v\log(f_{k}) - \nabla_{v^*}\log(f_{k}^*)|^2 \dd{x}^* \dd{v}^* \dd{x} \dd{v} \\ \nonumber
&\leq 0.
\end{align*}
Summing the last two inequalities leads to
\begin{align*}
&\tau^{-1}(S(f_{k})-S(f_{k-1}))
+\eps\int_{\R^6}(
|\nabla_x\sqrt{f_{k}}|^2 + |\nabla_v \sqrt{f_{k}}|^2)\dd{x} \dd{v} 
+\int_{\R^6}|v|^2 Q_\delta(f_k,f_k)\dd{x} \dd{v}\\
&\leq \frac{1}{2}\int_{\R^6}\int_{\R^6}e^{-\delta |v-v^*|^2}\delta |v-v^*|^2 (\delta + |v-v^*|^2)^{\gamma/2}\kappa_\delta(x-x^*)f_k f_k^*  \dd x^*\dd v^* \dd x\dd v.
\end{align*}
We distinguish two cases. \medskip\\ 
{\em Case 1: $0\leq\gamma\leq 2$.}
In this case we estimate 
$$(\delta + |v-v^*|^2)^{\gamma/2}\leq (1 + |v-v^*|^2)^{\gamma/2}\leq 1 + |v-v^*|^2
\leq 1 + 2|v|^2 + 2|v^*|^2,
$$ and $e^{-\delta |v-v^*|^2}\delta |v-v^*|^2\leq C$, and obtain
\begin{align*}
	&\tau^{-1}(S(f_{k})-S(f_{k-1}))
	+\eps\int_{\R^6}(
	|\nabla_x\sqrt{f_{k}}|^2 + |\nabla_v \sqrt{f_{k}}|^2)\dd{x} \dd{v} 
	+\int_{\R^6}|v|^2 Q_\delta(f_k,f_k)\dd{x} \dd{v}\\
	&\leq C\int_{\R^6}\int_{\R^6}(1 + |v|^2 + |v^*|^2)\kappa_\delta(x-x^*)f_k f_k^*\dd x^*\dd v^* \dd x\dd v\\
	&\leq C_1 + C_2\int_{\R^6}|v|^2 f_k \dd x\dd v.
\end{align*}
{\em Case 2: $-3\leq\gamma<0$.} 
In this case it holds
$$
e^{-\delta |v-v^*|^2}\delta |v-v^*|^2(\delta + |v-v^*|^2)^{\gamma/2}\leq e^{-\delta |v-v^*|^2}\delta |v-v^*|^2(2\sqrt{\delta}|v-v^*|)^{\gamma/2}\leq C,
$$
hence
\begin{align*}
	&\tau^{-1}(S(f_{k})-S(f_{k-1}))
	+\eps\int_{\R^6}(
	|\nabla_x\sqrt{f_{k}}|^2 + |\nabla_v \sqrt{f_{k}}|^2)\dd{x} \dd{v} 
	+\int_{\R^6}|v|^2 Q_\delta(f_k,f_k)\dd{x} \dd{v}\\
	&\leq C\int_{\R^6}\int_{\R^6}\kappa_\delta(x-x^*)f_k f_k^*  \dd x^*\dd v^* \dd x\dd v\\
	&\leq C.
\end{align*}
Therefore in both cases we obtain
\begin{align}\label{app.S.1}
&\tau^{-1}(S(f_{k})-S(f_{k-1}))
+\eps\int_{\R^6}(
|\nabla_x\sqrt{f_{k}}|^2 + |\nabla_v \sqrt{f_{k}}|^2)\dd{x} \dd{v} 
+\int_{\R^6}|v|^2 Q_\delta(f_k,f_k)\dd{x} \dd{v}\\
&\leq C_1 + C_2\int_{\R^6}|v|^2 f_k \dd x\dd v. \nonumber
\end{align}
Define the quantity
\begin{align}\label{H}
\mathcal{H}(f)\equiv S(f) + \int_{\R^6}|v|^2 f \dd x\dd v.
\end{align}
 $\mathcal{H}$ is bounded from below for every $f\in L^1\log L(\R^3\times\R^3)$. Summing \eqref{app.S.1} and \eqref{app.mom.1} yields
\begin{align}\label{app.H}
&\tau^{-1}(\mathcal{H}(f_{k})-\mathcal{H}(f_{k-1}))
+\eps\int_{\R^6}(
|\nabla_x\sqrt{f_{k}}|^2 + |\nabla_v \sqrt{f_{k}}|^2)\dd{x} \dd{v} 
\leq C_1 + C_2\mathcal{H}(f_{k}). 
\end{align}

\subsection*{Limit $\tau\to 0$.}
At this point taking the limit $\tau\to 0$ in \eqref{1.app} is standard. Indeed, 
we first introduce the piecewise constant-in-time function
\begin{align*}
f^{(\tau)}(\cdot,t) = f_0 + \sum_{k=1}^{N}f_k{\bf 1}_{(t_{k-1},t_k]}(t),\quad 
N = \lfloor T/\tau \rfloor,\quad t_k = k\tau,
\end{align*}
and rewrite \eqref{1.app}, \eqref{app.S}, and \eqref{app.H} in terms of $f^{(\tau)}$, obtaining
\begin{align*}
D_\tau f^{(\tau)} + e^{-\delta|v|^2}v\cdot \nabla_x f^{(\tau)} - \eps(\Delta_x + \Delta_v)f^{(\tau)} = Q_\delta(f^{(\tau)},f^{(\tau)})\quad x, v\in\R^3,~~t\in (0,T],
\end{align*}
with $D_\tau g(t)\equiv \tau^{-1}(g(t)-g(t-\tau))$,
\begin{align}\label{app.2.S}
	&S(f^{(\tau)}(t))+4\eps\int_0^t\int_{\R^6}(
	|\nabla_x\sqrt{f^{(\tau)}}|^2 + |\nabla_v \sqrt{f^{(\tau)}}|^2)\dd{x} \dd{v} \dd t'\\ \nonumber
	&+\frac{1}{2}\int_0^t\int_{\R^6}\int_{\R^6}(
	\nabla_v\log(f^{(\tau)}) - \nabla_{v^*}\log((f^{(\tau)})^*))\cdot 
	N_\delta(v-v^*)\kappa_\delta(x-x^*)f^{(\tau)}(f^{(\tau)})^*\\ \nonumber
	&\qquad \times (
	\nabla_v\log(f^{(\tau)}) - \nabla_{v^*}\log((f^{(\tau)})^*))\dd{x}^* \dd{v}^* \dd{x} \dd{v}
	\dd t' \leq S(f_0) , \nonumber\\
\label{app.2.H} 
	&\mathcal{H}(f^{(\tau)}(t))-\mathcal{H}(f_{0})
	+\eps\int_{0}^t\int_{\R^6}(
	|\nabla_x\sqrt{f^{(\tau)}}|^2 + |\nabla_v \sqrt{f^{(\tau)}}|^2)\dd{x} \dd{v} \dd t'\\ \nonumber
	& \leq C_1 t + C_2\int_0^t\mathcal{H}(f^{(\tau)})\dd t',
\end{align}
implying via Gronwall's Lemma the bounds
\begin{align}
	\label{app.est.H}
&\sup_{t\in [0,T]}\mathcal{H}(f^{(\tau)}(t))\leq C(T),\\
\label{app.est.eps}
&\eps\int_{0}^T\int_{\R^6}(
|\nabla_x\sqrt{f^{(\tau)}}|^2 + |\nabla_v \sqrt{f^{(\tau)}}|^2)\dd{x} \dd{v} \dd t
\leq C(T),\\
\label{app.est.coll}
&\int_0^T\int_{\R^6}\int_{\R^6}(
\nabla_v\log(f^{(\tau)}) - \nabla_{v^*}\log((f^{(\tau)})^*))\cdot 
N_\delta(v-v^*)\kappa_\delta(x-x^*)f^{(\tau)}(f^{(\tau)})^*\\ \nonumber
&\qquad \times (
\nabla_v\log(f^{(\tau)}) - \nabla_{v^*}\log((f^{(\tau)})^*))\dd{x}^* \dd{v}^* \dd{x} \dd{v} \dd t\leq C(T).
\end{align}
From \eqref{app.est.eps} and Sobolev's embedding it follows that $\sqrt{f^{(\tau)}}$ is bounded in $L^2(0,T; L^{2^*}(\R^3\times \R^3))$ with $2^* = \frac{2d}{d-2}>2$, while mass conservation implies that $\sqrt{f^{(\tau)}}$ is bounded in $L^\infty(0,T; L^{2}(\R^3\times \R^3))$. Hence an interpolation argument yields a $\tau-$uniform (but $(\eps,\delta)-$dependent) bound for $\sqrt{f^{(\tau)}}$ in $L^p(\R^3\times\R^3\times [0,T])$ for some $p>2$. In turn this implies a $\tau-$uniform bound for $\nabla_{v}f^{(\tau)}$, $\nabla_{x}f^{(\tau)}$ in $L^q(\R^3\times\R^3\times [0,T])$ with $q = \frac{2p}{p+2}>1$. These bounds yield in a straightforward way a control for the regularized collision operator $Q_\delta(f^{(\tau)},f^{(\tau)})$ in $L^r_t L^r_x W^{-1,r}_v$ for some $r>1$.
The resulting bound for the discrete time derivative $D_\tau f^{(\tau)}$, the estimates for $\nabla_{v}f^{(\tau)}$, $\nabla_{x}f^{(\tau)}$, the second moment bound and \cite[Lemma 1]{CGZ20} allow us to apply Aubin-Lions Lemma in the version of \cite[Thr.~2]{CJL14} and deduce, up to subsequences:
\begin{align*}
f^{(\tau)}\to f\quad \mbox{strongly in }L^q(\R^3\times\R^3\times [0,T]),
\end{align*}
as well as the weak convergence relations
\begin{align*}
\nabla_x f^{(\tau)}\to \nabla_x f\quad \mbox{weakly in }L^q(\R^3\times\R^3\times [0,T]),\\
\nabla_v f^{(\tau)}\to \nabla_v f\quad \mbox{weakly in }L^q(\R^3\times\R^3\times [0,T]).
\end{align*}
These relations and the previous bounds allow us to take the limit $\tau\to 0$ in \eqref{1.app}, \eqref{app.2.S} and obtain
\begin{align}\label{3.app}
\pa_t f + e^{-\delta|v|^2}v\cdot \nabla_x f - \eps(\Delta_x + \Delta_v)f = Q_\delta(f,f)\quad x, v\in\R^3,~~t\in (0,T],
\end{align}
and
\begin{align}
	\label{app.3.S}
	S(f(t)) &+ \;4\eps\int_0^t\int_{\R^6}(
	|\nabla_x\sqrt{f}|^2 + |\nabla_v \sqrt{f}|^2)\dd{x} \dd{v} \dd t'\\ \nonumber
	&+ \frac{1}{2}\int_0^t\int_{\R^6}\int_{\R^6}(
	\nabla_v\log(f) - \nabla_{v^*}\log(f^*))\cdot 
	N_\delta(v-v^*)\kappa_\delta(x-x^*)f f^*\\ \nonumber
	&\qquad \times (
	\nabla_v\log(f) - \nabla_{v^*}\log(f^*))\dd{x}^* \dd{v}^* \dd{x} \dd{v}
	\dd t' \\ \nonumber
	\leq &S(f_0),
\end{align}
as well as 
\begin{align}
\label{app.3.est.H}
&\sup_{t\in [0,T]}\mathcal{H}(f(t))\leq C(T).
\end{align}
The solution $f$ has the regularity
\begin{align}\label{app.f.reg}
f(1+|v|^2),~ f\log f\in L^\infty(0,T; L^1(\R^3\times\R^3)),\quad 
\sqrt{f}\in L^2(0,T; H^1(\R^3\times\R^3)). 
\end{align}

\subsection*{Limit $\delta\to 0$.}
Our next goal will be to take the limit $\delta\to 0$ in \eqref{3.app}.
We start by noticing that \eqref{app.3.S} yields the following bound
\begin{align}\label{Mdelta.bound}
\|M_\delta(v-v^*)\kappa_\delta(x-x^*)(\nabla_v - \nabla_{v^*})\sqrt{f_\delta f_\delta^*}\|_{L^2(\R^6\times\R^6\times (0,T))}\leq C,
\end{align}
where
\begin{align*}
M_\delta(w)=N_\delta(w)^{1/2}=(\delta+|w|^2)^{\frac{\gamma+2}{4}}\left(
\eye - \left(
1 - \frac{\sqrt{\delta}}{\sqrt{\delta + |w|^2}}
\right)\frac{w\otimes w}{|w|^2}\right)
\end{align*}
is the square root of $N_\delta$ in the sense of positive definite symmetric matrices. Putting the above-written bound together with mass conservation yields
\begin{align*}
\|M_\delta(v-v^*)\kappa_\delta(x-x^*)(\nabla_v - \nabla_{v^*})(f_\delta f_\delta^*)\|_{L^1(\R^6\times\R^6\times (0,T))}\leq C.
\end{align*}
Furthermore, bound \eqref{Mdelta.bound} and the $L^1\log L$ estimate for $f_\delta$ implies the equi-integrability of $M_\delta(v-v^*)\kappa_\delta(x-x^*)(\nabla_v - \nabla_{v^*})(f_\delta f_\delta^*)$, meaning that (via Dunford-Pettis theorem) up to subsequences $M_\delta(v-v^*)\kappa_\delta(x-x^*)(\nabla_v - \nabla_{v^*})(f_\delta f_\delta^*)$ is weakly convergent in $L^1(\R^6\times\R^6\times (0,T))$. The weak convergences $f_\delta\rightharpoonup f$, $\nabla f_\delta\rightharpoonup\nabla f $ resulting from the $L^1\log L$ bound for $f_\delta$ and the $L^2$ bound for $\nabla\sqrt{f_\delta}$ as well as the pointwise convergence $M_\delta(w)\to M(w)=N(w)^{1/2}$ for $w\in \mathbb{R}^3$ (as $\gamma +2 >0$) imply
\begin{align*}
N_\delta(v-v^*)\kappa_\delta(x-x^*)(\nabla_v - \nabla_{v^*})(f_\delta f_\delta^*)\rightharpoonup
N(v-v^*)\kappa(x-x^*)(\nabla_v - \nabla_{v^*})(f f^*),
\end{align*}
in $L^1(\{|v-v^*|<\rho\}),~~\forall\rho>0$.

At this point it is easy to pass to the limit in {the weak formulation of} \eqref{3.app} for every {compactly supported} smooth test function $\phi$

\begin{align*}
	&\int_0^T\int_{\R^6}\phi Q_\delta(f_\delta,f_\delta) \dd x\dd v \dd t\\
	&{= -\int_0^T\int_{\R^6}\int_{\R^6}
		\nabla_{v}\phi \cdot N_\delta(v-v^*)\kappa_\delta(x-x^*)
		(f_\delta^*\nabla_{v}f_\delta - f_\delta\nabla_{v^*}f_\delta^*)\dd x^*\dd v^* \dd x\dd v \dd t}\\
	&{\to
	-\int_0^T\int_{\R^6}\int_{\R^6}
	\nabla_{v}\phi \cdot N(v-v^*)\kappa(x-x^*)
	(f^*\nabla_{v}f - f\nabla_{v^*}f^*)\dd x^*\dd v^* \dd x\dd v \dd t}.
\end{align*}

{Furthermore, mass and second moment conservation properties imply that $|v-v^*|f_\delta f_\delta^*$ is bounded in $L^\infty(0,T; L^2(\R^6\times\R^6))$; this fact, \eqref{Mdelta.bound} and the assumption $\gamma\in (-2,0]$ imply via Fatou's Lemma that 
$$N(v-v^*)\kappa(x-x^*)
(f^*\nabla_{v}f - f\nabla_{v^*}f^*)\in L^2(0,T; L^1(\R^6\times\R^6))$$ 
for every $T>0$.}


Hence after taking the limit $\delta\to 0$ in \eqref{3.app} and \eqref{app.3.S} we conclude
\begin{align}\label{app.4}
\pa_t f + v\cdot \nabla_x f - \eps(\Delta_x + \Delta_v)f = Q(f,f)\quad x, v\in\R^3,~~t\in (0,T],
\end{align}
and 
\begin{align}
	\label{app.4.S}
	&S(f(t)) + \eps\int_0^t\int_{\R^6}(
	|\nabla_x\sqrt{f}|^2 + |\nabla_v \sqrt{f}|^2)\dd{x} \dd{v} \dd t'\\ \nonumber
	&+\frac{1}{2}\int_0^t\int_{\R^6}\int_{\R^6}(
	\nabla_v\log(f) - \nabla_{v^*}\log(f^*))\cdot 
	N(v-v^*)\kappa(x-x^*)f f^*\\ \nonumber
	&\qquad \times (
	\nabla_v\log(f) - \nabla_{v^*}\log(f^*))\dd{x}^* \dd{v}^* \dd{x} \dd{v}
	\dd t' \\ \nonumber
	&\leq S(f_0),
\end{align}
while the solution $f$ to \eqref{app.4.S} conserves mass, momentum and energy.

\subsection*{Limit $\eps\to 0$.} We perform here the final limit $\eps\to 0$.
We aim at finding a bound for $\int_{\R^6}|x|^2 f_\eps \dd x\dd v$. This is easily done by testing \eqref{app.4} against $|x|^2$:
\begin{align*}
\frac{d}{dt}\int_{\R^6}|x|^2 f_\eps \dd x\dd v &= -2\int_{\R^6}v\cdot x f_\eps\dd x \dd v + 2d\eps\int_{\R^6}f_\eps \dd x\dd v\\
&\leq \int_{\R^6}(|x|^2 + |v|^2)f_\eps \dd x\dd v + 2d\eps \int_{\R^6}f_\eps  \dd x\dd v,
\end{align*}
hence mass and second moment conservation lead to
\begin{align*}
\int_{\R^6}|x|^2 f_\eps(t) \dd x\dd v\leq e^t \int_{\R^6}|x|^2 f_0 \dd x\dd v
+ C e^t,
\end{align*}
with the constant $C$ above depending on mass and second moment of $f_\eps$.

The uniform bounds for $f_\eps\log f_\eps$, $f_\eps(1+|x|^2+|v|^2)$ in $L^1(\R^3\times\R^3\times (0,T))$ imply the weak compactness of $f_\eps$ in $L^1(\R^3\times\R^3\times (0,T))$.

Furthermore, by arguing in the same way as in the limit $\delta\to 0$, one obtains the weak convergence
\begin{align*}
{N(v-v^*)\kappa(x-x^*)
	(f_\eps^*\nabla_{v}f_\eps - f_\eps\nabla_{v^*}f_\eps^*)\rightharpoonup
	N(v-v^*)\kappa(x-x^*)
	(f^*\nabla_{v}f - f\nabla_{v^*}f^*) 
}\\
{\mbox{weakly in }L^1(\{|v-v^*|<\rho\})\quad\forall\rho>0}
\end{align*}
hence $Q(f_\eps,f_\eps)\to Q(f,f)$ in a suitable distributional sense. 
{Furthermore, as in the previous limit $\delta\to 0$, we can deduce via Fatou's Lemma that 
$$N(v-v^*)\kappa(x-x^*)
(f^*\nabla_{v}f - f\nabla_{v^*}f^*)\in L^2(0,T; L^1(\R^6\times\R^6))$$ 
for every $T>0$.}
We conclude the global-in-time existence of H solutions to \eqref{1} satisfying the entropy balance
\begin{align}
	\label{entr.bal}
	&S(f(t)) +\frac{1}{2}\int_0^t\int_{\R^6}\int_{\R^6}(
	\nabla_v\log(f) - \nabla_{v^*}\log(f^*))\cdot 
	N(v-v^*)\kappa(x-x^*)f f^*\\ \nonumber
	&\qquad \times (
	\nabla_v\log(f) - \nabla_{v^*}\log(f^*))\dd{x}^* \dd{v}^* \dd{x} \dd{v}
	\dd t' \\ \nonumber
	&\leq S(f_0).
\end{align}

This is summarized in the following
\begin{theo}[Global existence of $H$ solutions]\label{thm.ex}
Let $f^{in}\in L^1(\R^6, (1+|x|^2+|v|^2)\dd x\dd v)\cap L^1\log L(\R^6)$
be nonnegative.
For $\gamma \in (-2,0]$ equation \eqref{1} admits an $H$ solution $f\in L^\infty_{loc}(0,\infty; L^1(\R^6, (1+|x|^2+|v|^2)\dd x\dd v)\cap L^1\log L(\R^6))$ conserving mass, momentum and energy as well as satisfying the entropy balance \eqref{entr.bal} for every time $t>0$.
\end{theo}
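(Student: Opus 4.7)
The plan is to obtain the $H$-solution as a limit of a carefully regularized problem, taking three successive limits in parameters $\tau,\delta,\eps\to 0$. At the deepest level I would replace the singular Landau kernel $N$ by a bounded, rapidly decaying $N_\delta$ (roughly $(\delta+|w|^2)^{1+\gamma/2}$ with a Gaussian cutoff), truncate the spatial kernel by $\kappa_\delta(y)=\kappa(y)e^{-\delta|y|^2}$, add a small uniformly elliptic diffusion $-\eps(\Delta_x+\Delta_v)$ in both variables, and discretize in time with step $\tau$. The extra diffusion avoids vacuum and provides compactness, the $\delta$-regularization makes all kernels Schwartz so the collision operator maps $L^1$ smoothly into itself, and the implicit time-stepping turns the nonlinear evolution into a sequence of stationary problems.

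At each time step $t_k$, I would solve the elliptic problem
\begin{equation*}
\tau^{-1}(f_k-f_{k-1}) + e^{-\delta|v|^2} v\cdot\nabla_x f_k - \eps(\Delta_x+\Delta_v) f_k = Q_\delta(f_k,f_k)
\end{equation*}
by Leray--Schauder. Freezing the first argument of $Q_\delta$ gives a linear problem that is uniquely solvable in $H^1(\R^6)$ with an $H^1$ bound following from testing against $f$ and using that the quadratic form of $A_\delta[\tilde f]$ is nonnegative while $b_\delta[\tilde f]$ is bounded (using smoothness of $N_\delta,\kappa_\delta$). Mass conservation is obtained by testing against a cutoff $\eta_R\in H^1\cap L^1$ and letting $R\to\infty$; this yields the a~priori $L^1$ bound needed for the fixed point set to be bounded uniformly in the homotopy parameter.

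For the passage $\tau\to 0$, the key tool is a discrete entropy--moment inequality. Using $\log(f_k+\theta)$ as test function and then letting $\theta\to 0$ produces a discrete $H$-theorem with dissipation, while testing against $|v|^2$ produces a control on the second moment (the extra term coming from $\nabla_v\log(f_k)-\nabla_{v^\ast}\log(f_k^\ast)$ can be absorbed into the dissipation using the pointwise lower bound $\xi\cdot N_\delta\xi\ge\delta e^{-\delta|w|^2}(\delta+|w|^2)^{\gamma/2}|\xi|^2$). Combining them into $\mathcal{H}(f)=S(f)+\int|v|^2 f$ and applying a discrete Gronwall argument furnishes $\tau$-uniform bounds on $f^{(\tau)}\log f^{(\tau)}$, $|v|^2 f^{(\tau)}$ and $\nabla\sqrt{f^{(\tau)}}$ in $L^2_tL^2_{x,v}$. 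By interpolation and Sobolev embedding, $f^{(\tau)}$ is bounded in $L^p$ for some $p>1$; hence $Q_\delta(f^{(\tau)},f^{(\tau)})$ is controlled in $L^r_tL^r_xW^{-1,r}_v$, which, combined with the tightness produced by moments and the Aubin--Lions lemma of \cite{CJL14}, yields strong $L^1$ convergence and allows passage to the limit in the weak formulation.

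The limits $\delta\to 0$ and then $\eps\to 0$ follow the same template and are the main obstacle. The only terms requiring care are the two collision terms $f^\ast \nabla_v f$ and $f\,\nabla_{v^\ast} f^\ast$ in the weak formulation tested against a compactly supported $\phi$. Writing $N_\delta=M_\delta M_\delta$ with $M_\delta$ the positive square root, the entropy-dissipation bound (preserved through the limits) reads
\begin{equation*}
\|M_\delta(v-v^\ast)\kappa_\delta(x-x^\ast)(\nabla_v-\nabla_{v^\ast})\sqrt{f_\delta f_\delta^\ast}\|_{L^2}\le C,
\end{equation*}
and multiplying by $\sqrt{f_\delta f_\delta^\ast}$ gives an $L^1$ bound on $M_\delta(\nabla_v-\nabla_{v^\ast})(f_\delta f_\delta^\ast)$. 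Combined with the $L^1\log L$ bound on $f_\delta$, the de la Vall\'ee Poussin criterion and Dunford--Pettis provide weak $L^1$ compactness; together with the pointwise convergence $M_\delta\to N^{1/2}$ (available because $\gamma+2>0$) and the weak convergence of $\nabla f_\delta$, one identifies the limit on $\{|v-v^\ast|<\rho\}$. The complementary region $\{|v-v^\ast|\ge\rho\}$ is handled using $\gamma\in(-2,0]$ and the moments of $f$, with a final Fatou-type argument ensuring that $N(v-v^\ast)\kappa(x-x^\ast)(f^\ast\nabla_v f-f\nabla_{v^\ast}f^\ast)\in L^2_tL^1_{x,v,x^\ast,v^\ast}$. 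The $\eps\to 0$ step additionally requires a bound on $\int|x|^2 f_\eps$, which comes at once from testing \eqref{app.4} against $|x|^2$ and using the already-proved conservation of mass and energy. Passing to the limit in the approximate entropy inequality yields \eqref{entr.bal} and completes the proof.
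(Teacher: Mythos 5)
Your proposal is correct and mirrors the paper's proof almost step for step: the same regularized system \eqref{1.app}--\eqref{N.app} with the $e^{-\delta|v|^2}$-damped transport and $\eps$-Laplacian, Leray--Schauder with the frozen-argument linearization and $\eta_R$-cutoff for mass, the $\log(1+f_k/\theta)$ entropy test combined with the $|v|^2$ moment into $\mathcal{H}$, the Aubin--Lions argument via \cite{CJL14} for $\tau\to 0$, and the $M_\delta$ square-root plus Dunford--Pettis/Fatou scheme for $\delta\to 0$ and $\eps\to 0$. No material difference from the paper's construction.
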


\begin{rem}
Even thought our analysis concerns moderately soft potentials, one could extend the above theorem to $\gamma \in (-3,2]$ using the techniques developed in \cite{Duong_Li_2}. 
\end{rem}

\subsection{Regularity of solutions}
In this section we aim at proving higher regularity results for the $H$ solutions to \eqref{1}. We provide a formal argument for the $L^\infty$ bound and $H^N$ regularity for $f$. The procedure to make this argument precise and rigorous is rather standard and therefore we omit it for the sake of simplicity to avoid excessive technicalities. 

We consider the case $-2<\gamma\leq 0$, meaning either Maxwell molecules $\gamma=0$ or soft potentials $\gamma\in (-2,0)$. Our first goal consists in proving an $L^\infty$ bound for the solution.

\subsubsection{Boundedness of the solution.}

\begin{lemma}[Boundedness of $H-$solutions]\label{lem.Linf}
The $H-$solution $f$ of \eqref{1} with initial condition $f^{in}$, under assumption \eqref{hp.ic}, satisfies
\begin{align}\label{g.neg.Linf}
	\sup_{t\in [0,T]}\|f(t)\|_{L^\infty_{x,v}}\leq C(T)\|f^{in}\|_{L^\infty_{x,v}}\quad\forall \; T>0.
\end{align}
\end{lemma}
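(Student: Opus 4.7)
The plan is to rewrite \eqref{1} in non-divergence form and apply a parabolic maximum principle. Integrating by parts in $v^*$ in the collision operator and using the identity $\Div_v N(w)=-2|w|^\gamma w$ for the Landau kernel, one checks that
\begin{align*}
\pa_t f + v\cdot\nabla_x f = a_{ij}[f]\,\pa^2_{v_iv_j}f + |c[f]|\,f,
\end{align*}
where $a_{ij}[f]=(A[f])_{ij}$ and, after a second integration by parts using $\Div_v(\Div_v N)(w)=-2(\gamma+3)|w|^\gamma$,
\begin{align*}
c[f](x,v)=-2(\gamma+3)\int_{\R^3}|v-v^*|^\gamma H(x,v^*)\,\dd v^*\leq 0,
\end{align*}
with $H(x,v)=\int_{\R^3}\kappa(x-x^*)f(x^*,v)\,\dd x^*$ as in Lemma~\ref{lem.aA.2}. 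Note $c[f]\leq 0$ whenever $\gamma>-3$, so the $c[f]\,f$ contribution acts as a nonnegative source term.

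The next step is to establish $\sup_{t\in[0,T]}\|c[f](\cdot,t)\|_{L^\infty_{x,v}}\leq C(T)$. Splitting the integral defining $c[f]$ at $|v-v^*|=1$, the far-field contribution is controlled by $\|H(x,\cdot)\|_{L^1_v}$ via Lemma~\ref{lem.aA.2}(ii)-(iii), while the near-field contribution uses the local integrability of $|v-v^*|^\gamma$ for $\gamma>-3$ combined with the pointwise bound $H(x,v)\leq \|\kappa\|_\infty \rho_v(v)$, where $\rho_v(v)=\int f(x,v)\,\dd x$. A uniform bound on $\rho_v$ in turn follows by interpolating the current $L^\infty$ bound on $f$ with the moment estimate \eqref{mom.high}; the apparent circularity is closed by a standard continuation argument, letting $T_0=\sup\{t\geq 0:\|f(\cdot,t)\|_{L^\infty_{x,v}}<\infty\}$ and showing the bound propagates past $T_0$.

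Once $c[f]$ is controlled in $L^\infty$, evaluate the non-divergence form equation at a (regularized) maximum point of $f(\cdot,t)$: there $\nabla f=0$ so the transport term vanishes, while $A[f]$ positive semi-definite and $D_v^2 f$ non-positive definite force $a_{ij}\pa^2_{v_iv_j}f\leq 0$. One therefore obtains the differential inequality
\begin{align*}
\frac{d}{dt}\|f(\cdot,t)\|_{L^\infty_{x,v}} \leq \|c[f](\cdot,t)\|_{L^\infty}\,\|f(\cdot,t)\|_{L^\infty_{x,v}},
\end{align*}
and Gronwall's inequality yields \eqref{g.neg.Linf}. The main obstacle is the $L^\infty$ bound on $c[f]$: the moderately soft potential assumption $\gamma>-2$ ensures the singular kernel $|v-v^*|^\gamma$ is sufficiently tame for the small-ball estimate, and the continuation argument is what unlocks the otherwise circular dependence of the estimate on $\|f\|_{L^\infty}$. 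Alternatively, one could run an $L^p$ iteration with $p\to\infty$, using the coercivity \eqref{aA.lb} of $A[f]$ and Sobolev embedding in $v$ to gain integrability, but the maximum-principle route is the shortest formal path consistent with the rest of the section.
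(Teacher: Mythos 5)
Your non-divergence rewrite is correct: with $c[f]=\Delta_v a[f]=-2(\gamma+3)\int|v-v^*|^\gamma\kappa(x-x^*)f^*\,\dd x^*\dd v^*\le0$ one has $\pa_t f+v\cdot\nabla_x f=A[f]:D^2_v f+|c[f]|f$, and the paper indeed starts from the same source term $-\Delta a[f]$ in \eqref{g.neg}. However, the maximum-principle route you favor has a genuine gap that your continuation argument does not repair, and it is precisely the gap that forces the paper onto the $L^p$ energy method.

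The obstruction is the $L^\infty$ bound on $c[f]$. After splitting at $|v-v^*|=1$, the far field is fine (it is controlled by the mass of $H$), but the near field requires $\int_{|v-v^*|<1}|v-v^*|^\gamma H(x,v^*)\,\dd v^*$ to be bounded, which needs $H(x,\cdot)$ (equivalently $\rho_v(v)=\int f\,\dd x$) in some $L^p_v$ with $p>3/(3+\gamma)$. A priori one only has $L^1$ in $v$ from mass and $L^1\log L$ from Lemma~\ref{lem.aA.2}(iv), not $L^p$ with $p>1$. Interpolating $\rho_v$ against $\|f\|_{L^\infty}$, as you suggest, gives at best $\rho_v\lesssim\|f\|_\infty^\theta$ with $\theta\in(0,1)$, and hence a \emph{superlinear} differential inequality $\frac{d}{dt}\|f\|_\infty\lesssim\|f\|_\infty^{1+\theta}$. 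A continuation argument cannot upgrade this to a global bound: a Riccati-type inequality of this form is exactly the kind that permits finite-time blow-up, so ``propagating past $T_0$'' is unavailable without additional input. In other words, the circularity is not apparent but real, and the maximum principle discards exactly the structure (the diffusion) that would break it.

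The paper instead runs the $L^p$ energy estimate, testing against $f^{p-1}$ to obtain \eqref{g.neg}, and then crucially \emph{does not} throw away the coercive term $\int\nabla_v f^{p/2}\cdot A[f]\nabla_v f^{p/2}$. Using the lower bound \eqref{aA.lb}, H\"older in $v$, and Gagliardo--Nirenberg--Sobolev in $v$ (inequality \eqref{g.neg.2}), the singular near-field contribution in $-\int f^p\Delta a[f]$ is absorbed into that coercive term, closing a linear Gronwall estimate for $\|f\|_{L^p}$ at \emph{fixed} $p$ without any prior $L^\infty$ or higher-$L^q$ control. Once $f\in L^\infty_t L^p_{x,v}$ for all finite $p$, one can then (and only then) bound $\|\Delta a[f]\|_{L^\infty}\le C(T)$ as in \eqref{Delta.a.2}, after which the $L^p$ estimate becomes $\frac{d}{dt}\|f\|_p^p\le C(p,T)\|f\|_p^p$ with $C(p,T)/p$ bounded in $p$; sending $p\to\infty$ gives \eqref{g.neg.Linf}. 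So the $L^p$ iteration you mention in passing as an ``alternative'' is not an alternative but the necessary route: it is the Sobolev gain from the degenerate diffusion that unlocks the argument, and that gain is invisible to a pointwise maximum principle.
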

\begin{proof}
For a generic $p>1$ we employ $f^{p-1}$ as test function in \eqref{1}, obtaining after some straightforward integrations by parts
\begin{align}\label{g.neg}
\frac{d}{dt}\int_{\R^6}& f^p\dd x\dd v
+\frac{4(p-1)}{p}\int_{\R^6}
\nabla_{v}f^{p/2}\cdot A[f]\nabla_{v} f^{p/2}\dd x\dd v
=-(p-1)\int_{\R^6}f^{p}\Delta a[f]\dd x\dd v.
\end{align}
The second integral on the left-hand side can be controlled via \eqref{aA.lb} as follows
\begin{align}\label{g.neg.1}
\int_{\R^6}
\nabla_{v}f^{p/2}\cdot A[f]\nabla_{v} f^{p/2}\dd x\dd v\geq 
\alpha\int_{\R^6}\bk{x}^{-\lambda}
\bk{v}^{\gamma}|\nabla_{v}f^{p/2}|^2\dd x\dd v.
\end{align}
We turn now our attention to the integral on the right-hand side of \eqref{g.neg}:
\begin{align*}
	-\int_{\R^6} f^{p}\Delta a[f]\dd x\dd v 
	&= -\int_{\R^6}f^{p}\bk{v}^{\gamma}\bk{x}^{-\lambda}\, \bk{x}^{\lambda}\bk{v}^{-\gamma}\Delta a[f]\dd x\dd v.
\end{align*}
We observe that 
\begin{align}\label{Delta.a}
-\Delta a[f] &= 2(\gamma+d)\int_{\R^6}|v-w|^{\gamma}\kappa(x-y)f(y,w)\dd y\dd w\\ \nonumber
&\leq 2(\gamma+d)\kappa_2\int_{\R^6}|v-w|^{\gamma}\bk{x-y}^{-\lambda} f(y,w)\dd y\dd w .
\end{align}
We employ the straightforward relations
\begin{align*}
\bk{\xi}\leq \bk{\zeta} + |\xi-\zeta|,\quad 
\bk{\xi}\leq |\zeta| + \bk{\xi-\zeta},
\end{align*} 
which imply
\begin{align*}
\bk{x}^{\lambda}\leq C(\bk{x-y}^{\lambda} + |y|^{\lambda}),\quad 
\bk{v}^{-\gamma}\leq C(|v-w|^{-\gamma} + \bk{w}^{-\gamma}).
\end{align*}
Thus we obtain
\begin{align*}
-&\bk{x}^{\lambda}\bk{v}^{-\gamma}\Delta a[f] \\
&\leq
C\int_{\R^6}\left[
1 + |v-w|^{\gamma}\bk{w}^{-\gamma} + |y|^\lambda\bk{x-y}^{-\lambda} 
+ |v-w|^{\gamma}\bk{x-y}^{-\lambda}\bk{w}^{-\gamma}|y|^\lambda
\right]f(y,w)\dd y\dd w
\end{align*}
which leads to
\begin{align}\label{bad.1}
-\bk{x}^{\lambda}\bk{v}^{-\gamma}\Delta a[f] 
\leq C +
C\int_{\R^6}|v-w|^{\gamma}\bk{w}^{-\gamma}\bk{y}^\lambda
f(y,w)\dd y\dd w
\end{align}
thanks to mass conservation and \eqref{mom.high}. Consider now
\begin{align*}
&\int_{\R^6}|v-w|^{\gamma}\bk{w}^{-\gamma}\bk{y}^\lambda
f(y,w)\dd y\dd w \\
&= \int_{\R^{d}}
\int_{B_1(v)}|v-w|^{\gamma}\bk{w}^{-\gamma}f(y,w)\dd w~
\bk{y}^\lambda \dd y 
+ 
\int_{\R^{d}}
\int_{\R^{d}\backslash B_1(v)}|v-w|^{\gamma}\bk{w}^{-\gamma}f(y,w)\dd w~
\bk{y}^\lambda \dd y.
\end{align*}
Given that $m\geq \lambda - \gamma$,
the second integral can be estimated via Young's inequality as follows
\begin{align*}
\int_{\R^{d}}
\int_{\R^{d}\backslash B_1(v)}|v-w|^{\gamma}\bk{w}^{-\gamma}f(y,w)\dd w~
\bk{y}^\lambda \dd y
\leq C\int_{\R^6}
(1 + \bk{w}^\mm + |y|^\mm)f(y,w)\dd w \dd y,
\end{align*}
which is bounded thanks to \eqref{mom.high}. Hence \eqref{bad.1} becomes
\begin{align*}
-\bk{x}^{\lambda}\bk{v}^{-\gamma}\Delta a[f] 
\leq C +
C\int_{\R^{d}}
\int_{B_1(v)}|v-w|^{\gamma}\bk{w}^{-\gamma}f(y,w)\dd w~
\bk{y}^\lambda \dd y ,
\end{align*}
that is
\begin{align*}
	-\bk{x}^{\lambda}\bk{v}^{-\gamma}\Delta a[f] 
	\leq C +
	C\int_{\R^{d}}h(y,v) \bk{y}^\lambda \dd y,\quad 
h(y,v)\equiv \int_{B_1(v)}|v-w|^{\gamma}\bk{w}^{-\gamma}f(y,w)\dd w .
\end{align*}
It follows
\begin{align*}
-\int_{\R^6} & f^p \Delta a[f]\dd x\dd v\\
\leq &
C\int_{\R^6}f^p \bk{v}^\gamma\bk{x}^{-\lambda}\dd x\dd v + 
C\int_{\R^9}f(x,v)^p \bk{v}^\gamma\bk{x}^{-\lambda}h(y,v)\bk{y}^\lambda \dd x\dd y\dd v.
\end{align*}
H\"older inequality in the velocity variable yields
\begin{align*}
-\int_{\R^6} & f^p \Delta a[f]\dd x\dd v\\
\leq &
C\int_{\R^6}f^p \dd x\dd v + 
C\int_{\R^6}\|f(x,\cdot)^p \bk{\cdot}^\gamma\|_{L^q_v}\bk{x}^{-\lambda}
\|h(y,\cdot)\|_{L^{q/(q-1)}_v}\bk{y}^\lambda \dd x\dd y\\
=& 
C\int_{\R^6}f^p \dd x\dd v + 
C\left[\int_{\R^{d}}\|f(x,\cdot)^p \bk{\cdot}^\gamma\|_{L^q_v}\bk{x}^{-\lambda}\dd x\right]
\left[\int_{\R^{d}}\|h(y,\cdot)\|_{L^{q/(q-1)}_v}\bk{y}^\lambda\dd y\right],
\end{align*}
where $\frac{3}{3+\gamma}<q<3$ (remember that $\gamma>-2$).

Let us now estimate $h$. The bound
\begin{align*}
0\leq h(y,v)\leq \left(\int_{B_1(v)}|v-w|^{\gamma q/(q-1)}\bk{w}^{-\gamma}f(y,w)\dd w \right)^{1-\frac{1}{q}}\left(\int_{B_1(v)}\bk{w}^{-\gamma}f(y,w)\dd w\right)^{\frac{1}{q}}
\end{align*}
implies
\begin{align*}
\int_{\R^{d}}& |h(y,v)|^{q/(q-1)}\dd v\\
&\leq 
\left(\int_{\R^3}\bk{w}^{-\gamma}f(y,w)\dd w\right)^{\frac{1}{q-1}}
\int_{\R^{d}}\int_{B_1(v)}|v-w|^{\gamma q/(q-1)}\bk{w}^{-\gamma}f(y,w)\dd w\dd v\\
&=
\left(\int_{\R^3}\bk{w}^{-\gamma}f(y,w)\dd w\right)^{\frac{1}{q-1}}
\int_{\R^{d}}\int_{B_1(w)}|v-w|^{\gamma q/(q-1)}\dd v~ \bk{w}^{-\gamma}f(y,w)\dd w\\
&=C\left(\int_{\R^3}\bk{w}^{-\gamma}f(y,w)\dd w\right)^{\frac{1}{q-1}}
\int_{\R^{d}}\bk{w}^{-\gamma}f(y,w)\dd w,
\end{align*}
and therefore
\begin{align*}
\|h(y,\cdot)\|_{L^{q/(q-1)}} &\leq C 
\left(\int_{\R^3}\bk{w}^{-\gamma}f(y,w)\dd w\right)^{\frac{1}{q}}
\left(\int_{\R^{d}}\bk{w}^{-\gamma}f(y,w)\dd w
\right)^{\frac{q-1}{q}}\\
&= C \int_{\R^{d}}\bk{w}^{-\gamma}f(y,w)\dd w.
\end{align*}
This implies
\begin{align*}
\int_{\R^{d}}\|h(y,\cdot)\|_{L^{q/(q-1)}_v}\bk{y}^\lambda\dd y
\leq C \int_{\R^6}\bk{w}^{-\gamma}\bk{y}^{\lambda}f(y,w)\dd y\dd w\leq C
\end{align*}
once again via Young's inequality and \eqref{mom.high}.
We obtain
\begin{align*}
-\int_{\R^6} & f^{p}\Delta a[f]\dd x\dd v 
\leq C\int_{\R^6}f^p \dd x\dd v
+C\int_{\R^{d}}\|f^p(x,\cdot)\bk{\cdot}^\gamma\|_{L^q_v}\bk{x}^{-\lambda}\dd x .
\end{align*}
Let us now turn our attention to 
\begin{align*}
\|f^p(x,\cdot)\bk{\cdot}^\gamma\|_{L^q} = 
\|f^{p/2}(x,\cdot)\bk{\cdot}^{\gamma/2}\|_{L^{2q}}^2.
\end{align*}
Gagliardo-Nirenberg-Sobolev inequality leads to (remember that $q<3$)
\begin{align*}
\|f^p(x,\cdot)\bk{\cdot}^\gamma\|_{L^q}\leq 
C\|\nabla_v(f^{p/2}(x,\cdot)\bk{\cdot}^{\gamma/2})\|_{L^2}^{2\theta}
\|f^{p/2}(x,\cdot)\bk{\cdot}^{\gamma/2}\|_{L^2}^{2(1-\theta)}
\end{align*}
with $1/q = \theta/6 + (1-\theta)/2 = 1/2 -\theta/3$. Young's inequality allow us to deduce
\begin{align*}
\|f^p(x,\cdot)\bk{\cdot}^\gamma\|_{L^q}\leq &
\epsilon\|\nabla_v(f^{p/2}(x,\cdot)\bk{\cdot}^{\gamma/2})\|_{L^2}^{2} + 
C(\epsilon)\|f^{p/2}(x,\cdot)\bk{\cdot}^{\gamma/2}\|_{L^2}^{2}\\
\leq & \epsilon\|\bk{\cdot}^{\gamma/2}\nabla_v f^{p/2}(x,\cdot)\|_{L^2}^{2} + 
C(\epsilon)\int_{\R^{d}}f^p(x,v)\dd v,
\end{align*}
therefore
\begin{align}\label{g.neg.2}
-\int_{\R^6} f^{p}\Delta a[f]\dd x\dd v 
\leq C(\epsilon)\int_{\R^6}f^p \dd x\dd v 
+ \epsilon\int_{\R^6}|\nabla_{v}f^{p/2}|^2 \bk{v}^{\gamma}\bk{x}^{-\lambda}\dd x\dd v.
\end{align}
Putting \eqref{g.neg}, \eqref{g.neg.1}, \eqref{g.neg.2} together and choosing 
$\epsilon = \frac{2\alpha}{p}$ leads to
\begin{align}\label{g.neg.3}
	\frac{d}{dt}\int_{\R^6}& f^p\dd x\dd v
	+\frac{2(p-1)}{p}\alpha\int_{\R^6}
	|\nabla_{v}f^{p/2}|^2 \bk{v}^\gamma\bk{x}^{-\lambda}\dd x\dd v
	\leq C(p)\int_{\R^6} f^p\dd x\dd v.
\end{align}
Gronwall's Lemma yields the bound
\begin{align*}
\sup_{t\in [0,T]}\|f(t)\|_{L^p_{x,v}}\leq C(p,T)\|f^{in}\|_{L^p_{x,v}}\quad\forall \; p>1,~~ T>0.
\end{align*}
It is now easy to see that $f$ is bounded. Indeed, given that $f\in L^\infty(0,T; L^p(\R^3\times\R^3))$ for every $p\in [1,\infty)$, one easily estimates $\Delta a[f]$ via \eqref{Delta.a} and deduces
\begin{align}\label{Delta.a.2}
\|\Delta a[f]\|_{L^\infty(\R^3\times\R^3\times (0,T))}\leq C(T)\qquad \forall \; T>0,
\end{align}
hence \eqref{g.neg} implies
\begin{align*}
\frac{d}{dt}\int_{\R^6}f^p\dd x\dd v\leq C(p,T) \int_{\R^6}f^p\dd x\dd v.
\end{align*}
We now note that $C(p,T)/p$ remains bounded (for fixed $T$) as $p\to \infty$. In this case we may apply Gronwall's Lemma to bound $\|f(t)\|_p$ for $t\leq T$ for every $p$, and taking $p\to\infty$ we obtain \eqref{g.neg.Linf}. This finishes the proof.
\end{proof}
A useful consequence of the above result is the following corollary.
\begin{cor}\label{cor.Linf}
Assume the exponent $\mm$ in \eqref{hp.ic} satisfies $\mm\geq 12$.
Then it holds
\begin{align}\label{Da.decay}
|\nabla_{v}a[f]|\leq C\bk{v}^{\gamma+1},\quad 
|\nabla_{v}^2 a[f]|\leq C\bk{v}^{\gamma}. 
\end{align}
\end{cor}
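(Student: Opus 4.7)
The plan is to represent $a[f]$ as a Newton-type velocity convolution, differentiate under the integral, and reduce both estimates in \eqref{Da.decay} to a classical scalar bound of the form $\int|v-v^*|^\alpha\rho(v^*)\dd v^*\lesssim\bk{v}^\alpha$ for the $x$-marginal $\rho(v^*):=\int f(x^*,v^*)\dd x^*$.

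The first step is to identify $a[f]$ concretely. From \eqref{Delta.a} and the identity $-\Delta_v|v|^{\gamma+2}=-(\gamma+2)(\gamma+d)|v|^\gamma$ one reads off the natural Newtonian representation
$$a[f](x,v)=-\tfrac{d-1}{\gamma+2}\int|v-v^*|^{\gamma+2}\kappa(x-x^*)f(x^*,v^*)\dd x^*\dd v^*.$$
Differentiating under the integral sign and using $\kappa\leq\|\kappa\|_\infty$ reduces \eqref{Da.decay} to showing
$J_\alpha(v):=\int|v-v^*|^\alpha\rho(v^*)\dd v^*\leq C\bk{v}^\alpha$ for $\alpha\in\{\gamma,\gamma+1\}$. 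The properties of $\rho$ that I will use are: $\|\rho\|_{L^1}=\rho_0$ from mass conservation, $\int\bk{v^*}^\mm\rho\dd v^*\leq C$ from Lemma \ref{ss.moments}, and $\rho\in L^{(3+\mm)/3}(\R^3)$. The last inclusion follows by splitting $|x^*|\lessgtr R$ in the definition of $\rho$, using $\|f\|_\infty<\infty$ from Lemma \ref{lem.Linf} together with the $|x|^\mm$-moment of $f$, and optimizing in $R$.

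The key scalar bound $J_\alpha\leq C\bk{v}^\alpha$ will be done by cases. For $\alpha\in[0,1]$ (covering $\gamma=0$ and $\gamma+1\geq 0$), subadditivity $|v-v^*|^\alpha\leq C(\bk{v}^\alpha+\bk{v^*}^\alpha)$ combined with mass and low moments of $\rho$ gives it at once. For $\alpha\in(-2,0)$ I would split $\R^3=B\cup B^c$ with $B:=B_{\bk{v}/2}(v)$. On $B^c$ one has $|v-v^*|^\alpha\leq C\bk{v}^\alpha$, contributing at most $C\bk{v}^\alpha\rho_0$. On $B$ one has $\bk{v^*}\geq c\bk{v}$, so the moment bound yields $\|\rho\|_{L^1(B)}\leq C\bk{v}^{-\mm}$; interpolating this with the global bound $\|\rho\|_{L^{(3+\mm)/3}}\leq C$ gives $\|\rho\|_{L^{q'}(B)}\leq C\bk{v}^{-\theta\mm}$ for a suitable $\theta=\theta(q')\in[0,1]$. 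A H\"older inequality together with $\| |v-\cdot|^\alpha\|_{L^q(B)}=C\bk{v}^{\alpha+3/q}$ (valid whenever $\alpha q>-3$), and a choice of $q$ making $3/q\leq\theta\mm$, then produces $\int_B|v-v^*|^\alpha\rho\dd v^*\leq C\bk{v}^\alpha$.

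The main obstacle is the inner-region estimate for $\alpha<0$: integrability of $|v-v^*|^{\alpha q}$ over $B$ forces $q<3/|\alpha|$, while extracting enough moment decay to cancel the $\bk{v}^{3/q}$ factor forces $q$ large. A direct computation of the boundary case $3/q=\theta\mm$ yields $q=(6+\mm)/\mm$, so the admissible window for $q$ is nonempty precisely when $|\alpha|<3\mm/(6+\mm)$; for $\mm\geq 12$ this is $|\alpha|<2$, which is met strictly since $\gamma\in(-2,0]$ implies both $|\gamma|<2$ and $|\gamma+1|\leq 1<2$. Combining the three ingredients then yields \eqref{Da.decay}.
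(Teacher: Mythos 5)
Your proposal is correct, but it follows a genuinely different route from the paper's own proof. The paper also begins with $|\nabla_v a[f]|\leq C\int|v-w|^{\gamma+1}f(y,w)\,\dd y\,\dd w$ (using $\kappa\leq C$), but it stays in the joint $(y,w)$ variables: it splits the $w$-integral over the \emph{fixed-radius} ball $B_1(v)$ vs.\ its complement, handles the far region by a $\bk{w}^{-\gamma}\bk{y}^\lambda$ Young inequality absorbed into the $\mm$-th moment, and on $B_1(v)$ inserts a weight $\bk{y}^{-\xi}\bk{y}^{\xi}$ and applies H\"older in $w$ (with exponent $q=\tfrac{3}{3-\Theta-\eta}$ chosen near $3/\Theta$) and then in $y$. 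The constraint that emerges is $q(\xi+\Theta)=\tfrac{6(\Theta+\eta)}{3-(\Theta+\eta)}\leq\mm$; letting $\Theta=-\gamma\to 2^-$ yields the threshold $\mm\geq 12$. Your argument instead \emph{marginalizes in $x$} to $\rho(v^*)=\int f\,\dd x^*$, first establishes the auxiliary integrability $\rho\in L^{(3+\mm)/3}$ (via $R$-optimization between $\|f\|_\infty$ and the $|x|^\mm$-moment — a step the paper does not need), then splits in the velocity variable over the \emph{growing} ball $B_{\bk v/2}(v)$ and interpolates $L^1$--$L^{(3+\mm)/3}$ before applying H\"older. You land on the window $|\alpha|<\tfrac{3\mm}{6+\mm}$, which at $\mm=12$ gives $|\alpha|<2$ and thus reproduces the same threshold. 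Both proofs use the same three ingredients (Lemma~\ref{lem.Linf}, Lemma~\ref{ss.moments}, H\"older with a near-critical exponent) and arrive at $\mm\geq 12$; yours is arguably cleaner because it reduces to a one-variable convolution bound, at the modest cost of establishing the intermediate fact $\rho\in L^{(3+\mm)/3}$. One small point worth spelling out if this were written up: the interpolation exponent $\theta=\tfrac{3}{6+\mm}$ does indeed give $q'=\tfrac{6+\mm}{6}\leq\tfrac{3+\mm}{3}$, so the $L^{q'}$ norm is controlled by the two endpoints; and the strict inequality $\gamma>-2$ is needed at $\mm=12$ since the window $|\alpha|<\tfrac{3\mm}{6+\mm}=2$ is open.
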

\begin{proof}
It holds
\begin{align*}
|\nabla_{v}a[f]|\leq C\int_{\R^6}\kappa(x-y)|v-w|^{\gamma+1}f(y,w)\dd y\dd w
\leq C \int_{\R^6}|v-w|^{\gamma+1}f(y,w)\dd y\dd w,
\end{align*}
thanks to \eqref{kappa}. If $\gamma+1\geq 0$ then the desired estimate is trivial. Let us assume $-2<\gamma<-1$.
We employ the relation $\bk{v}^{-\gamma-1}\leq C(|v-w|^{-\gamma-1} + \bk{w}^{-\gamma-1})$ and consider
\begin{align*}
\bk{v}^{-\gamma-1}|\nabla_{v}a[f]|\leq C + 
C\int_{\R^6} |v-w|^{\gamma+1}\bk{w}^{-\gamma-1}f(y,w)\dd y\dd w,
\end{align*}
thanks to mass conservation. It follows
\begin{align*}
	\bk{v}^{-\gamma-1}|\nabla_{v}a[f]|\leq C + 
	C\int_{\R^3} \int_{B_1(v)}|v-w|^{\gamma+1}\bk{w}^{-\gamma-1}f(y,w),
	\dd w ~ \dd y,
\end{align*}
via mass and second moment conservation. Define 
$\Theta = -(\gamma+1)\in (0,1)$. Choose an arbitrary $\eta>0$, small enough, and define the parameters
\begin{align*}
q = \frac{3}{3-\Theta-\eta},\quad \xi = \Theta + 2\eta.
\end{align*}
We can choose $\eta>0$ such that $\xi < 3$.
Let us estimate
\begin{align*}
\int_{B_1(v)} & |v-w|^{-\Theta}\bk{w}^{\Theta}f(y,w)\dd w\\
\leq &\|f(y,\cdot)\|_{L^\infty_v}^{(q-1)/q}
\int_{B_1(v)}|v-w|^{-\Theta}\bk{w}^{\Theta}f(y,w)^{1/q}\dd w\\
\leq &\|f(y,\cdot)\|_{L^\infty_v}^{(q-1)/q}
\left(\int_{B_1(v)}|v-w|^{-\frac{q\Theta}{q-1}}\dd w\right)^{\frac{q-1}{q}}
\left(
\int_{B_1(v)}\bk{w}^{q\Theta}f(y,w)\dd w
\right)^{1/q}.
\end{align*}
We point out that $q>\frac{3}{3-\Theta}$ hence $\frac{q}{q-1}<\frac{3}{\Theta}$, meaning that $w\mapsto |v-w|^{-\frac{q\Theta}{q-1}}$ is locally integrable. This yields
\begin{align*}
\int_{B_1(v)} |v-w|^{-\Theta}\bk{w}^{\Theta}f(y,w)\dd w
\leq C\|f(y,\cdot)\|_{L^\infty_v}^{(q-1)/q}
\left(
\int_{B_1(v)}\bk{w}^{q\Theta}f(y,w)\dd w
\right)^{1/q}.
\end{align*}
Integrating the above inequality in $y$ leads to
\begin{align*}
\int_{\R^3}\int_{B_1(v)} & |v-w|^{-\Theta}\bk{w}^{\Theta}f(y,w)\dd w\dd y\\
\leq & C\|f\|_{L^\infty_{x,v}}^{(q-1)/q}
	\int_{\R^3}\left(
	\int_{B_1(v)}\bk{w}^{q\Theta}f(y,w)\dd w
	\right)^{1/q}\dd y\\
\leq & C\|f\|_{L^\infty_{x,v}}^{(q-1)/q}
\int_{\R^3}\bk{y}^{-\xi} \bk{y}^\xi\left(
\int_{\R^3}\bk{w}^{q\Theta}f(y,w)\dd w
\right)^{1/q}\dd y\\
\leq & C\|f\|_{L^\infty_{x,v}}^{(q-1)/q}
\left(\int_{\R^3}\bk{y}^{-\frac{\xi q}{q-1}}\dd y \right)^{\frac{q-1}{q}}
\left(
\int_{\R^6} \bk{y}^{\xi q}\bk{w}^{\Theta q}f(y,w)\dd y\dd w
\right)^{1/q}.
\end{align*}
It holds $q < \frac{3}{3-\Theta-2\eta} = \frac{3}{3-\xi}$ hence
$\frac{q}{q-1}>\frac{3}{\xi}$. We deduce
\begin{align}\label{Da.decay.1}
\int_{\R^3}\int_{B_1(v)}  |v-w|^{-\Theta}\bk{w}^{\Theta}f(y,w)\dd w\dd y\leq 
C\|f\|_{L^\infty_{x,v}}^{(q-1)/q}\left(
\int_{\R^6} \bk{y}^{\xi q}\bk{w}^{\Theta q}f(y,w)\dd y\dd w
\right)^{1/q}.
\end{align}
It holds
\begin{align*}
q(\xi + \Theta) = \frac{6(\Theta + \eta)}{3-(\Theta+\eta)}.
\end{align*}
Since $s\in (0,3)\mapsto \frac{6s}{3-s}\in (0,\infty)$ is increasing and $\Theta = -(\gamma+1)<1$, we can choose $\eta>0$ small enough so that $\Theta+\eta<1$, hence
$q(\Theta+\eta)\leq 3\leq \mm$. Therefore Young's inequality yields
\begin{align*}
\int_{\R^3}\int_{B_1(v)}  |v-w|^{-\Theta}\bk{w}^{\Theta}f(y,w)\dd w\dd y\leq 
C\|f\|_{L^\infty_{x,v}}^{(q-1)/q}\left(
\int_{\R^6} ( \bk{y}^\mm + \bk{w}^\mm )f(y,w)\dd y\dd w
\right)^{1/q},
\end{align*}
which is bounded thanks to \eqref{mom.high}. We conclude that the first estimate in \eqref{Da.decay} holds.

The strategy of proof for the second estimate in \eqref{Da.decay} is exactly the same until \eqref{Da.decay.1}.
In this case we will have \eqref{Da.decay.1} with $\Theta$ replaced by $\tilde\Theta = -\gamma \in (0,2)$. From here we get the constraint  $\mm\geq 12$.  
\end{proof}

\subsubsection{$H^s$ regularity.}
We now study the regularity of $f$ in Sobolev spaces. We already know that
\begin{align*}
f\in L^\infty(0,T; L^1\cap L^\infty(\R^3\times\R^3)),\quad \bk{v}^{\gamma/2}f\in 
L^2(0,T; H^1(\R^3\times\R^3)).
\end{align*}
Let's recall that the initial datum satisfies \eqref{hp.ic}  and 
\begin{align}\label{Hkreg.ic}
\bk{v}^{s-|\alpha| - |\beta|} D_v^{\alpha} D^{\beta}_x f^{in}\in L^2(\R^3\times\R^3)\quad\forall \alpha, \beta\in \N_0^d ~ \mbox{such that} ~ |\alpha|+|\beta| \leq s .
\end{align}
We aim at showing that
\begin{align}\label{Hkreg.goal}
	\bk{v}^{s-|\alpha| - |\beta|} D_v^{\alpha} D^{\beta}_x f\in L^\infty(0,T; L^2(\R^3\times\R^3)), \nonumber \\
	 \bk{v}^{s+\gamma/2 -|\alpha| - |\beta|} D_v^{\alpha} D^{\beta}_x \nabla_{v}f\in  
	 L^2(0,T; L^2(\R^3\times\R^3)) 
\end{align}
for every choice of $\alpha, \beta\in \N_0^d$ with $|\alpha|+|\beta|\leq s$.

We begin with  $|\alpha|=0$.
Apply $D_x^\beta$ to \eqref{1}. We get
\begin{align}\label{Hkreg.1}
\pa_t D_x^\beta f + v\cdot\nabla_{x}D_x^\beta f = 
\Div_v(
A[f]\nabla_{v}D_x^\beta f - D_x^\beta f \, \nabla_{v}a[f])
+ E^{0,\beta},
\end{align}
with the lower order terms given by
\begin{align*}
E^{0,\beta} = \sum_{\beta'\leq \beta,\, \beta'\neq\beta}
\binom{\beta}{\beta'}\Div_v\left(
D_x^{\beta-\beta'} A[f]\, 
D_x^{\beta'}\nabla_{v} f
- D_x^{\beta-\beta'} \nabla_{v}a[f]\, 
D_x^{\beta'} f
\right).
\end{align*}
Use now $\bk{v}^{2m}D_x^\beta f$ as test function in \eqref{Hkreg.1} for $m\ge 0$ arbitrary. We obtain
\begin{align*}
\frac{1}{2}\frac{d}{dt}&\int_{\R^6}|D^\beta_x f|^2 \bk{v}^{2m}\dd x\dd v + 
\int_{\R^6}\nabla_{v}D_x^\beta f\cdot A[f]\nabla_{v}D_x^\beta f \bk{v}^{2m}\dd x\dd v \\
\leq &-\frac{1}{2}\int_{\R^6}|D^\beta_x f|^2\Delta_v a[f]\bk{v}^{2m}\dd x\dd v
+\int_{\R^6}E^{0,\beta}D_x^\beta f \bk{v}^{2m}\dd x\dd v\\
&-2m\int_{\R^6}\bk{v}^{2m-2}v \cdot D^\beta_x f (
A[f]\nabla_{v}D_x^\beta f - D_x^\beta f \, \nabla_{v}a[f])\dd x\dd v .
\end{align*}
From \eqref{Delta.a.2} we deduce
\begin{align}\label{Hkreg.2}
\frac{d}{dt}\int_{\R^6} & |D^\beta_x f|^2 \bk{v}^{2m}\dd x\dd v + 
\int_{\R^6}\nabla_{v}D_x^\beta f\cdot A[f]\nabla_{v}D_x^\beta f \bk{v}^{2m}\dd x\dd v \\
\leq & C\int_{\R^6}|D^\beta_x f|^2 \bk{v}^{2m}\dd x\dd v
+2\int_{\R^6}E^{0,\beta}D_x^\beta f \bk{v}^{2m}\dd x\dd v\nonumber\\
\nonumber
&-2m\int_{\R^6}\bk{v}^{2m-2}v \cdot D^\beta_x f 
A[f]\nabla_{v}D_x^\beta f \dd x\dd v \\
\nonumber
&+2m\int_{\R^6}\bk{v}^{2m-2}v \cdot (D^\beta_x f)^2 \nabla_{v}a[f]\dd x\dd v .
\end{align}
Let us deal with the last two integrals. Young's inequality yields
\begin{align*}
-2m\int_{\R^6}&\bk{v}^{2m-2}v \cdot D^\beta_x f 
A[f]\nabla_{v}D_x^\beta f \dd x\dd v \\
\leq & \delta \int_{\R^6}\nabla_{v}D_x^\beta f\cdot A[f]\nabla_{v}D_x^\beta f \bk{v}^{2m}\dd x\dd v 
+ C(\delta)m^2\int_{\R^6}(D_x^\beta f)^2\bk{v}^{2m-4}v\cdot A[f]v\,\dd x\dd v,
\end{align*}
thus \eqref{aA.ub} leads to
\begin{align*}
-2m\int_{\R^6}&\bk{v}^{2m-2}v \cdot D^\beta_x f 
A[f]\nabla_{v}D_x^\beta f \dd x\dd v \\
\leq & \delta \int_{\R^6}\nabla_{v}D_x^\beta f\cdot A[f]\nabla_{v}D_x^\beta f \bk{v}^{2m}\dd x\dd v 
+ C(\delta)m^2\int_{\R^6}(D_x^\beta f)^2\bk{v}^{2m+\gamma}\dd x\dd v	\\
\leq & \delta \int_{\R^6}\nabla_{v}D_x^\beta f\cdot A[f]\nabla_{v}D_x^\beta f \bk{v}^{2m}\dd x\dd v 
+ C(\delta)m^2\int_{\R^6}(D_x^\beta f)^2\bk{v}^{2m}\dd x\dd v .
\end{align*}
Bound \eqref{aA.ub} implies also
\begin{align*}
2m\int_{\R^6}&\bk{v}^{2m-2}v \cdot (D^\beta_x f)^2 \nabla_{v}a[f]\dd x\dd v \\
\leq & Cm\int_{\R^6}(D_x^\beta f)^2\bk{v}^{2m+\gamma}\dd x\dd v
\leq C m^2 \int_{\R^6}(D_x^\beta f)^2\bk{v}^{2m}\dd x\dd v.
\end{align*}
Plugging the last two inequalities in \eqref{Hkreg.2} and choosing $\delta>0$ small enough leads to
\begin{align}\label{Hkreg.3}
\frac{d}{dt}\int_{\R^6} & |D^\beta_x f|^2 \bk{v}^{2m}\dd x\dd v + \frac{1}{2}
\int_{\R^6}\nabla_{v}D_x^\beta f\cdot A[f]\nabla_{v}D_x^\beta f \bk{v}^{2m}\dd x\dd v \\
\leq & C_m\int_{\R^6}|D^\beta_x f|^2 \bk{v}^{2m}\dd x\dd v
+2\int_{\R^6}E^{0,\beta}D_x^\beta f \bk{v}^{2m}\dd x\dd v.\nonumber
\end{align}
We consider now the contribution of the lower order terms:
\begin{align*}
&\int_{\R^6} E^{0,\beta}D_x^\beta f \bk{v}^{2m}\dd x\dd v \\
&= \nonumber
-\sum_{\substack{\beta'\leq \beta,\\ \beta'\neq\beta}}
\binom{\beta}{\beta'}
\int_{\R^6}\nabla_{v}D_x^\beta f\cdot \left(
D_x^{\beta-\beta'} A[f]\, 
\nabla_{v}D_x^{\beta'} f
- D_x^{\beta-\beta'} \nabla_{v}a[f]\, 
D_x^{\beta'} f
\right)\bk{v}^{2m}\dd x\dd v\\
\nonumber
&-2m\sum_{\substack{\beta'\leq \beta,\\ \beta'\neq\beta}}
\binom{\beta}{\beta'}
\int_{\R^6}D_x^\beta f \bk{v}^{2m-2}v\cdot \left(
D_x^{\beta-\beta'} A[f]\, 
\nabla_{v}D_x^{\beta'} f
- D_x^{\beta-\beta'} \nabla_{v}a[f]\, 
D_x^{\beta'} f
\right)\dd x\dd v.
\end{align*}
Consider first the integral
\begin{align*}
&\left|\int_{\R^6} \nabla_{v}D_x^\beta f\cdot D_x^{\beta-\beta'} A[f]\, 
\nabla_{v}D_x^{\beta'} f\bk{v}^{2m}\dd x\dd v \right|\\
&= \left|\int_{\R^6} A[f]^{1/2}\nabla_{v}D_x^\beta f\cdot \left(A[f]^{-1/2}D_x^{\beta-\beta'} A[f]\,A[f]^{-1/2}\right) 
A[f]^{1/2}\nabla_{v}D_x^{\beta'} f\bk{v}^{2m}\dd x\dd v\right|\\
&\leq \delta\int_{\R^6}|A[f]^{1/2}\nabla_{v}D_x^\beta f|^2\bk{v}^{2m}\dd x\dd v\\
&\qquad +C(\delta)\|A[f]^{-1/2}D_x^{\beta-\beta'} A[f]\,A[f]^{-1/2}\|_{L^\infty(\R^3\times\R^3)}
\int_{\R^6}|A[f]^{1/2}\nabla_{v}D_x^{\beta'} f|^2 \bk{v}^{2m}\dd x\dd v\\
&= \delta\int_{\R^6}
\nabla_{v}D_x^\beta f\cdot A[f]\nabla_{v}D_x^\beta f\bk{v}^{2m}\dd x\dd v\\
&\qquad +C(\delta)\|A[f]^{-1/2}D_x^{\beta-\beta'} A[f]\,A[f]^{-1/2}\|_{L^\infty(\R^3\times\R^3)}
\int_{\R^6}\nabla_{v}D_x^{\beta'} f\cdot A[f]\nabla_{v}D_x^{\beta'} f \bk{v}^{2m}\dd x\dd v.
\end{align*}
Let us consider ($C_\beta$ as in \eqref{Hkreg.kappa})
\begin{align*}
C_\beta A[f] - D_x^\beta A[f] = \int_{\R^6}N(v-w)(C_\beta \kappa(x-y) - D_x^\beta \kappa(x-y))f(y,w)\dd y\dd w.
\end{align*}
Since \eqref{Hkreg.kappa} holds, it follows
\begin{align*}
0\leq \xi\cdot (C_\beta A[f] - D_x^\beta A[f])\xi = 
C_\beta \xi\cdot A[f]\xi - \xi\cdot D_x^\beta A[f] \xi \quad\forall\xi\in\R^3,
\end{align*}
hence
\begin{align*}
\xi \cdot A[f]^{-1/2}\, D_x^\beta A[f]\, A[f]^{-1/2}\xi \leq C_\beta |\xi|^2 \quad\forall\xi\in\R^3,
\end{align*}
implying the boundedness of $A[f]^{-1/2}\, D_x^\beta A[f]\, A[f]^{-1/2}$. We deduce
\begin{align*}
&\left|\int_{\R^6} \nabla_{v}D_x^\beta f\cdot D_x^{\beta-\beta'} A[f]\, 
\nabla_{v}D_x^{\beta'} f\bk{v}^{2m}\dd x\dd v\right|\\
&\leq \delta\int_{\R^6} \nonumber
\nabla_{v}D_x^\beta f\cdot A[f]\nabla_{v}D_x^\beta f\bk{v}^{2m}\dd x\dd v
+C(\delta) \int_{\R^6}\nabla_{v}D_x^{\beta'} f\cdot A[f]\nabla_{v}D_x^{\beta'} f \bk{v}^{2m}\dd x\dd v.
\end{align*}
We now turn our attention to another error contribution. From \eqref{aA.ub} we get
\begin{align*}
&\left|\int_{\R^6}\nabla_{v}D_x^\beta f\cdot 
D_x^{\beta-\beta'} \nabla_{v}a[f]\, 
D_x^{\beta'} f\bk{v}^{2m}\dd x\dd v\right|
\leq C\int_{\R^6}|\nabla_{v}D_x^\beta f| |D_x^{\beta'} f| \bk{v}^{2m+\gamma+1}\dd x\dd v.
\end{align*}
Young's inequality and \eqref{aA.lb} then yield
\begin{align*}
&\left|\int_{\R^6}\nabla_{v}D_x^\beta f\cdot 
D_x^{\beta-\beta'} \nabla_{v}a[f]\, 
D_x^{\beta'} f\bk{v}^{2m}\dd x\dd v\right|\\
&\leq 
\delta\int_{\R^6}|\nabla_{v}D_x^\beta f|^2\bk{v}^{2m+\gamma}\dd x\dd v
+C(\delta)\int_{\R^6}|D_x^{\beta'} f|^2 \bk{v}^{2m+\gamma+2}\dd x\dd v\\
&\leq 
\delta\int_{\R^6}\nabla_{v}D_x^\beta f\cdot A[f]\nabla_{v}D_x^\beta f \bk{v}^{2m}\dd x\dd v
+C(\delta)\int_{\R^6}|D_x^{\beta'} f|^2 \bk{v}^{2(m+1)}\dd x\dd v.
\end{align*}
Let us now consider
\begin{align*}
&\left|\int_{\R^6} D_x^\beta f \bk{v}^{2m-2}v\cdot 
D_x^{\beta-\beta'} A[f]\, \nabla_{v}D_x^{\beta'} f
\dd x\dd v\right|\\
&\leq \int_{\R^6}|D_x^\beta f| |\nabla_{v}D_x^{\beta'} f| \bk{v}^{2m+\gamma+1}\dd x\dd v\\
&\leq \delta\int_{\R^6}|\nabla_{v}D_x^{\beta'} f|^2 \bk{v}^{2m+\gamma}\dd x\dd v
+C(\delta)\int_{\R^6}|D_x^\beta f|^2 \bk{v}^{2m+\gamma+2}\dd x\dd v.
\end{align*}
From \eqref{aA.lb} we obtain
\begin{align*}
&\left|\int_{\R^6} D_x^\beta f \bk{v}^{2m-2}v\cdot 
D_x^{\beta-\beta'} A[f]\, \nabla_{v}D_x^{\beta'} f
\dd x\dd v\right|\\
&\leq \delta\int_{\R^6}\nabla_{v}D_x^{\beta'} f\cdot\nabla_{v}D_x^{\beta'}f \bk{v}^{2m}\dd x\dd v
+C(\delta)\int_{\R^6}|D_x^\beta f|^2 \bk{v}^{2(m+1)}\dd x\dd v.
\end{align*}
Finally we need to deal with
\begin{align*}
&\left|
\int_{\R^6}D_x^\beta f \bk{v}^{2m-2}v\cdot D_x^{\beta-\beta'} \nabla_{v}a[f]\, 
D_x^{\beta'} f \dd x\dd v\right|\\
&\leq \int_{\R^6}|D_x^\beta f| |D_x^{\beta'} f| \bk{v}^{2m+\gamma}\dd x\dd v\\
&\leq \int_{\R^6}|D_x^\beta f|^2 \bk{v}^{2m}\dd x\dd v
+\int_{\R^6}|D_x^{\beta'} f|^2 \bk{v}^{2m}\dd x\dd v.
\end{align*}
Using the previous estimates in \eqref{Hkreg.3} and choosing $\delta>0$ small enough lead to
\begin{align*}
\frac{d}{dt} & \int_{\R^6} |D^\beta_x f|^2 \bk{v}^{2m}\dd x\dd v + \frac{1}{4}
\int_{\R^6}\nabla_{v}D_x^\beta f\cdot A[f]\nabla_{v}D_x^\beta f \bk{v}^{2m}\dd x\dd v \\ \nonumber
\leq & C_m\int_{\R^6}|D^\beta_x f|^2 \bk{v}^{2m}\dd x\dd v\\ \nonumber
&+ C_{m,\beta}
\sum_{\substack{\beta'\leq \beta,\\ \beta'\neq\beta}}\left[
\int_{\R^6}\nabla_{v}D_x^{\beta'} f\cdot A[f]\nabla_{v}D_x^{\beta'} f \bk{v}^{2m}\dd x\dd v
+ \int_{\R^6}|D_x^{\beta'} f|^2 \bk{v}^{2(m+1)}\dd x\dd v
\right].
\end{align*}
If $|\beta| =0$ the above inequality reduces to 
$$
\frac{d}{dt}  \int_{\R^6} f^2 \bk{v}^{2m}\dd x\dd v  \le  C_m\int_{\R^6} f^2 \bk{v}^{2m}\dd x\dd v
$$
and Gronwall's Lemma yields 
\begin{align}\label{beta=0}
\sup_{t\in[0,T]} \int_{\R^6} f^2 \bk{v}^{2m}\dd x\dd v  \le  C(T),
\end{align}
 for any $m\le s$, using (\ref{Hkreg.ic}). In the next step we consider $|\beta| =1$ and obtain 
\begin{align*}
\frac{d}{dt} & \int_{\R^6} |D^\beta_x f|^2 \bk{v}^{2m}\dd x\dd v + \frac{1}{4}
\int_{\R^6}\nabla_{v}D_x^\beta f\cdot A[f]\nabla_{v}D_x^\beta f \bk{v}^{2m}\dd x\dd v \\ \nonumber
\leq & C_m\int_{\R^6}|D^\beta_x f|^2 \bk{v}^{2m}\dd x\dd v\\ \nonumber
&+ C_{m,\beta} \left[
 \|A[f]\|_{L^\infty} \int_{\R^6}  f^2 \bk{v}^{2m}\dd x\dd v
+ \int_{\R^6}f^2 \bk{v}^{2(m+1)}\dd x\dd v
\right].
\end{align*}
For any $m$ with $m+1 = m+ |\beta| \le s$, using (\ref{beta=0}) one obtains, 
$$
\sup_{t\in[0,T]} \int_{\R^6} |D^\beta_x f|^2 \bk{v}^{2m}\dd x\dd v  + \frac{1}{4}
\int_0^T \int_{\R^6}\nabla_{v}D_x^\beta f\cdot A[f]\nabla_{v}D_x^\beta f \bk{v}^{2m}\dd x\dd v \;dt \le C(T).
$$
At this point an induction argument on $|\beta|$ easily yield
\begin{align}\label{Hkreg.result.0}
\sup_{t\in [0,T]}\int_{\R^6}|D^{\beta}_x f(t)|^2 \bk{v}^{2m}\dd x\dd v
+\frac{1}{4}\int_0^T\int_{\R^6}\nabla_{v}D_x^\beta f\cdot A[f]\nabla_{v}D_x^\beta f \bk{v}^{2m}\dd x\dd v\dd t < \infty\\ \nonumber
\mbox{for every choice of $\beta\in \N_0^d,~~m\in\N$: $|\beta|+m\leq s$.}
\end{align}
In particular \eqref{Hkreg.goal} holds with $|\alpha|=0$.

Now we want to prove \eqref{Hkreg.goal} with $|\alpha|\neq 0$. This is done via induction on $|\alpha|$. The argument is very similar to the one used to arrive at \eqref{Hkreg.result.0}; we only show the differences. 

The starting point consists in differentiating \eqref{Hkreg.1} w.r.t.~$v_i$, $i=1,\ldots,d$ and then using $\bk{v}^{2m}\pa_{v_i}D^\beta_x f$ as test function, with $m\ge 0$ arbitrary. The resulting terms are bound similarly as the case $|\alpha|=0$.
We focus our attention on one of these terms:
\begin{align*}
&\left|\int_{\R^6}\nabla_{v}\pa_{v_i}D^\beta_x f\cdot \pa_{v_i}A[f]\, \nabla_{v}D_x^\beta f \bk{v}^{2m}\dd x\dd v\right|\\
&\leq 
C\int_{\R^6}|\nabla_{v}\pa_{v_i}D^\beta_x f| |\nabla_{v}D_x^\beta f|
\bk{v}^{2m+\gamma + 1}\dd x\dd v,
\end{align*}
where we used the bound
\begin{align*}
|\pa_{v_i}A[f]| &\leq \int_{\R^6}|\pa_{v_i}N(v-v^*)|
\kappa(x-x^*)f(x^*,v^*)\dd x^*\dd v^*\leq C\bk{v}^{\gamma+1}.
\end{align*}
Hence
\begin{align*}
&\left|\int_{\R^6}\nabla_{v}\pa_{v_i}D^\beta_x f\cdot \pa_{v_i}A[f]\, \nabla_{v}D_x^\beta f \bk{v}^{2m}\dd x\dd v\right|\\
&\leq 
\delta\int_{\R^6}|\nabla_{v}\pa_{v_i}D^\beta_x f|^2 
\bk{v}^{2m+\gamma}\dd x\dd v
+C(\delta)\int_{\R^6}|\nabla_{v}D_x^\beta f|^2
\bk{v}^{2m+\gamma+2}\dd x\dd v.
\end{align*}
The second integral in the last inequality is bounded in $L^1(0,T)$ given the inductive hypothesis, as it is controlled by testing \eqref{Hkreg.1} against $\bk{v}^{2(m+1)}D^\beta_x f$; in other words, it refers to the case $|\alpha|=0$.

With this strategy, the case $|\alpha|=1$ is thus easily carried out. 
For the case $|\alpha|\geq 2$ we estimate the derivative of $A[f]$ (and $a[f]$) slightly differently. Let $\eta$ be a cutoff such that $\eta\equiv 0$ outside $B_2$ and $\eta\equiv 1$ on $B_1$. We split
\begin{align*}
A[f] =& \int_{\R^6}N(v-w)\eta(v-w)\kappa(x-y)f(y,w)\dd y\dd w\\ 
&+ \int_{\R^6}N(v-w)(1-\eta(v-w))\kappa(x-y)f(y,w)\dd y\dd w\equiv A_1[f] + A_2[f].
\end{align*} 
We compute
\begin{align*}
|D^\theta_v D_x^\sigma A[f]| \leq & 
\int_{\R^6}|D_v^{\theta'}[N(v-w)\eta(v-w)]| |D_x^\sigma\kappa(x-y)|
|D_w^{\theta-\theta'}f(y,w)|\dd y\dd w\\
&+\int_{\R^6}|D_v^{\theta}[N(v-w)(1-\eta(v-w))]| |D_x^\sigma\kappa(x-y)|
f(y,w)\dd y\dd w,
\end{align*}
where $|\theta'|=1$, i.e.~$D_v^{\theta'} = \pa_{v_i}$ for some $i$. Hence
\begin{align*}
|D^\theta_v D_x^\sigma A[f]|\leq & C\|D_v^{\theta-\theta'}f\|_{L^2(\R^3\times\R^3)}+
C\bk{v}^{\gamma+1}\|\bk{v}f\|_{L^1(\R^3\times\R^3)}\\
\leq & C\|D_v^{\theta-\theta'}f\|_{L^2(\R^3\times\R^3)} + C\bk{v}^{\gamma+1}.
\end{align*}
Given that $|\theta-\theta'|<|\theta|$, the induction assumption ensures that $\|D_v^{\theta-\theta'}f\|_{L^2(\R^3\times\R^3)}\leq C$.

The transport term gives some contribution. For $|\alpha|=1$ ($D_v^\alpha = \pa_{v_i}$) it equals
\begin{align*}
\left|\int_{\R^6}\pa_{v_i}D_x^\beta f\, \pa_{x_i}D_x^\beta f\, \bk{v}^{2m}\dd x\dd v\right|\leq 
\int_{\R^6}|\pa_{v_i}D_x^\beta f|^2\bk{v}^{2m}\dd x\dd v
+
\int_{\R^6}|\pa_{x_i}D_x^\beta f|^2 \bk{v}^{2m}\dd x\dd v.
\end{align*}
The second integral on the right-hand side contains derivatives of order $|\beta|+1 = |\alpha|+|\beta|$ with respect to $x$ and no derivatives in $v$; by inductive assumption it is therefore controlled. In general, for $\alpha$ arbitrary, the aforementioned term will contain derivatives in $x$ of order $|\beta|+1$ and derivatives in $v$ of order $|\alpha|-1$, and will be bounded by inductive hypothesis.

One is then able to carry out all the necessary estimates and prove \eqref{Hkreg.goal}. We summarize the result in the following theorem. 
\begin{theo}[$H^s$ regularity]\label{thm.reg}
Let $f$ be a weak solution to the fuzzy Landau equation \eqref{1}. Let $s\ge 2$ a general integer
Assume that the kernel $\kappa$ is $C^s(\R^3)$ and it satisfies
\begin{align*}
	\forall\beta\in\N_0^d,~~|\beta|\leq s,~~\exists C_\beta>0: ~~ 
	|D_x^\beta\kappa|\leq C_\beta \kappa ~~\mbox{in }\R^3 .
\end{align*}
Furthermore assume that the initial datum fulfills
\begin{align*}
\bk{v}^{s-|\alpha| - |\beta|} D_v^{\alpha} D^{\beta}_x f^{in}\in L^2(\R^3\times\R^3)\quad\forall \alpha, \beta\in \mathbb{N}_0^d,~~ |\alpha|+|\beta| \leq s .
\end{align*}
Then
\begin{align*}
	\bk{v}^{s-|\alpha| - |\beta|}  D_v^{\alpha} D^{\beta}_x f\in L^\infty(0,T; L^2(\R^3\times\R^3)),\quad \bk{v}^{s+\frac{\gamma}{2} -|\alpha| - |\beta|} D_v^{\alpha} D^{\beta}_x \nabla_{v}f\in 
	L^2(0,T; L^2(\R^3\times\R^3))
\end{align*}
for every choice of $\alpha, \beta\in \N_0^d$ such that
$|\alpha|+|\beta| \leq s$.
\end{theo}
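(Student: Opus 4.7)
My plan is to close the estimate by a nested induction: an outer induction on $|\alpha|$ (the order of $v$-derivatives) and an inner induction on $|\beta|$ (the order of $x$-derivatives), combined with weighted energy estimates in which the coercivity of $A[f]$ from Lemma \ref{lem.aA.lb} provides the dissipation that absorbs the top-order error terms, while the pointwise upper bound $|A[f]|\leq C\bk{v}^{\gamma+2}$ together with Lemma \ref{ss.moments}, Lemma \ref{lem.Linf} and Corollary \ref{cor.Linf} handles the lower-order ones. The weight $\bk{v}^{2m}$ with $m=s-|\alpha|-|\beta|$ is designed so that every error contribution is either quadratic in the highest derivative (absorbable into the dissipation or a Gronwall term) or has fewer derivatives with higher weight (controlled by the induction step already performed).

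For the base case $|\alpha|=0$, I would apply $D_x^{\beta}$ to \eqref{1}, producing equation \eqref{Hkreg.1} with Leibniz remainder $E^{0,\beta}$, and test against $\bk{v}^{2m}D_x^{\beta}f$. The weight commutator terms coming from $\nabla_v\bk{v}^{2m}$ are bounded via \eqref{aA.ub} and Young's inequality by $\delta$ times the coercive top-order term plus a Gronwall-type contribution. The essential point is how to handle the cross contributions
\[
\int_{\R^6}\nabla_v D_x^{\beta}f\cdot D_x^{\beta-\beta'}A[f]\,\nabla_v D_x^{\beta'}f\,\bk{v}^{2m}\,\dd x\,\dd v,
\]
and here the structural hypothesis \eqref{Hkreg.kappa} is decisive: since $C_{\beta}\kappa \pm D_x^{\beta}\kappa\geq 0$, the matrix $C_{\beta}A[f]\pm D_x^{\beta}A[f]$ is positive semi-definite, so that $A[f]^{-1/2}D_x^{\beta}A[f]A[f]^{-1/2}$ is uniformly bounded. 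This allows the above integral to be split between $\delta$ times the top-order coercive integral for $\beta$ and $C(\delta)$ times the coercive integral for $\beta'$, which is already controlled by induction on $|\beta|$. The remaining Leibniz terms involving $D_x^{\beta-\beta'}\nabla_v a[f]$ are estimated using \eqref{Da.decay} and absorbed similarly. Gronwall closes the $|\alpha|=0$ induction.

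For the outer induction step $|\alpha|\geq 1$, I would differentiate \eqref{Hkreg.1} once more in $v_i$ and test against $\bk{v}^{2m}\partial_{v_i}D_x^{\beta}f$. Two new features appear: first, the transport operator now generates the cross term
\[
\int_{\R^6}\partial_{v_i}D_x^{\beta}f\,\partial_{x_i}D_x^{\beta}f\,\bk{v}^{2m}\,\dd x\,\dd v,
\]
which contains derivatives of total order $|\alpha|+|\beta|$ but of $x$-order $|\beta|+1$ and $v$-order $|\alpha|-1$, hence controlled by the previous induction step. Second, terms involving $D_v A[f]$ arise; for $|\alpha|=1$ these are harmless thanks to the bound $|\partial_{v_i}A[f]|\leq C\bk{v}^{\gamma+1}$ from Corollary \ref{cor.Linf}.

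The hard step, and the one I expect to be the main obstacle, is $|\alpha|\geq 2$: repeated $v$-derivatives of the Landau kernel $N(v-w)$ become increasingly singular at $v=w$ when $\gamma<0$, and the operator-monotonicity trick used for the $x$-derivatives is unavailable. To get around this I would split $A[f]=A_1[f]+A_2[f]$ using a cutoff $\eta\in C_c^{\infty}$ supported near $v-w=0$, so that on $A_1$ only one $v$-derivative falls on the singular kernel while all the remaining $v$-derivatives are transferred onto $f$ by integration by parts in the convolution integral, producing the pointwise bound
\[
|D_v^{\theta}D_x^{\sigma}A[f]|\leq C\,\|D_v^{\theta-\theta'}f\|_{L^2(\R^6)}+C\bk{v}^{\gamma+1},\qquad |\theta'|=1,
\]
where the first term is controlled by the inductive hypothesis (since $|\theta-\theta'|<|\theta|$) and the second enters the energy estimate with favorable weight $\bk{v}^{\gamma+1}$ that is absorbed by the dissipation. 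With this splitting the same Young-plus-coercivity scheme as in the base case closes the estimate, and a final Gronwall argument applied to the resulting differential inequality yields both conclusions of \eqref{Hkreg.goal}.
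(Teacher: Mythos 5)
Your proposal reproduces the paper's argument essentially step by step: the nested induction on $|\alpha|$ and $|\beta|$, the weighted energy estimate tested against $\bk{v}^{2m}D_v^\alpha D_x^\beta f$, the use of the hypothesis $|D_x^\beta\kappa|\le C_\beta\kappa$ to deduce boundedness of $A[f]^{-1/2}D_x^\beta A[f]A[f]^{-1/2}$ and thereby split the mixed coercive terms, the treatment of the transport cross-term by shifting one derivative from $v$ to $x$, and the cutoff splitting $A[f]=A_1[f]+A_2[f]$ with integration by parts to bound $|D_v^\theta D_x^\sigma A[f]|$ for $|\theta|\ge 2$. This is the same approach as the paper's proof, and it is correct.
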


To obtain strong solution to (\ref{1}) it is enough to chose $s=2$. For classical solution one can consider $s= 5$.

\section{Fisher information for $\kappa \equiv 1$, $\gamma \in [ -3,1]$} \label{sec:fisher}

In \cite{GS24} one of the authors and Silvestre obtained the monotonicity of the Fisher information for the Landau equation in the space homogeneous setting (see also the recent expository article \cite{GuiSil25} and the survey by Villani \cite{Villani2025} ).

 We adapt some of these ideas to obtain the {\em{monotonicity}} of the \emph{spatial} Fisher information for solutions of \eqref{1} when $\kappa \equiv 1$ and a variety of $\gamma$'s including the regime of very soft potentials. As a result we are also able to control the {\em{growth}} of the {\em{full}} Fisher information. This is summarized in the following theorem. 
\begin{theo}\label{Theo_Fisher_xv}
Let $f$ be a smooth solution to (\ref{1}) with $\kappa \equiv 1$ and $\gamma \in [-3, 1]$.  We have 
\begin{align*}
\frac{d}{dt}  \int \frac{|\nabla_x f|^2}{f} \;dxdv & \le  0, \\
\sup_{t\in [0,T]}\; \int \frac{|\nabla_v f|^2}{f}+  \frac{|\nabla_x f|^2}{f} \;dxdv & \le C(T, f_{in}). 
\end{align*}
\end{theo}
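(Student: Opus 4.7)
The starting observation is that with $\kappa \equiv 1$ the coefficients simplify dramatically: setting $\bar f(t,v) := \int_{\R^3} f(t,x,v)\,\dd x$, both $A[\bar f](v) = \int N(v-v^*) \bar f(v^*)\,\dd v^*$ and $\na_v a[\bar f](v)$ depend only on $v$ (and on $t$ through $\bar f$), not on $x$. Hence the equation may be written as $\pa_t f + v \cdot \na_x f = L_t f$ where $L_t h := \Div_v(A[\bar f] \na_v h - h\,\na_v a[\bar f])$ is \emph{linear} in $h$ at each fixed time. The first step in my plan is to establish the following algebraic identity: if $f, g$ both solve $\pa_t h + v \cdot \na_x h = L_t h$ with $f$ positive, then
\[
\frac{d}{dt} \int_{\R^6} \frac{g^2}{f} \dd x \dd v = -2 \int_{\R^6} f\, \na_v(g/f) \cdot A[\bar f]\, \na_v(g/f) \dd x \dd v \leq 0.
\]
This follows from direct differentiation: the transport contribution integrates to $-\int v\cdot \na_x(g^2/f) = 0$ via $\Div_x v = 0$, while in the collision piece the $\na_v a$ cross-terms cancel by algebra after integration by parts in $v$, leaving a manifest square.

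For the first claim, since $L_t$ has no $x$-dependence and $\pa_{x_i} v = 0$, the commutator $[\pa_{x_i},\, \pa_t + v \cdot \na_x - L_t]$ vanishes, so $g_i := \pa_{x_i} f$ solves the same linear equation as $f$. Applying the identity with $g = g_i$ and summing over $i$ yields
\[
\frac{d}{dt} \int_{\R^6} \frac{|\na_x f|^2}{f} \dd x \dd v = -2 \sum_i \int_{\R^6} f\, \na_v(\pa_{x_i} \log f) \cdot A[\bar f]\, \na_v(\pa_{x_i} \log f) \dd x \dd v \leq 0,
\]
which is the desired monotonicity.

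For the second claim, the velocity derivatives do not commute with the evolution: direct computation gives $\pa_t h_i + v \cdot \na_x h_i = L_t h_i + R_i$, with $h_i := \pa_{v_i} f$ and forcing $R_i := \Div_v(\pa_{v_i} A[\bar f] \na_v f - f\, \pa_{v_i} \na_v a[\bar f]) - \pa_{x_i} f$. Repeating the computation used for the identity, with the extra forcing accounted for, yields
\[
\frac{d}{dt} \int_{\R^6} \frac{h_i^2}{f} \dd x \dd v = -2 \int_{\R^6} f\, \na_v(h_i/f) \cdot A[\bar f]\, \na_v(h_i/f) \dd x \dd v + 2 \int_{\R^6} \frac{h_i R_i}{f} \dd x \dd v.
\]
I would then absorb the forcing term in two pieces. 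The part coming from $-\pa_{x_i} f$ sums over $i$ to $-2\int \na_v f \cdot \na_x f / f\,\dd x\dd v$, bounded by $I_x + I_v$ via Cauchy--Schwarz. The $\Div_v$ piece, after integration by parts in $v$, is controlled using the pointwise bounds $|\pa_{v_i} A[\bar f]| \leq C \bk{v}^{\gamma+1}$ and $|\pa_{v_i} \na_v a[\bar f]| \leq C \bk{v}^\gamma$ (from Corollary~\ref{cor.Linf} applied to $\bar f$) together with the ellipticity $A[\bar f] \geq \alpha \bk{v}^\gamma \eye$ of Lemma~\ref{lem.aA.lb} with $\lambda = 0$. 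A weighted Cauchy--Schwarz then splits this as $\epsilon \cdot (\text{dissipation}) + C(\epsilon)(1 + I_v + I_x)$. Combining with the first claim gives $\frac{d}{dt}(I_x + I_v) \leq C(1 + I_x + I_v)$, and Gronwall's lemma closes the bound.

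The main obstacle is the Cauchy--Schwarz splitting for very soft potentials. The weighted term $\int \bk{v}^{\gamma+2} |\na_v f|^2/f$ that naturally arises must be interpolated against $I_v$ and the velocity moments from Lemma~\ref{ss.moments}; this is delicate as $\gamma \to -3$, where the ellipticity weight $\bk{v}^\gamma$ degenerates at infinity and the moment exchange between Fisher information and higher $v$-moments must be performed with care. A secondary technicality is verifying that $\bar f$ inherits the necessary moment and $L^\infty$ bounds from $f$ so that Corollary~\ref{cor.Linf} applies to the averaged kernels, but this is automatic under the smoothness assumed in the theorem.
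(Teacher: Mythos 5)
Your proof of the first claim (monotonicity of the spatial Fisher information) is correct and is a genuinely different, more elementary route than the paper's. The paper lifts the problem to $\mathbb{R}^{12}$ and carries out the Guillen--Silvestre doubling-of-variables computation with the vector fields $\tilde b_k$, $\tilde e_i$, $\hat e_i$. You instead exploit the observation that when $\kappa\equiv 1$ the coefficients $A[\bar f]$ and $\nabla_v a[\bar f]$ depend only on $(t,v)$, so the collision operator $L_t$ is a fixed linear Fokker--Planck operator in $v$ at each time; then $\partial_{x_i}f$ satisfies the same transport--diffusion equation as $f$, and the algebraic identity $\frac{d}{dt}\int g^2/f = -2\int f\,\nabla_v(g/f)\cdot A[\bar f]\nabla_v(g/f)$ for two solutions of the same linear equation immediately gives the sign. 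This is a real simplification: it removes all commutator bookkeeping for the $x$-part.

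The second claim is where the proposal breaks. The proof plan for bounding the velocity Fisher information relies on the pointwise coefficient bounds $|\partial_{v_i}A[\bar f]|\leq C\bk{v}^{\gamma+1}$, $|\nabla_v^2 a[\bar f]|\leq C\bk{v}^{\gamma}$ and on absorbing the resulting forcing into the dissipation $\alpha\bk{v}^{\gamma}\int f|\nabla_v(\partial_{v_i}\log f)|^2$ via weighted Cauchy--Schwarz. Carrying this out on the term $\int \nabla_v(\partial_{v_i}\log f)\cdot(\partial_{v_i}A[\bar f])\,f\nabla_v\log f$ produces, after optimizing, the integral $\int \bk{v}^{\gamma+2}\,|\nabla_v f|^2/f\,\dd x\dd v$. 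For $\gamma\leq -2$ this weight is bounded and the Gronwall argument closes, but for $\gamma\in(-2,1]$ the weight grows at infinity and this quantity is not controlled by the Fisher information plus the moments of Lemma \ref{ss.moments}. You would need a weighted Fisher-information bound that your argument does not provide. Moreover, Corollary \ref{cor.Linf} is established only in the moderately soft regime $\gamma\in(-2,0]$ (it rests on the $L^\infty$ theory of Section 2), so the very-soft endpoint $\gamma\to -3$ of the theorem is also not covered by those coefficient bounds as stated. You flag these points as ``delicate'' but they are not technicalities: they are the heart of the matter, and it is exactly where the paper takes a different route. The paper writes the commutator contribution to $\frac{d}{dt}\int|\nabla_v f|^2/f$ as $\frac{\gamma^2}{2}\sum_k\int \frac{1}{|v-w|^2}F(\sqrt{\alpha}\tilde b_k\cdot\nabla\ln F)^2\dd z$ and bounds it by the dissipation term itself using the Guillen--Silvestre functional inequality (valid for $|\gamma|\leq\sqrt{22}$), with no recourse to coefficient decay, Gronwall, or moment interpolation. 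That step has no analogue in your argument, and without it the full range $\gamma\in[-3,1]$ is out of reach.
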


We start by rewriting the collision kernel in (\ref{1}). For simplicity we adopt  the notation from \cite{GS24}:
$$
N(v-w)=   |v-w|^{\gamma}\sum_{k=1}^3 b_k(v-w) \otimes b_k(v-w).
$$
Here, $b_1,b_2,b_3$ are vector fields generating rotations about the axes in $\mathbb{R}^3$ given by the canonical basis. That is  
\begin{align*}
b_1 := \left( \begin{array}{ccc} 0 \\
                             w_3-v_3\\
                             v_2-w_2
              \end{array}  \right),\;\; b_2 := \left( \begin{array}{ccc} v_3-w_3 \\
                             0\\
                             w_1-v_1
              \end{array}  \right),\;\; b_3 := \left( \begin{array}{ccc} w_2-v_2 \\
                                                   v_1-w_1\\
                                                   0\\
              \end{array}  \right).\end{align*}
			  
The approach in \cite{GS24} relies on doubling the velocity variables, so one considers certain ``liftings'' of these vectors fields from $\mathbb{R}^3$ to $\mathbb{R}^6$. In our case we will lift vectors from $\mathbb{R}^{3} \times \mathbb{R}^{3}$ to $(\mathbb{R}^{3})^4$: let $z \in \mathbb{R}^{12} = (\mathbb{R}^{3})^4$ be the vector 
$$
z := (x,v,y,w),
$$
 representing the position and velocities of a pair of particles. Thus, we will use $\tilde{b}_k$  to denote the vector 
\begin{align*}\tilde{b}_k(v-w):=\left( \begin{array}{ccc} 0\\b_k \\0\\
                             -b_k  \end{array} \right ),  \quad k=1,2,3.
\end{align*}			  
We will also refer frequently to the following vectors 	  
\begin{align*}
  \tilde{e}_i :=\left( \begin{array}{ccc} e_i\\0\\
                             e_i \\0
              \end{array}  \right),\;\; \hat{e}_i :=\left( \begin{array}{ccc} e_i\\0\\
                             -e_i \\0
              \end{array}  \right),\;\; \tilde{\xi}_i :=\left( \begin{array}{ccc} 0\\ e_i\\
                          0\\   e_i 
              \end{array}  \right),\;\; \hat{\xi}_i :=\left( \begin{array}{ccc} 0\\e_i\\
                            0\\ -e_i 
              \end{array}  \right), \quad i=1,2,3,	
\end{align*}				  
where $\{e_i\}$ are the basis vectors for $\mathbb{R}^3$. 

 For the rest of this subsection, $\nabla$ and $\Div$ will represent the gradient and divergence with respect to $z$. Note that for any function $h(z)$ we have 
\begin{align*}
| \nabla_v h|^2 + | \nabla_w h|^2 =  \frac{1}{2} \sum_{i=1}^3 \; | \tilde e_i  \cdot \nabla h|^2 + | \hat e_i \cdot \nabla h|^2,\\
| \nabla_x h|^2 + | \nabla_y h|^2 = \frac{1}{2} \sum_{i=1}^3 \; | \tilde \xi_i \cdot  \nabla h|^2 + | \hat \xi_i \cdot  \nabla h|^2.
\end{align*}

In Lemma \ref{Lem:Fisher_Landau} and Lemma \ref{Lem:Fisher_Landau_v} we compute the time derivative  of the Fisher information along \eqref{1}. A key observation is that these time derivatives can be expressed as integrals in double the number of variables (so, $z$ instead of just $(x,y)$ and $(v,w)$). These integrals involve the function $z\mapsto f(x,v)f(y,w)$, and a degenerate elliptic operator in $\mathbb{R}^{12}$ acting on a function $F(z) = F(x,y,v,w)$ according to
\begin{align}\label{e:lifted operator}
  Q(F)(x,v,y,w) :=\kappa(x-y)  \sum_{k=1}^3  \sqrt{\alpha} \tilde{b}_k \cdot \nabla(\sqrt{\alpha}  \tilde{b}_k \cdot \nabla F).
\end{align}  
From now on the function $F$ represents the product $f(x,v) f(y,w)$. 

The next lemma provide preparatory identities. 
\begin{lemma}
Denote by $F$ the product 
$$
F(x,v,y,w):= f(x,v) f(y,w).
$$ 
Let $ \alpha(|v-w|) := |v-w|^{\gamma}$ and $f(x,v)$ solution to (\ref{1}). We have the following identities: 
\begin{align}
\Div_{v-w} & \left[ \kappa(x-y)N(v-w) (\nabla_v-\nabla_w) F\right] = \kappa(x-y) \sum_{k=1}^3  \sqrt{\alpha} \tilde{b}_k \cdot \nabla(\sqrt{\alpha}  \tilde{b}_k \cdot \nabla F), \label{Q(F)}\\
{} \nonumber \\
 \frac{d}{dt}  \int \frac{(e_i \cdot \nabla_x f)^2 }{f} \;dvdx =&  \frac{1}{2} \int \int  \frac{\tilde e_i \cdot  \nabla F }{F}  (\tilde e_i \cdot \nabla Q(F)) +   \frac{\hat e_i \cdot  \nabla F }{F}  (\hat e_i \cdot \nabla Q(F))\;dz  \nonumber \\
 & - \frac{1}{4} \int\int  \left( \frac{|\tilde e_i \cdot  \nabla F|^2 }{F^2}  +   \frac{|\hat e_i \cdot  \nabla F|^2 }{F^2} \right) Q(F) \;dz\label{Equiv} ,\\
 {} \nonumber \\
 \frac{d}{dt}  \int \frac{(e_i \cdot \nabla_v f)^2 }{f} \;dvdx =& \; \frac{1}{2} \int \int  \frac{\tilde \xi_i \cdot  \nabla F }{F}  (\tilde \xi_i \cdot \nabla Q(F)) +   \frac{\hat \xi_i \cdot  \nabla F }{F}  (\hat \xi_i \cdot \nabla Q(F))\;dz   \nonumber \\
 & - \frac{1}{4} \int\int  \left( \frac{|\tilde \xi_i \cdot  \nabla F|^2 }{F^2}  +   \frac{|\hat \xi_i \cdot  \nabla F|^2 }{F^2} \right) Q(F) \;dz \label{Equiv_v} \\
 &- 8 \int (e_i \cdot \nabla_v \sqrt{f})(e_i \cdot  \nabla_x \sqrt{f}) \;dxdv,   \nonumber
\end{align}
 where $Q(F)$ is the linear operator defined in \eqref{e:lifted operator}. 
\end{lemma}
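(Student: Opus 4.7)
The plan is to establish the three identities separately. The first is purely algebraic, and the remaining two follow from differentiating under the integral and symmetrizing in the doubled phase space.

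For identity \eqref{Q(F)}, I would start by expanding the right-hand side using the Leibniz rule:
\[
\sqrt{\alpha}\,\tilde{b}_k\cdot\nabla(\sqrt{\alpha}\,\tilde{b}_k\cdot\nabla F)
=\sqrt{\alpha}(\tilde{b}_k\cdot\nabla\sqrt{\alpha})(\tilde{b}_k\cdot\nabla F)
+\alpha\,\tilde{b}_k\cdot\nabla(\tilde{b}_k\cdot\nabla F).
\]
Because $\sqrt{\alpha}=|v-w|^{\gamma/2}$ depends only on $|v-w|$, its gradient in either $v$ or $w$ is parallel to $v-w$; since the rotation generators satisfy $b_k\cdot(v-w)=0$, the first term vanishes. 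The remaining piece is handled using $\tilde{b}_k\cdot\nabla=b_k\cdot(\nabla_v-\nabla_w)$, the well-known identity $\sum_k b_k\otimes b_k=|v-w|^2 I-(v-w)\otimes(v-w)$ so that $\alpha\sum_k b_k\otimes b_k=N$, and the facts $\Div_u b_k=0$ and $b_k\cdot u=0$. A short index calculation then shows that $\kappa\alpha\sum_k\tilde{b}_k\cdot\nabla(\tilde{b}_k\cdot\nabla F)$ coincides with $(\Div_v-\Div_w)[\kappa N(v-w)(\nabla_v-\nabla_w)F]$, which is the meaning of $\Div_{v-w}$ here.

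For \eqref{Equiv}, I differentiate in time, substitute $\partial_t f=-v\cdot\nabla_x f+Q(f,f)$, and observe that with $g=e_i\cdot\nabla_x f$, the free streaming contributions $-2gv\cdot\nabla_x g/f$ and $(g^2/f^2)v\cdot\nabla_x f$ cancel after integration by parts in $x$ (write $v\cdot\nabla_x g^2/f=v\cdot\nabla_x(g^2/f)+(g^2/f^2)v\cdot\nabla_x f$ and note that $v$ is independent of $x$). This leaves
\[
\frac{d}{dt}\int\frac{g^2}{f}\,\dd x\,\dd v
=\int\frac{2g\,(e_i\cdot\nabla_x Q(f,f))}{f}\,\dd x\,\dd v
-\int\frac{g^2}{f^2}Q(f,f)\,\dd x\,\dd v.
\]
Now I use the crucial consequence of \eqref{Q(F)}: since $\int\Div_w[\cdots]\,\dd y\,\dd w=0$, one has $Q(f,f)(x,v)=\int Q(F)(z)\,\dd y\,\dd w$. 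Plugging this in and using $F=f(x,v)f(y,w)$, I compute $\tilde{e}_i\cdot\nabla F/F=e_i\cdot\nabla_x\log f(x,v)+e_i\cdot\nabla_y\log f(y,w)$ and the analogous difference for $\hat{e}_i$. The cross terms between the two logarithmic derivatives cancel in $\tfrac12[(\tilde e_i\cdot\nabla F/F)(\tilde e_i\cdot\nabla Q(F))+(\hat e_i\cdot\nabla F/F)(\hat e_i\cdot\nabla Q(F))]$, and the symmetry of $Q(F)$ and $F$ under the swap $(x,v)\leftrightarrow(y,w)$ lets me rewrite the single integrals $\int(e_i\cdot\nabla_x\log f)e_i\cdot\nabla_x Q(f,f)\,\dd x\,\dd v$ and $\int(e_i\cdot\nabla_x\log f)^2 Q(f,f)\,\dd x\,\dd v$ in the announced doubled form; the same argument applies to the quadratic term with $1/4$ prefactor.

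For \eqref{Equiv_v} the strategy is identical with $g=e_i\cdot\nabla_v f$, but differentiating $v\cdot\nabla_x f$ in $v$ produces an additional term $-e_i\cdot\nabla_x f$ in $\partial_t g$, yielding the extra contribution
\[
-\int\frac{2(e_i\cdot\nabla_v f)(e_i\cdot\nabla_x f)}{f}\,\dd x\,\dd v
=-8\int(e_i\cdot\nabla_v\sqrt{f})(e_i\cdot\nabla_x\sqrt{f})\,\dd x\,\dd v,
\]
where I used $\nabla\sqrt{f}=\nabla f/(2\sqrt{f})$. The collision contributions are handled exactly as before, with $\tilde{e}_i,\hat{e}_i$ replaced by $\tilde{\xi}_i,\hat{\xi}_i$, which encode derivatives in the $v,w$ directions. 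The main technical point, and the place most prone to bookkeeping errors, is the cross-term cancellation in the symmetrization step: one must verify that the four products arising from expanding $(\tilde{e}_i\cdot\nabla F/F)(\tilde{e}_i\cdot\nabla Q(F))+(\hat{e}_i\cdot\nabla F/F)(\hat{e}_i\cdot\nabla Q(F))$ collapse to exactly the symmetric pair that matches $\int(2g/f)\,e_i\cdot\nabla_x Q(f,f)\,\dd x\,\dd v$ after invoking the swap symmetry of $Q(F)$.
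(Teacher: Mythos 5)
The proposal is correct and follows essentially the same route as the paper: the algebraic identity via $b_k\perp(v-w)$ and $\nabla\sqrt{\alpha}\parallel(v-w)$, cancellation of the transport contribution for the $x$-Fisher information, the extra term $-e_i\cdot\nabla_x f$ from $\nabla_v(v\cdot\nabla_x f)$ for the $v$-Fisher information, and symmetrization in the doubled variable $z$ using the swap invariance of $F$ and $Q(F)$. The only cosmetic difference is that you make explicit the observation $Q(f,f)=\int Q(F)\,\dd y\,\dd w$ before symmetrizing, whereas the paper manipulates the divergence inside the quadruple integral directly; both amount to the same use of $\int\Div_w[\cdots]\,\dd w=0$.
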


\begin{proof} 
Using that $ b_k \otimes b_k=   \{{b_k}_i {b_k}_j \}_{i,j}$ we have that 
\begin{align*}
  \textrm{div}_{v-w} \left[\kappa(x-y)N(v-w)(\nabla_v-\nabla_w) F \right]
\end{align*}
is equal to
\begin{align*}
& \kappa(x-y)\sum_k \sum_i \partial_{v_i}  - \partial_{w_i} \left[\alpha \;  \sum_j {b_k}_i  {b_k}_j (\partial_{v_j}  - \partial_{w_j}) F\right]\\
& = \kappa(x-y) \sum_k \sum_i \partial_{v_i}  - \partial_{w_i} \left[ \alpha \; {b_k}_i \tilde{b}_k \cdot \nabla F  \right]\\
& = \kappa(x-y)\sum_k \textrm{div}_{v,w} ( \alpha \; \tilde{b}_k\tilde{b}_k\cdot \nabla F ) \\
& = \kappa(x-y) \sum_k  \tilde{b}_k \cdot \nabla( \alpha \; \tilde{b}_k\cdot \nabla F ) =  \sum_k \sqrt{\alpha} \;\tilde{b}_k \cdot \nabla( \sqrt{\alpha}  \; \tilde{b}_k\cdot \nabla F ).
\end{align*}
Here we used that $\Div \tilde{b}_k =0$ and that $\nabla \sqrt{\alpha} \cdot \tilde{b}_k =0$ for $k=1,2,3$.

For the second identity, we expand the time derivative and substitute the equation for $f_t$:
\begin{align*}
\frac{d}{dt}  \int \frac{|e_i \cdot  \nabla_x f|^2 }{f} \;dvdx =& \; 2 \int \frac{e_i \cdot \nabla_x f }{f} (e_i \cdot  \nabla_x f_t) \;dvdx - \int  \frac{|e_i \cdot  \nabla_x f|^2 }{f^2} f_t\;dvdx  \\
=&  -2 \int \frac{e_i \cdot \nabla_x f }{f} (e_i \cdot  \nabla_x (v\cdot \nabla_x f) ) \;dvdx + \int  \frac{|e_i \cdot  \nabla_x f|^2 }{f^2} (v\cdot \nabla_x f) \;dvdx  \\
& +2 \int \frac{e_i \cdot \nabla_x f }{f} (e_i \cdot  \nabla_x q(f) ) \;dvdx - \int  \frac{|e_i \cdot  \nabla_x f|^2 }{f^2} q(f)\;dvdx\\
=& \; J_1+J_2+ I_1+ I_2. 
\end{align*}
Integrating by parts yields 
$$
J_1+J_2 = -8 \int \nabla_x \sqrt{f} \cdot \nabla_x ( v \cdot \nabla_x \sqrt{f})\;dxdv = 0.
$$
Let's first consider the term $I_1$: 
\begin{align*}
I_1 = & \;2 \int \int  \frac{e_i \cdot  \nabla_x f }{f}  (e_i \cdot \nabla_v \left( \textrm{div}_v  \kappa (x-y)N(v-w) (\nabla_v-\nabla_w) F \right))\;dvdxdwdy \\
 =&  \;2 \int \int  \frac{e_i \cdot  \nabla_x f }{f}  (e_i \cdot \nabla_x \left(\textrm{div}_{v-w} \kappa (x-y)N(v-w) (\nabla_v-\nabla_w) F\right))\;dz \\ 
 =&  \;2 \int \int  \frac{e_i \cdot  \nabla_x f }{f}  (e_i \cdot \nabla_x Q(F))\;dz  = \;2 \int \int  \frac{e_i \cdot  \nabla_x F }{F}  (e_i \cdot \nabla_x Q(F))\;dz  ,
 \end{align*} 
  using (\ref{Q(F)}) and the fact that $ \frac{\nabla_x f }{f} = \frac{\nabla_x F }{F} $. We now proceed with the symmetrization:
 \begin{align*}
I_1 = & \int \int  \frac{e_i \cdot  \nabla_x F }{F}  (e_i \cdot \nabla_x Q(F))\;dz+ \int \int  \frac{e_i \cdot  \nabla_y F }{F}  (e_i \cdot \nabla_y Q(F))\;dz \\
=&  \frac{1}{2} \int \int  \frac{\tilde e_i \cdot  \nabla F }{F}  (\tilde e_i \cdot \nabla Q(F))\;dz +\frac{1}{2} \int \int  \frac{\hat e_i \cdot  \nabla F }{F}  (\hat e_i \cdot \nabla Q(F))\;dz. 
\end{align*}
Similarly, for $I_2$ we have  
 \begin{align*}
- \int  \frac{|e_i \cdot  \nabla_x f|^2 }{f^2} f_t\;dxdv =&  - \int\int  \frac{|e_i \cdot  \nabla_x F|^2 }{F^2} \left( \textrm{div}_v  \kappa (x-y)N(v-w) (\nabla_v-\nabla_w) F \right))\;dvdwdxdy \\
=& - \frac{1}{2} \int\int  \frac{|e_i \cdot  \nabla_w F|^2 }{F^2}  Q(F) \;dz - \frac{1}{2} \int\int  \frac{|e_i \cdot  \nabla_v F|^2 }{F^2}  Q(F) \;dz\\
=& - \frac{1}{4} \int\int  \frac{|\tilde e_i \cdot  \nabla F|^2 }{F^2}  Q(F) \;dz - \frac{1}{4} \int\int  \frac{|\hat e_i \cdot  \nabla F|^2 }{F^2}  Q(F) \;dz.
\end{align*} 
We follow similar computations to get (\ref{Equiv_v}), with the exception of the contribution from the transport term:
$$
 \int -2 \frac{e_i \cdot \nabla_v f }{f} (e_i \cdot  \nabla_v (v\cdot \nabla_x f) )  + \frac{|e_i \cdot  \nabla_v f|^2 }{f^2} (v\cdot \nabla_v f) \;dvdx  = -8  \int (e_i \cdot \nabla_v \sqrt{f})(e_i \cdot  \nabla_x \sqrt{f}) \;dxdv.
$$
This concludes the proof. 

\end{proof}
Now we compute the time evolution of the Fisher information, considering the spatial and velocity components separately. From now on  $\kappa \equiv 1$. Very frequently we will use the identity 
$$
a \cdot \nabla (b \cdot \nabla c) = b \cdot \nabla (a \cdot \nabla c) + [a,b] \cdot \nabla c,
$$
where $[a,b]$ is the vector with $i$-th component $[a,b]_i := \sum_{j} a_j\partial_j b_i - b_j \partial_j a_i$. 

The next lemma gathers some basic commutator identities.

\begin{lemma}\label{lem_commutators}
We have 
$$
[\tilde{e}_i, \sqrt{\alpha}\tilde{b}_k] = [\hat{e}_i, \sqrt{\alpha} \tilde{b}_k]=[\tilde{\xi}_i, \tilde{b}_k] = [\hat{\xi}_i, \tilde{b}_i]= 0 , \quad \forall \; i,k=1,2,3,
$$
and 
$$
 [\hat{\xi}_1, \tilde{b}_2] =-[\hat{\xi}_2, \tilde{b}_1]=  -2\hat{\xi}_3 , \quad  [\hat{\xi}_1, \tilde{b}_3]=- [\hat{\xi}_3, \tilde{b}_1]=  2\hat{\xi}_2, \quad  [\hat{\xi}_3, \tilde{b}_2]= -[\hat{\xi}_2, \tilde{b}_3] =2 \hat{\xi}_1 . 
 $$ 
  Moreover, for $\alpha(r) = r^{\gamma}$ with $r = |v-w|$, we have
 \begin{align*}
[\tilde{\xi}_i, \sqrt{\alpha} \;\tilde{b}_k] &= \sqrt{\alpha}{\underbrace{[\tilde{\xi}_i,  \; \tilde{b}_k] }_{=0}}+ {\underbrace{(\tilde{\xi_i}\cdot \nabla \sqrt{\alpha})}_{=0}} \tilde{b}_k = 0, \\
\quad [\hat{\xi}_i,  \sqrt{\alpha} \; \tilde{b}_k] &=  \sqrt{\alpha}[\hat{\xi}_i,  \; \tilde{b}_k] + (\hat \xi_i \cdot \nabla \sqrt{\alpha}) \tilde{b}_k = \sqrt{\alpha}[\hat{\xi}_i,  \; \tilde{b}_k] + {{ \gamma (v_i-w_i)\;\frac{\sqrt{\alpha}}{r^2} \;\tilde{b}_k}}.
 \end{align*}

\end{lemma}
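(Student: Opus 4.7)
The plan is to derive every identity from the single observation that each of the vectors $\tilde e_i,\hat e_i,\tilde\xi_i,\hat\xi_i$ has constant coefficients in $z=(x,v,y,w)$, so the definition $[a,b]_i=\sum_j a_j\partial_j b_i-b_j\partial_j a_i$ collapses to a plain directional derivative $[a,V]=(a\cdot\nabla)V$. First I would catalog how each constant field acts on functions of $v-w$: $\tilde e_i$ and $\hat e_i$ have no $v$- or $w$-component at all, so $\tilde e_i\cdot\nabla$ and $\hat e_i\cdot\nabla$ annihilate $\tilde b_k(v-w)$ and $\sqrt{\alpha(|v-w|)}$; the field $\tilde\xi_i=(0,e_i,0,e_i)$ gives $\tilde\xi_i\cdot\nabla=\partial_{v_i}+\partial_{w_i}$, which likewise kills every function of $v-w$; and $\hat\xi_i\cdot\nabla=\partial_{v_i}-\partial_{w_i}$ acts on any function $g(v-w)$ as $2\partial_{u_i} g$ with $u=v-w$. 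This immediately produces the four vanishing commutators $[\tilde e_i,\sqrt\alpha\,\tilde b_k]=[\hat e_i,\sqrt\alpha\,\tilde b_k]=[\tilde\xi_i,\tilde b_k]=0$, and the identity $[\hat\xi_i,\tilde b_i]=0$ reduces to checking $\partial_{u_i}b_i=0$ from the explicit form of $b_i$.

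For the non-zero cross commutators $[\hat\xi_i,\tilde b_k]$ with $i\neq k$, I would simply compute $\partial_{u_i}b_k$ from the explicit expressions of $b_1,b_2,b_3$ given in the paper. Each such derivative is a signed basis vector $\pm e_j$ for some $j\in\{1,2,3\}\setminus\{i,k\}$; embedding it back via the map $c\mapsto(0,c,0,-c)$ converts $2\partial_{u_i}b_k$ into $\pm 2\hat\xi_j$, matching the six stated identities after tracking signs (the only bookkeeping step where one must be careful).

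The two Leibniz-type identities for $\sqrt\alpha\,\tilde b_k$ follow by writing
\[
[a,\sqrt\alpha\,\tilde b_k]=(a\cdot\nabla\sqrt\alpha)\,\tilde b_k+\sqrt\alpha\,[a,\tilde b_k],
\]
valid for any constant vector field $a$. For $a=\tilde\xi_i$ both summands vanish by the observations above. For $a=\hat\xi_i$ only the first summand changes: using $\sqrt\alpha=|v-w|^{\gamma/2}$ one has $\partial_{v_i}\sqrt\alpha=\frac{\gamma}{2}|v-w|^{\gamma/2-2}(v_i-w_i)$ and its negative for $\partial_{w_i}$, so $\hat\xi_i\cdot\nabla\sqrt\alpha=\gamma(v_i-w_i)\sqrt\alpha/r^2$, giving the displayed formula.

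There is no genuine analytic obstacle here; the entire lemma is a direct verification. The only point where attention is required is consistent sign tracking in the six non-zero $[\hat\xi_i,\tilde b_k]$ identities, which is the reason I would lay out the three explicit formulas for $b_1,b_2,b_3$ once at the start and reuse them for each case rather than invoke abstract rotation-generator relations.
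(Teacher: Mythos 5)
Your proposal is correct, and it matches the only approach available for this lemma, which is a direct computation; the paper states the lemma without a separate proof, embedding the Leibniz-rule expansion (your displayed identity $[a,\sqrt\alpha\,\tilde b_k]=(a\cdot\nabla\sqrt\alpha)\tilde b_k+\sqrt\alpha[a,\tilde b_k]$ for constant $a$) into the statement itself and leaving the commutator table as a verification. Your three key reductions — that every $a\in\{\tilde e_i,\hat e_i,\tilde\xi_i,\hat\xi_i\}$ is constant so $[a,V]=(a\cdot\nabla)V$; that $\tilde e_i,\hat e_i,\tilde\xi_i$ annihilate anything depending only on $v-w$; and that $\hat\xi_i\cdot\nabla$ becomes $2\partial_{u_i}$ on functions of $u=v-w$ — are exactly the facts needed, and a spot-check of the sign conventions (e.g. $\partial_{u_1}b_2=(0,0,-1)$ so $[\hat\xi_1,\tilde b_2]=(0,-2e_3,0,2e_3)=-2\hat\xi_3$) confirms they come out as stated.
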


We have all ingredients to compute the derivative of the spatial Fisher information:

\begin{lemma} \label{Lem:Fisher_Landau}
Let $f(x,v)$ be a solution to (\ref{1}) with $\kappa =1$. Denote by $F$ the product $F(x,v,y,w):= f(x,v) f(y,w)$. We have
\begin{align*}
\frac{d}{dt} \int   \frac{|\nabla_x f|^2 }{f} \; dvdx =& -  \frac{1}{2} \sum_{k=1}^3  \int  F  (\tilde{e}_i  \sqrt{\alpha} \cdot \nabla (  \tilde{b}_k  \cdot \nabla \ln F))^2  \;dz \\
& -  \frac{1}{2}\sum_{k=1}^3  \int F  ( \sqrt{\alpha} \hat{e}_i \cdot \nabla (  \tilde{b}_k  \cdot \nabla \ln F))^2  \;dz. 
\end{align*}

\end{lemma}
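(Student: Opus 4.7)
The plan is to start from identity \eqref{Equiv} summed over $i=1,2,3$, which reduces the lemma to establishing, for each constant vector $X\in\{\tilde{e}_i,\hat{e}_i\}$, the identity
\begin{align*}
A(X) &:= \frac{1}{2}\int \frac{X\cdot\nabla F}{F}(X\cdot\nabla Q(F)) \dd z - \frac{1}{4}\int \frac{|X\cdot\nabla F|^2}{F^2} Q(F) \dd z \\
&= -\frac{1}{2}\sum_{k=1}^3 \int F \bigl(\sqrt{\alpha}\, X\cdot \nabla(\tilde{b}_k\cdot\nabla \log F)\bigr)^2 \dd z.
\end{align*}
Setting $u=\log F$, $\phi := X\cdot\nabla u$, $Y_k := \sqrt{\alpha}\,\tilde{b}_k$, $\psi_k := Y_k\cdot\nabla u$, and $P_k := Y_k\cdot\nabla\phi$ will simplify the bookkeeping. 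Two structural facts from Lemma \ref{lem_commutators} are the whole engine: first, both $X$ and $Y_k$ are divergence-free on $\R^{12}$ (the former trivially, the latter because $\tilde{b}_k$ is a rotation field in $(v,w)$ while $\nabla\sqrt{\alpha}$ is radial in $v-w$), so each may be integrated by parts freely; second, $[X,Y_k]=0$ yields the key commutation $X\cdot\nabla\psi_k = Y_k\cdot\nabla\phi = P_k$. A short calculation using $\nabla F = F\nabla u$ then produces the expansion $Q(F) = F\sum_k (\psi_k^2 + Y_k\cdot\nabla\psi_k)$.

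With these in hand, I would treat the two pieces of $A(X)$ separately. For the first, I integrate by parts in $X$ to move $X$ off $Q(F)$ onto $\phi$, substitute the $Q(F)$ expansion, and integrate by parts once in $Y_k$ on the $\psi_k$-linear term; the $\psi_k^2$ contributions cancel, and the commutator converts $Y_k\cdot\nabla(X\cdot\nabla\phi)$ into $X\cdot\nabla P_k$, leaving $\tfrac{1}{2}\sum_k \int F\psi_k(X\cdot\nabla P_k)\,\dd z$. For the second, a single IBP in $Y_k$ followed by the chain rule yields $\tfrac{1}{2}\sum_k \int F\phi\psi_k P_k\,\dd z$. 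Summing and using $F\,X\cdot\nabla P_k + (X\cdot\nabla F) P_k = X\cdot\nabla(FP_k)$, the combined integrand becomes $\tfrac{1}{2}\sum_k \psi_k\,X\cdot\nabla(FP_k)$, so one final IBP in $X$ yields $-\tfrac{1}{2}\sum_k \int (X\cdot\nabla\psi_k) F P_k\,\dd z = -\tfrac{1}{2}\sum_k \int F P_k^2\,\dd z$. The identities $X\cdot\nabla\sqrt{\alpha}=0$ and $[X,\tilde{b}_k]=0$ then give $P_k = \sqrt\alpha\,X\cdot\nabla(\tilde{b}_k\cdot\nabla u)$, matching the target expression.

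The main delicacy will be sequencing the integrations by parts correctly: the first piece of $A(X)$ splits into three subterms of which two cancel, and the surviving residue must pair with the cross term produced by the second piece in order to reconstruct the exact $X$-derivative $X\cdot\nabla(FP_k)$. Every manipulation is ultimately an application of $\Div X=0$, $\Div Y_k=0$, or $[X,Y_k]=0$, so the only conceptual step beyond the setup already made in Lemma \ref{lem_commutators} is the recognition that the assembled integrand is a pure $X$-gradient, which after one last IBP becomes the desired negative square; the $\tilde{e}_i$ and $\hat{e}_i$ contributions are handled by the same argument verbatim.
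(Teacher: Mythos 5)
Your proof is correct and is built on the same pillars as the paper's proof: identity \eqref{Equiv}, the divergence-free property of the constant lifted vectors $\tilde{e}_i,\hat{e}_i$ and of $Y_k=\sqrt{\alpha}\,\tilde{b}_k$, and the vanishing commutators $[X,Y_k]=0$ for $X\in\{\tilde{e}_i,\hat{e}_i\}$ from Lemma~\ref{lem_commutators}. Where you diverge is in the bookkeeping: the paper first product-rule expands $X\cdot\nabla Q(F)$ pointwise and substitutes $Q(F)/F = Q(\ln F)+\sum_k\psi_k^2$ to get three integrals $\tilde I_1,\tilde I_2,\tilde I_3$, computes $\tilde I_1$ and $\tilde I_2$ separately, and closes by showing the residues of $\tilde I_1$ and $\tilde I_2$ cancel with $\tilde I_3$; you instead integrate by parts in $X$ first to obtain $-\tfrac12\int(X\cdot\nabla\phi)Q(F)\,\dd z$, expand $Q(F)=F\sum_k(\psi_k^2+Y_k\cdot\nabla\psi_k)$, kill the $\psi_k^2$ contributions between the two pieces of $A(X)$ immediately, and finish by recognizing the assembled integrand as the exact $X$-gradient $\tfrac12\sum_k\psi_k\,X\cdot\nabla(FP_k)$, from which one last integration by parts (with $X\cdot\nabla\psi_k=P_k$) produces the square. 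The content is the same algebra, but your sequencing is somewhat more compact — it removes the need to track the $\tilde I_1$-versus-$\tilde I_3$ cancellation explicitly — so I would call it a cleaner reorganization of the paper's argument rather than a genuinely different route.

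One small point worth noting for the write-up: in the lemma statement (and in \eqref{Equiv}) a sum over $i=1,2,3$ is implicit on the right-hand side once you pass to $|\nabla_x f|^2$ on the left; your proposal correctly performs this sum, but make it explicit so the indices match.
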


\begin{proof} 
Using (\ref{Equiv}) we get 
\begin{align*}
\frac{d}{dt}  \int \frac{|\nabla_x f|^2 }{f} \;dvdx = \sum_{i=1}^3 & \;  \frac{1}{2}  \int   \frac{\tilde{e}_i \cdot  \nabla F }{F}  (\tilde{e}_i\cdot \nabla Q(F)) \;dz -  \frac{1}{4} \int    \frac{(\tilde{e}_i \cdot \nabla F )^2 }{F^2} Q(F) \;dz\\
& \; + \frac{1}{2}  \int   \frac{\hat{e}_i \cdot  \nabla F }{F}  (\hat{e}_i\cdot \nabla Q(F)) \;dz -  \frac{1}{4} \int    \frac{(\hat{e}_i \cdot \nabla F )^2 }{F^2} Q(F) \;dz,
\end{align*}
where
\begin{align*}
Q(F)=  \sum_{k=1}^3 \sqrt{\alpha} \tilde{b}_k \cdot \nabla( \sqrt{\alpha} \tilde{b}_k \cdot \nabla F) .
\end{align*}
We use the formulation 
\begin{align*}
\int   \frac{1}{2} \frac{\tilde{e}_i \cdot  \nabla F }{F}  (\tilde{e}_i\cdot \nabla Q(F))  -  \frac{1}{4}  \frac{(\tilde{e}_i \cdot \nabla F )^2 }{F^2} Q(F) \;dz = &\; \frac{1}{2} \int  \frac{1}{2}  Q(F)  (  \tilde{e}_i \cdot \nabla \ln F)^2 \;dz \\
&+  \frac{1}{2} \int F (\tilde{e}_i \cdot \nabla \ln F) \left(  \tilde{e}_i \cdot \nabla \left(\frac{Q(F)}{F}\right)\right)\;dz.
\end{align*}
We substitute
$$
\frac{Q(F)}{F}  = Q(\ln F) + | \sqrt{\alpha} \tilde{b}_k \cdot \nabla \ln F|^2,
$$
in the last term and obtain
\begin{align*} 
\frac{1}{2} \int   Q(F) |\tilde{e}_i \cdot   \nabla \ln F|^2  \;dz & + \int   F (\tilde{e}_i \cdot \nabla \ln F) \left(\tilde{e}_i  \cdot \nabla Q(\ln F)\right) \;dz \\
&  + \int  F (\tilde{e}_i \cdot \nabla \ln F) \left(\tilde{e}_i  \cdot \nabla ( \alpha |    \tilde{b}_k \cdot \nabla \ln F|^2) \right)\;dz =: \tilde{I}_1 + \tilde I_2+ \tilde I_3.
\end{align*}

We start with $\tilde I_2$. Since $[\tilde{e}_i,  \sqrt{\alpha}\tilde{b}_k]=0$, we have  
\begin{align*}
\tilde I_2 =&   \sum_{k=1}^3   \int    F (\tilde{e}_i \cdot \nabla \ln F) \left(\tilde{e}_i  \cdot \nabla ( \sqrt{\alpha} \tilde{b}_k \cdot \nabla( \sqrt{\alpha}\tilde{b}_k \cdot \nabla \ln F) )\right) \;dz  \\
 =&   \sum_{k=1}^3   \int    F (\tilde{e}_i \cdot \nabla \ln F) \left( \sqrt{\alpha}  \tilde{b}_k   \cdot \nabla (  \tilde{e}_i \cdot \nabla( \sqrt{\alpha}\tilde{b}_k \cdot \nabla \ln F) )\right)  \;dz  \\
 =&  -  \sum_{k=1}^3  \int  \sqrt{\alpha} \tilde{b}_k \cdot \nabla (\tilde{e}_i \cdot \nabla  F)  (  \tilde{e}_i \cdot \nabla( \sqrt{\alpha} \;\tilde{b}_k \cdot \nabla \ln F) )  \;dz\\
 =& -  \sum_{k=1}^3  \int    \tilde{e}_i \cdot \nabla (\sqrt{\alpha} \tilde{b}_k  \cdot \nabla  F)  (  \tilde{e}_i \cdot \nabla(\sqrt{\alpha}  \tilde{b}_k \cdot \nabla \ln F) )  \;dz\\
 =& -  \sum_{k=1}^3  \int     \tilde{e}_i \cdot \nabla ( F \sqrt{\alpha} \tilde{b}_k  \cdot \nabla \ln F)  (  \tilde{e}_i \cdot \nabla( \sqrt{\alpha} \; \tilde{b}_k \cdot \nabla \ln F) )  \;dz\\ 
 = & -  \sum_{k=1}^3  \int    F  (\tilde{e}_i \cdot \nabla ( \sqrt{\alpha}  \tilde{b}_k  \cdot \nabla \ln F))  (  \tilde{e}_i \cdot \nabla(\sqrt{\alpha}  \tilde{b}_k \cdot \nabla \ln F) ) \;dz \\
 &-  \sum_{k=1}^3  \int     (\tilde{e}_i \cdot \nabla F) (  \sqrt{\alpha} \tilde{b}_k  \cdot \nabla \ln F)  (  \tilde{e}_i \cdot \nabla( \sqrt{\alpha}  \tilde{b}_k \cdot \nabla \ln F) )  \;dz\\
 =& -  \sum_{k=1}^3  \int    F  (\tilde{e}_i \cdot \nabla ( \sqrt{\alpha}  \tilde{b}_k  \cdot \nabla \ln F))^2 \;dvdw  \;dz \\
 &- \frac{1}{2} \sum_{k=1}^3   \int    (\tilde{e}_i \cdot \nabla F) \tilde{e}_i \cdot \nabla| \sqrt{\alpha}  \tilde{b}_k  \cdot \nabla \ln F|^2    \;dz .
 \end{align*} 
 We now look at $\tilde I_1$. Using once more the fact that $\textrm{div}(\sqrt{\alpha} \tilde{b}_k)=0$, we obtain
 \begin{align*} 
\tilde I_1 =& \; \frac{1}{2} \sum_{k=1}^3 \int \sqrt{\alpha}   \tilde{b}_k \cdot \nabla( \sqrt{\alpha} \tilde{b}_k \cdot \nabla F) |\tilde{e}_i \cdot   \nabla \ln F|^2  \;dz \\
=& - \frac{1}{2} \sum_{k=1}^3 \int  ( \sqrt{\alpha} \tilde{b}_k \cdot \nabla F) \sqrt{\alpha} \tilde{b}_k \cdot \nabla (  |\tilde{e}_i \cdot   \nabla \ln F|^2) \;dz \\
=& - \sum_{k=1}^3 \int   ( \sqrt{\alpha} \tilde{b}_k \cdot \nabla F) (\tilde{e}_i \cdot   \nabla \ln F)  \sqrt{\alpha} \tilde{b}_k \cdot \nabla (  \tilde{e}_i \cdot   \nabla \ln F) \;dz  \\
=& - \sum_{k=1}^3 \int   (\sqrt{\alpha}  \tilde{b}_k \cdot \nabla \ln F) (\tilde{e}_i \cdot   \nabla F)  \tilde{e}_i  \cdot \nabla ( \sqrt{\alpha}  \tilde{b}_k  \cdot   \nabla \ln F) \;dz \\
=& - \frac{1}{2} \sum_{k=1}^3 \int  ( \tilde{e}_i  \cdot \nabla F) \tilde{e}_i \cdot \nabla| \sqrt{\alpha}  \tilde{b}_k  \cdot \nabla \ln F|^2 \;dz,
\end{align*}
which is similar to the second term in $\tilde I_2$.  Hence we have that 
\begin{align*}
\tilde I_1+\tilde I_2 = & -  \sum_{k=1}^3  \int    F  (\tilde{e}_i \cdot \nabla ( \sqrt{\alpha} \tilde{b}_k  \cdot \nabla \ln F))^2 \;dz \\
 &-  \sum_{k=1}^3   \int     (\tilde{e}_i \cdot \nabla F) \tilde{e}_i \cdot \nabla| \sqrt{\alpha} \tilde{b}_k  \cdot \nabla \ln F|^2    \;dz .
\end{align*}
The second and third term of $\tilde I_1+\tilde I_2$ cancel with $\tilde I_3$. Summarizing 
$$
\tilde I_1+\tilde I_2+\tilde I_3 =  -  \sum_{k=1}^3  \int   F  (\tilde{e}_i \cdot \nabla ( \sqrt{\alpha} \tilde{b}_k  \cdot \nabla \ln F))^2  \;dz.
$$
Similar computations hold for the terms with $\hat{e}_i$. This concludes the proof. 

\end{proof}

\begin{lemma}\label{Lem:Fisher_Landau_v}
Let $f$ be as in Lemma \ref{Lem:Fisher_Landau}. We have 
\begin{align*}
\frac{d}{dt} \int   \frac{|\nabla_v f|^2 }{f} \;dvdx   =&  -  \frac{1}{2} \sum_{k=1, i=1}^3  \int    F \left(  |\tilde \xi_i \cdot  \nabla (  \sqrt{\alpha} \tilde{b}_k  \cdot \nabla \ln F)|^2  +   |\hat \xi_i \cdot  \nabla ( \sqrt{\alpha}  \tilde{b}_k  \cdot \nabla \ln F)|^2 \right)\;dz \\
&+ \frac{\gamma^2}{2}\sum_{k=1}^3 \int  \frac{1}{|v-w|^2}\;F( \sqrt{\alpha}  \tilde{b}_k  \cdot \nabla \ln F)^2 \;dz -8  \int \nabla_v \sqrt{f}  \cdot  \nabla_x \sqrt{f} \;dxdv.
\end{align*}
\end{lemma}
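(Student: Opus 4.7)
The plan is to mirror the computation of Lemma~\ref{Lem:Fisher_Landau}, starting from identity~(\ref{Equiv_v}) and processing the four quadratic expressions (with $V\in\{\tilde{\xi}_i,\hat{\xi}_i\}$) by the same algebraic rearrangement used there. Three features distinguish the velocity case from the spatial one: first, (\ref{Equiv_v}) carries the extra transport cross-term $-8\int (e_i\cdot\nabla_v\sqrt{f})(e_i\cdot\nabla_x\sqrt{f})\,dx\,dv$, which passes through unchanged and, after summing in $i$, becomes the last term in the statement; second, the commutator $[\tilde{\xi}_i,\sqrt{\alpha}\tilde{b}_k]$ still vanishes by Lemma~\ref{lem_commutators}, so the $\tilde{\xi}_i$ piece is handled by repeating the proof of Lemma~\ref{Lem:Fisher_Landau} verbatim and yields the first half of the dissipation term; third, the commutator $[\hat{\xi}_i,\sqrt{\alpha}\tilde{b}_k]$ is nonzero and is precisely the mechanism generating the $\gamma^2/|v-w|^2$ correction.

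Concretely, for each $V$ I would rewrite
\begin{align*}
\frac{1}{2}\int \frac{V\cdot\nabla F}{F}(V\cdot\nabla Q(F))\,dz
-\frac{1}{4}\int\frac{(V\cdot\nabla F)^2}{F^2}Q(F)\,dz
= \frac{1}{4}\int Q(F)(V\cdot\nabla\ln F)^2\,dz
+\frac{1}{2}\int F(V\cdot\nabla\ln F)\,V\cdot\nabla\tfrac{Q(F)}{F}\,dz,
\end{align*}
then substitute $Q(F)/F=Q(\ln F)+\sum_k|\sqrt{\alpha}\tilde{b}_k\cdot\nabla\ln F|^2$, producing three pieces $\widetilde{I}_1,\widetilde{I}_2,\widetilde{I}_3$ as in Lemma~\ref{Lem:Fisher_Landau}. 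For $V=\hat{\xi}_i$, every time I try to swap $\hat{\xi}_i$ past $\sqrt{\alpha}\tilde{b}_k$ (which happens twice inside $\widetilde{I}_2$ and once inside $\widetilde{I}_1$) I pick up
\begin{align*}
c_{i,k} := [\hat{\xi}_i,\sqrt{\alpha}\tilde{b}_k]
= \sqrt{\alpha}[\hat{\xi}_i,\tilde{b}_k] + \gamma\,(v_i-w_i)\tfrac{\sqrt{\alpha}}{|v-w|^2}\tilde{b}_k.
\end{align*}
The idea is to group the non-commutator parts together, so that they assemble the dissipation $-\tfrac{1}{2}\sum_{i,k}\int F(\hat{\xi}_i\cdot\nabla(\sqrt{\alpha}\tilde{b}_k\cdot\nabla\ln F))^2\,dz$ through the same sequence of integrations by parts used in Lemma~\ref{Lem:Fisher_Landau}, and to handle the commutator remainder separately.

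The hard part is the bookkeeping of these commutator remainders. By Lemma~\ref{lem_commutators}, the first summand $\sqrt{\alpha}[\hat{\xi}_i,\tilde{b}_k]$ is, up to sign, of the form $\pm 2\sqrt{\alpha}\hat{\xi}_j$ obeying the antisymmetry $[\hat{\xi}_i,\tilde{b}_k]=-[\hat{\xi}_k,\tilde{b}_i]$; this antisymmetry should force the corresponding cross terms to cancel in pairs after the double summation $\sum_{i,k}$ implicit in $Q(F)$. What survives is the second, $\gamma$-proportional summand: squaring and summing over $i$ gives $\sum_i|\gamma(v_i-w_i)\sqrt{\alpha}/|v-w|^2|^2=\gamma^2\alpha/|v-w|^2$, which when multiplied by $F(\tilde{b}_k\cdot\nabla\ln F)^2$ and summed over $k$ produces exactly the $+\tfrac{\gamma^2}{2}\sum_k\int F(\sqrt{\alpha}\tilde{b}_k\cdot\nabla\ln F)^2/|v-w|^2\,dz$ in the statement. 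The most delicate point, and the one most prone to a sign error, is verifying that the $\sqrt{\alpha}[\hat{\xi}_i,\tilde{b}_k]$ contributions really do cancel after all the integrations by parts, rather than surviving as a spurious term of order $\alpha$ (which, since the statement contains no such term, would signal a miscount).
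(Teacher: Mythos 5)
Your approach coincides with what the paper calls the ``alternative'' proof: rerun the computation of Lemma~\ref{Lem:Fisher_Landau} with $\tilde e_i,\hat e_i$ replaced by $\tilde\xi_i,\hat\xi_i$, carry along the transport cross-term from~\eqref{Equiv_v}, note that $[\tilde\xi_i,\sqrt{\alpha}\tilde b_k]=0$ makes the $\tilde\xi_i$ block go through unchanged, and track the $\hat\xi_i$ commutators. That much is correct, and your count of where the swaps occur (twice inside $\widetilde I_2$, once inside $\widetilde I_1$) is accurate. The paper itself only gives a citation to \cite{GS24} plus this one-line hint, so in spirit you are following the paper.

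However, there is a genuine gap in the part you yourself flag as delicate, and it is not merely a bookkeeping risk: the \emph{mechanism} you propose is not the one the structure of the computation produces. Each swap $\hat\xi_i\cdot\nabla(\sqrt\alpha\tilde b_k\cdot\nabla\cdot)\rightleftharpoons\sqrt\alpha\tilde b_k\cdot\nabla(\hat\xi_i\cdot\nabla\cdot)$ in the Lemma~\ref{Lem:Fisher_Landau} argument produces a correction that is \emph{linear} in $[\hat\xi_i,\sqrt\alpha\tilde b_k]$, so a naive repetition yields three integrals each containing a single factor of $[\hat\xi_i,\sqrt\alpha\tilde b_k]$ (hence a priori terms of order $\gamma^1$), not a squared commutator. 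Your claim that ``squaring and summing over $i$'' of the $\gamma$-proportional piece yields the $\gamma^2\alpha/|v-w|^2$ contribution therefore does not follow from the computation as you set it up; to get a quadratic term one must perform further integrations by parts (using, e.g., $R\cdot\nabla\sqrt\alpha=\gamma\sqrt\alpha$ for the radial vector $R=\sum_i(v_i-w_i)\hat\xi_i$ and the fact that $R$ commutes with $\tilde b_k$), and simultaneously show that the $\gamma^1$ and $\alpha$-order pieces drop out. Likewise, the claimed cancellation of the $\sqrt\alpha[\hat\xi_i,\tilde b_k]$ parts ``in pairs by antisymmetry'' is only a conjecture: under $i\leftrightarrow k$ the integrands also change $P_k\mapsto P_i$ and $\hat\xi_i\mapsto\hat\xi_k$, so the correction terms are not manifestly antisymmetric in $(i,k)$. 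In short, the overall route is right and your identification of the pieces (transport term, vanishing $\tilde\xi_i$-commutators, nonvanishing $\hat\xi_i$-commutators) is correct, but the step that actually produces $\frac{\gamma^2}{2}\sum_k\int F(\sqrt\alpha\tilde b_k\cdot\nabla\ln F)^2/|v-w|^2\,dz$ is asserted rather than derived, and the proposed ``cancel by antisymmetry / survive by squaring'' dichotomy does not accurately describe how the algebra actually works.
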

\begin{proof}
The proof can be found in \cite{GS24}, with the exception on the last term that arises from the transport.  Alternatively, one could use the computations in Lemma \ref{Lem:Fisher_Landau} with $\tilde e_i$ and $\hat e_i$ replaced by $\tilde \xi_i$ and $\hat \xi_i$ respectively. In such case there will be terms with commutators $[ \hat \xi_i, \sqrt{\alpha} \tilde b_k]$. These commutator terms will simplify to zero if $\gamma =0$. Otherwise they will produce the second term. 
\end{proof}

\begin{proof}{\em(Proof of Theorem \ref{Theo_Fisher_xv})}

The previous two lemmas show that 
\begin{align*}
\frac{d}{dt} \int   \frac{|\nabla_v f|^2 }{f}+ \frac{|\nabla_x f|^2 }{f}  \;dvdx 
\end{align*}
equals
\begin{align*} & -  \frac{1}{2} \sum_{k=1, i=1}^3  \int    F \left(  |\tilde \xi_i \cdot  \nabla (  \sqrt{\alpha} \tilde{b}_k  \cdot \nabla \ln F)|^2  +   |\hat \xi_i \cdot  \nabla ( \sqrt{\alpha}  \tilde{b}_k  \cdot \nabla \ln F)|^2 \right)\;dz \\
&+ \frac{\gamma^2}{2}\sum_{k=1}^3 \int  \frac{1}{|v-w|^2}\;F( \sqrt{\alpha}  \tilde{b}_k  \cdot \nabla \ln F)^2 \;dz -8  \int \nabla_v \sqrt{f}  \cdot  \nabla_x \sqrt{f} \;dxdv \\
&-  \frac{1}{2} \sum_{k=1, i=1}^3  \int    F \left(  |\tilde e_i \cdot  \nabla (  \sqrt{\alpha} \tilde{b}_k  \cdot \nabla \ln F)|^2  +   |\hat e_i \cdot  \nabla ( \sqrt{\alpha}  \tilde{b}_k  \cdot \nabla \ln F)|^2 \right)\;dz.
\end{align*}
As long as $| \gamma| \le \sqrt{22}$ the integral inequality obtained in \cite{GS24} (see also Theorem 1.1 in \cite{Ji24a} and  Proposition 2.3 \cite{Ji24})  guarantees that 
\begin{align*} 
\sum_{k=1}^3 \;   & \frac{\gamma^2}{2}\int  \frac{1}{|v-w|^2}\;F( \sqrt{\alpha}  \tilde{b}_k  \cdot \nabla \ln F)^2 \;dz \\
  &  \le  \sum_{i=1}^3 \frac{1}{2} \int    F \left(  |\tilde \xi_i \cdot  \nabla (  \sqrt{\alpha} \tilde{b}_k  \cdot \nabla \ln F)|^2  +   |\hat \xi_i \cdot  \nabla ( \sqrt{\alpha}  \tilde{b}_k  \cdot \nabla \ln F)|^2 \right)\;dz.
\end{align*}
Therefore 
\begin{align*}
\frac{d}{dt} \int   \frac{|\nabla_v f|^2 }{f} + \frac{|\nabla_x f|^2 }{f} \;dvdx \le -8  \int \nabla_v \sqrt{f}  \cdot  \nabla_x \sqrt{f} \;dxdv. 
\end{align*}
Young's  and  Gronwall's inequalities conclude the proof. 
\end{proof}

\section{Variational formulation - the GENERIC formalism}
The GENERIC formalism was introduced by \cite{GO97} and stands for General Equation for the nonequilibrium Reversible-Irreversible Coupling. It was originally used to describe the dynamics of complex fluids. This formalism provides a unified framework for understanding how certain systems evolve towards equilibrium, considering both reversible and irreversible processes. The reverse contribution is provided by an energy conservation, the irreversible by the production of an entropy. 

In the contest of kinetic equations, Erbar and He for the first time adapted the GENERIC formalism to the fuzzy Boltzmann  in \cite{Erbar24, Erbar25}. Recently, Duong and He in \cite{Duong_Li_1} showed that also the fuzzy Landau equation (\ref{1}) can be written in the GENERIC formalism. 

Here we present this structure in a slightly simpler and more formal way. 


An evolution equation is represented in the GENERIC framework as follows:
\begin{align}\label{GENERIC}
\pa_t f = L(f)\dd E(f) + M(f)\dd S(f)
\end{align}
where $E, S : Z\to\R$ are suitable functionals defined in the state space $Z$ representing {\em energy} and {\em entropy} (respectively), while for every given function $f\in Z$, $L(f)$, $M(f)$ are operators mapping cotangent vectors to tangent ones with the following properties:
\begin{enumerate}
\item $L=L(f)$ is skew-symmetric and satisfies the Jacobi identity:
\begin{align*}
\{\{G_1,G_2\}_L,G_3\}_L + \{\{G_2,G_3\}_L,G_1\}_L + \{\{G_3,G_1\}_L,G_2\}_L = 0
\end{align*}
for every choice of $G_1$, $G_2$, $G_3 : Z\to\R$, where the Poisson brackets $\{\cdot,\cdot\}_L$ are defined as
$$\{F,G\}_L = \langle\dd F, L\dd G\rangle \qquad \forall F,G: Z \to \R. $$
\item $M=M(f)$ is symmetric and positive semi-definite.
\item The {\em degeneracy conditions} are satisfied:
$$
L(f)\dd S(f) = 0,\qquad M(f)\dd E(f) = 0.
$$
\end{enumerate}
As a consequence of the degeneracy condition, the energy $E$ is conserved while the entropy $S$ is non-decreasing along the solutions of \eqref{GENERIC} (here $\langle \cdot,\cdot\rangle$ is the duality product between cotangent and tangent vectors, in this order):
\begin{align*}
\frac{d}{dt}E(f(t)) &= \langle \dd E(f(t)), \pa_t f\rangle \\
&= \langle \dd E(f(t)), L(f)\dd E(f)\rangle + \langle \dd E(f(t)), M(f)\dd S(f)\rangle \\
&= \langle \dd E(f(t)), L(f)\dd E(f)\rangle + \langle \dd S(f(t)), M(f)\dd E(f)\rangle \\
&= 0,\\
\frac{d}{dt}S(f(t)) &= \langle \dd S(f(t)), \pa_t f\rangle \\
&= \langle \dd S(f(t)), L(f)\dd E(f)\rangle + \langle \dd S(f(t)), M(f)\dd S(f)\rangle\\
&= -\langle \dd E(f(t)), L(f)\dd S(f)\rangle + \langle \dd S(f(t)), M(f)\dd S(f)\rangle\\
&= \langle \dd S(f(t)), M(f)\dd S(f)\rangle\\
&\geq 0.
\end{align*}
The terms $L\dd E$, $M\dd S$ describe the Hamiltonian and dissipative part of the dynamics, respectively.

We formally define the Riemannian metric induced by $M$ on the tangent and cotangent space as follows
\begin{align*}
(x,y)_{M^{-1}} = x\cdot M^{-1}y = \langle M^{-1}y , x\rangle,\quad 
(v,w)_M = v\cdot Mw = \langle v, M w\rangle.
\end{align*}
For a generic final time $T>0$, consider the functional:
\begin{align}\label{J}
J(f) = S(f_0)-S(f(T))+\frac{1}{2}\int_0^T\left(
\|\pa_t f - L(f)\dd E(f)\|_{M^{-1}}^2 + \|\dd S(f)\|_M^2
\right)dt.
\end{align}
Let us check that $J$ is nonnegative along every curve $f : [0,T]\to Z$ and $J(f)=0$ if and only if $f$ is a solution to \eqref{GENERIC}:
\begin{align*}
\frac{1}{2}\| &\pa_t f - L(f)\dd E(f) - M(f)\dd S(f)\|_{M^{-1}}^2 \\
&=\frac{1}{2}\|\pa_t f - L(f)\dd E(f)\|_{M^{-1}}^2 + \frac{1}{2}\|M(f)\dd S(f)\|_{M^{-1}}^2
-(\pa_t f - L(f)\dd E(f), M(f)\dd S(f))_{M^{-1}}\\
&=\frac{1}{2}\|\pa_t f - L(f)\dd E(f)\|_{M^{-1}}^2 + \frac{1}{2}\|\dd S(f)\|_{M}^2
-\langle \dd S(f),\pa_t f\rangle + \langle \dd S(f), L(f)\dd E(f)\rangle\\
&=\frac{1}{2}\|\pa_t f - L(f)\dd E(f)\|_{M^{-1}}^2 + \frac{1}{2}\|\dd S(f)\|_{M}^2
-\frac{d}{dt}S(f) - \langle \dd E(f), L(f)\dd S(f)\rangle,
\end{align*}
where in the last step we used the fact that $L(f)$ is skew-symmetric. Integrating the above identity in time and employing the degeneracy condition leads to the claim.

We write now \eqref{1} in the form \eqref{GENERIC}. The state space $Z$ will be a suitable subset of the set of function $R^{2d}\to\R$ depending of $(x,v)$.
The energy and entropy functionals are given by
\begin{align}\label{ES}
E(f) = \frac{1}{2}\int_{\R^6}|v|^2 f(x,v)\dd x\dd v,\qquad 
S(f) = -\int_{\R^6}f(x,v)\log f(x,v)\dd x\dd v.
\end{align}
In the following $f,g\in Z$ are generic state vectors. Also we define for brevity:
\begin{align*}
\mu = (x,v),\quad \mu^*=(x^*,v^*), \quad 
B(\mu-\mu^*) = \kappa(x-x^*)N(v-v^*).
\end{align*}
Now we define the operators $L$, $M$.
\begin{align*}
L(f)g &= \nabla_v \cdot (f\nabla_x g) - \nabla_x \cdot (f\nabla_v g),\\
M(f)g &= \frac{1}{2}\overline{\nabla}^\top (B(\mu-\mu^*)ff^*\overline{\nabla}g),\\
\overline{\nabla}g(\mu,\mu^*) &= \nabla_v g(\mu) - \nabla_{v^*}g(\mu^*),\\
\overline{\nabla}^\top &= \mbox{adjoint of $\overline{\nabla}$}.
\end{align*}
Let us compute $\overline{\nabla}^\top$. Notice that $\overline{\nabla}$ maps functions of $\mu$, $\R^6\to\R$, into functions of $(\mu,\mu^*)$, $\R^{12}\to\R$. Let $g = g(\mu)$, $f=f(\mu,\mu^*)$ arbitrary. Consider
\begin{align*}
\int_{\R^{12}}f\overline{\na}g \dd \mu\dd \mu^* &=
\int_{\R^{12}}f(\mu,\mu^*)\nabla_v g(\mu) \dd \mu\dd \mu^*
-\int_{\R^{12}}f(\mu,\mu^*)\nabla_{v^*} g(\mu^*) \dd \mu\dd \mu^*\\
&=
-\int_{\R^{12}}\nabla_v f(\mu,\mu^*) g(\mu) \dd \mu\dd \mu^*
+\int_{\R^{12}}\nabla_{v^*} f(\mu,\mu^*)g(\mu^*) \dd \mu\dd \mu^*\\
&=
-\int_{\R^{12}}\nabla_v f(\mu,\mu^*) g(\mu) \dd \mu\dd \mu^*
+\int_{\R^{12}}\nabla_{v} f(\mu^*,\mu)g(\mu) \dd \mu\dd \mu^*\\
&=
\int_{\R^6}g(\mu)\nabla_v\int_{\R^6}
(f(\mu,\mu^*) + f(\mu^*,\mu))\dd\mu^* ~ \dd\mu .
\end{align*}
We obtain
\begin{align*}
\overline{\nabla}^\top f (\mu) = \nabla_v\cdot\int_{\R^6}(-f(\mu,\mu^*) + f(\mu^*,\mu))\dd\mu^*.
\end{align*}
Let us verify that the given structure really yields \eqref{1}.
Given that
\begin{align*}
\langle\dd E(f), g\rangle = \frac{1}{2}\int_{\R^6}|v|^2 g(x,v)\dd x\dd v,\qquad 
\langle\dd S(f), g\rangle = -\int_{\R^6}(\log(f)+1)g \dd x\dd v,
\end{align*}
it holds
\begin{align*}
L(f)\dd E(f) = \nabla_v \cdot (f\nabla_x \frac{|v|^2}{2}) - \nabla_x \cdot (f\nabla_v \frac{|v|^2}{2}) = -v\cdot\nabla_x f,
\end{align*}
as well as
\begin{align*}
M(f)\dd S(f) = -\frac{1}{2}\nabla_{v}\cdot
\int_{\R^6}(\psi(\mu,\mu^*)-\psi(\mu^*,\mu))\dd\mu^*,\\
\psi(\mu,\mu^*) = B(\mu-\mu^*)f(\mu)f(\mu^*)\overline{\nabla}g(\mu,\mu^*),\\
g(\mu) = -(\log(f(\mu))+1).
\end{align*}
Given that $\psi(\mu,\mu^*)=-\psi(\mu^*,\mu)$, it follows
\begin{align*}
M(f)\dd S(f) &= -\nabla_{v}\cdot\int_{\R^6}\psi(\mu,\mu^*)\dd \mu^*\\
&=\int_{\R^6}B(\mu-\mu^*)f(\mu)f(\mu^*)(\overline{\nabla}\log(f))(\mu,\mu^*)
\dd\mu^*\\
&= \Div_v\int_{\R^3}\int_{\R^3}N(v-v^*)\kappa(x-x^*)(f^*\nabla f - f\nabla f^*)\dd x^*\dd v^* ,
\end{align*}
as desired.

\bibliography{landaurefs_proposal}

\def\cprime{$'$}
\begin{thebibliography}{10}

\bibitem{Arkeryd90}
L.~Arkeryd.
\newblock Global existence in ${L}^1$ for the {E}nskog equation and convergence
  of the solutions to solutions of the {B}oltzmann equation.
\newblock {\em J. Statist. Phys. 59 (1990), no. 3-4, 845-867.}

\bibitem{Arkeryd86}
L.~Arkeryd.
\newblock On the {E}nskog equation in two space variables.
\newblock {\em Transport Theory Statist. Phys. 15 (1986), no. 5, 673-691.}

\bibitem{CGZ20}
L.~Caffarelli, M.~Gualdani, and N.~Zamponi.
\newblock Existence of weak solutions to a continuity equation with space time
  nonlocal {D}arcy law.
\newblock {\em Comm. Partial Differential Equations 45 (2020), no. 12,
  1799-1819.}

\bibitem{CamSilSne2017}
S.~Cameron, L.~Silvestre, and S.~Snelson.
\newblock Global a priori estimates for the inhomogeneous {L}andau equation
  with moderately soft potentials.
\newblock {\em Ann. Inst. H. Poincare Anal. Non Lineaire}, (3):625--642, 2018.

\bibitem{Cer1983}
C.~Cercignani.
\newblock The grad limit for a system of soft spheres.
\newblock {\em Communications on pure and applied mathematics}, 36(4):479--494,
  1983.

\bibitem{CJL14}
X.~Chen, A.~Juengel, and J.G. Liu.
\newblock A note on {A}ubin-{L}ions-{D}ubinskii lemmas.
\newblock {\em Acta Appl. Math. 133 (2014), 33-43.}

\bibitem{DGL24}
L.~Desvillettes, W.~Golding, , M.~Gualdani, and A.~Loher.
\newblock Production of the {F}isher information for the {L}andau-{C}oulomb
  equation with ${L}^1$ initial data.
\newblock {\em https://arxiv.org/pdf/2410.10765}.

\bibitem{DLSS2020}
R.~Duan, S.~Liu, S.~Sakamoto, and R.~M. Strain.
\newblock Global mild solutions of the {L}andau and non‐cutoff {B}oltzmann
  equations.
\newblock {\em Communications on Pure and Applied Mathematics},
  74(5):932–1020, 2020.

\bibitem{Duong_Li_1}
M.H. Duong and Z.~He.
\newblock On a fuzzy {L}andau equation: {P}art {I}. a variational approach.
\newblock {\em https://arxiv.org/pdf/2504.07666}.

\bibitem{Duong_Li_2}
M.H. Duong and Z.~He.
\newblock On a fuzzy {L}andau equation: {P}art {II}. solvability results.
\newblock {\em https://https://arxiv.org/pdf/2507.10288}.

\bibitem{Erbar25}
M.~Erbar and Z.~He.
\newblock Passing to the limit in fuzzy {B}oltzmann equations.
\newblock {\em https://arxiv.org/pdf/2505.05838.}

\bibitem{Erbar24}
M.~Erbar and Z.~He.
\newblock A variational approach to the fuzzy {B}oltzmann equation.
\newblock {\em Nonlinearity 38 (2025), no. 5, Paper No. 055019, 36 pp.}

\bibitem{GoGuLo24}
W.~Golding, M.~Gualdani, and A.~Loher.
\newblock Global smooth solutions to the {L}andau-{C}oulomb equation in
  ${L}^{3/2}$.
\newblock {\em Arch. Ration. Mech. Anal. 249 (2025), no. 3, Paper No. 34.}

\bibitem{GO97}
M.~Grmela and H.C. \"Ottinger.
\newblock Dynamics and thermodynamics of complex fluids. {II}. illustration of
  a general formalism.
\newblock {\em Phys. Rev. E56 (1997), 6620-6632.}

\bibitem{GuiSil25}
N.~Guillen and L.~Silvestre.
\newblock The {L}andau equation and {F}isher information.
\newblock {\em arXiv preprint arXiv:2507.05167}, 2025.

\bibitem{GS24}
N.~Guillen and L.~Silvestre.
\newblock The {L}andau equation does not blow up.
\newblock {\em Acta Matematica}, 243 Number 2, 2025.

\bibitem{Guo02}
Y.~Guo.
\newblock The {L}andau equation in a periodic box.
\newblock {\em Communications in mathematical physics}, 231(3):391--434, 2002.

\bibitem{HJJ2025}
L.B He, J.~Ji, and Y.~Luo.
\newblock Existence, uniqueness and smoothing estimates for spatially
  homogeneous {L}andau-{C}oulomb equation in ${H}^{-1/2}$ space with polynomial
  tail, ar{X}iv.2412.07287.
\newblock 2024.

\bibitem{HendersonSnelson2017}
C.~Henderson and S.~Snelson.
\newblock ${C}^\infty$ smoothing for weak solutions of the inhomogeneous
  {L}andau equation.
\newblock {\em Archive for Rational Mechanics and Analysis 236(1) 113-143,
  2020.}

\bibitem{HendersonSnelsonTarfulea2017}
C.~Henderson, S.~Snelson, and A.~Tarfulea.
\newblock Local existence, lower mass bounds, and smoothing for the {L}andau
  equation.
\newblock {\em Journal of Differential Equations 266(2-3) 1536-1577, 2019}.

\bibitem{IS22}
C.~Imbert and L.~Silvestre.
\newblock Global regularity estimates for the {B}oltzmann equation without
  cut-off.
\newblock {\em J. Amer. Math. Soc. 35 (2022), no. 3, 625-703.}

\bibitem{Imb_Silv2017}
C.~Imbert and L.~Silvestre.
\newblock Weak {H}arnack inequality for the {B}oltzmann equation without
  cut-off.
\newblock {\em J. Eur. Math. Soc. (JEMS) 22 (2020), no. 2, 507-592.}, 2020.

\bibitem{ISV24}
C.~Imbert, L.~Silvestre, and C.~Villani.
\newblock On the monotonicity of the {F}isher information for the {B}oltzmann
  equation.
\newblock {\em Preprint https://arxiv.org/pdf/2409.01183}.

\bibitem{Ji24a}
S.~Ji.
\newblock Bounds for the optimal constant of the {B}akry-\'{E}mery {$\Gamma_2$}
  criterion inequality on $\mathbb{R}{P}^{d-1}$.
\newblock {\em ArXiv:2408.13954}.

\bibitem{Ji24}
S.~Ji.
\newblock Dissipation estimates of the {F}isher information for the {L}andau
  equation.
\newblock {\em Arxiv 2410.09035}.

\bibitem{KimGuoHwang2016}
J.~Kim, Y.~Guo, and H.J. Hwang.
\newblock A ${L}^2$ to ${L}^\infty$ approach for the {L}andau equation.
\newblock {\em Peking Math J. 3, 131-202}, 2020.

\bibitem{Lio1994}
PL~Lions.
\newblock On {B}oltzmann and {L}andau equations.
\newblock {\em Philosophical Transactions of the Royal Society of London.
  Series A: Physical and Engineering Sciences}, 346(1679):191--204, 1994.

\bibitem{MRRS22-1}
F.~Merle, P.~Raphael, I.~Rodnianski, and J.~Szeftel.
\newblock On the implosion of a compressible fluid i: {S}mooth self-similar
  inviscid profiles.
\newblock {\em Ann. of Math. (2)196(2022), no.2, 567-778.}

\bibitem{MRRS22}
F.~Merle, P.~Raphael, I.~Rodnianski, and J.~Szeftel.
\newblock On the implosion of a compressible fluid ii: {S}ingularity formation.
\newblock {\em Ann. of Math. (2)196(2022), no.2, 779-889.}

\bibitem{Morg55}
D.~Morgenstern.
\newblock Analytical studies related to the {M}axwell-{B}oltzmann equation.
\newblock {\em J. Rational Mech. Anal. 4 (1955), 533-555.}

\bibitem{Povz62}
A.~Povzner.
\newblock On the {B}oltzmann equation in the kinetic theory of gases.
\newblock {\em Mat. Sb. (N.S.) 58 (100) (1962), 65-86.}

\bibitem{Pov1965}
A.~Povzner.
\newblock On the {B}oltzmann equation in the kinetic theory of gases.
\newblock {\em Amer. Math. Soc. Transl. 47, Ser. 2 (1965), 193–214.}

\bibitem{SnelSol23}
S.~Snelson and C.~Solomon.
\newblock A continuation criterion for the {L}andau equation with very soft and
  {C}oulomb potentials.
\newblock {\em https://arxiv.org/pdf/2309.15690}.

\bibitem{Villani2025}
C.~Villani.
\newblock Fisher information in kinetic theory.
\newblock {\em arXiv preprint arXiv:2501.00925}, 2025.

\end{thebibliography}
\bibliographystyle{plain}

\end{document}